\def\today{\number\day\space\ifcase\month\or   January\or February\or
   March\or April\or May\or June\or   July\or August\or September\or
   October\or November\or December\fi\   \number\year}
\theoremstyle{definition}
\newtheorem{thm}{Theorem}[section]
\newtheorem{lem}[thm]{Lemma}
\newtheorem{prp}[thm]{Proposition}
\newtheorem{dfn}[thm]{Definition}
\newtheorem{cor}[thm]{Corollary}
\newtheorem{rmk}[thm]{Remark}
\newtheorem{pbm}[thm]{Problem}
\def\s{\sharp}
\def\na{\natural}
\newcommand{\UIN}[1]{\left|\!\left|\!\left|{#1}\right|\!\right|\!\right|}
\newcommand{\NORM}[1]{\left|\!\left|{#1}\right|\!\right|}
\def\u|{|\kern-0.1em|\kern-0.1em|}
\def\U|{\Big|\kern-0.1em\Big|\kern-0.1em\Big|}
\def\~{\hskip-2pt}
\def\<{\langle}
\def\>{\rangle}
\def\VV{\lower-0.1ex\hbox{$\ \begin{matrix}\vee\\[-2ex]\vee\end{matrix}\ $}}
\def\vv{\lower-0.2ex\hbox{$\ \begin{matrix}\wedge\\[-2ex]\wedge\end{matrix}\ $}}
\def\({\left(}
\def\){\right)}
\def\l{\lambda}
\def\u|{|\kern-0.1em|\kern-0.1em|}
\def\U|{\Big|\kern-0.1em\Big|\kern-0.1em\Big|}
\numberwithin{equation}{section}
\def\s{\sharp}
\def\l{\lambda}
\def\na{\natural}
\def\phi{\varphi}
\title[On the Ando-Hiai property]{On the Ando-Hiai property for spectral geometric means}
\author[Y. Seo]{Yuki Seo}
\address{Department of Mathematics Education, Osaka Kyoiku University, Asahigaoka, Kashiwara, Osaka582-8582, Japan}
\email[]{yukis@cc.osaka-kyoiku.ac.jp}
\author[S. Wada]{Shuhei Wada}
\address{Department of Information and Computer Engineering
National Institute of Technology, Kisarazu College
2-11-1 Kiyomidai-Higasi
Kisarazu Chiba 292-0041
Japan}
\email[]{wada@j.kisarazu.ac.jp}
\author[T. Yamazaki]{Takeaki Yamazaki}
\address{Department of Electrical, Electronic and Communications Engineering, Toyo University, Kawagoe-Shi, Saitama, 350-8585, Japan}
\email[]{t-yamazaki@toyo.jp}
\date{\today}
\keywords{Spectral geometric mean, Ando-Hiai property, unitaliry invariant norm,  {log-majorization}.}
\subjclass[2020]{Primary 47A63; secondary 47A64.}
\begin{document}

\maketitle

\begin{abstract}
In this paper, we consider a two-variable operator function that includes two weighted spectral geometric means, and show fundamental properties of the operator function. Moreover, it satisfies the Ando-Hiai type inequality under some restricted conditions. As an application, we show the log-majorization relations and norm inequalities for the spectral geometric means of positive definite matrices. 
\end{abstract}

\section{Introduction}

 Let $H$ be a complex Hilbert space, and $\mathcal{B}(H)$ be the $C^*$-algebra of all bounded linear operators  {on $H$}. An operator $T\in \mathcal{B}(H)$ is said to be positive if $\langle Tx,x\rangle \geq 0$ holds for all $x\in H$, and we write $T\in \mathcal{B}(H)_{+}$ or $T\geq 0$. If in addition $T$ is invertible, we write $T\in \mathcal{B}(H)_{++}$ or $T>0$. 
For self-adjoint $A,B\in \mathcal{B}(H)$, if $A-B\in \mathcal{B}(H)_{+}$, then we write $A\geq B$. This is known as the L\"{o}wner partial order. \par 

For $A\in \mathcal{B}(H)_{++}$ and $B\in \mathcal{B}(H)_{+}$, the (weighted) geometric mean is defined by
\[
A \#_{t} B = A^{\frac{1}{2}}(A^{-\frac{1}{2}}BA^{-\frac{1}{2}})^t A^{\frac{1}{2}}, \qquad t\in [0,1],
\]
see \cite{KA}. In the case of $t=\frac{1}{2}$, we denote $A \#_{\frac{1}{2}} B$ by $A \# B$ for simplicity. The geometric mean $A \#_{t} B$ satisfies many desiblack properties as a geometric mean such as (i) Transposition: $A \#_t B = B \#_{1-t} A$. (ii) Consistency with scalars: If $A$ and $B$ commute, then $A \#_t B = A^{1-t}B^t$. (iii) Homogeneity: $(\l A) \#_t (\l B) =\l (A \#_t B)$ for $\l>0$. (iv) Unitary invariance: $(U^*AU) \#_t (U^*BU) = U^*(A \#_t B)U$ for any unitary $U$. (v) Self-duality: $(A \#_t B)^{-1} = A^{-1} \#_t B^{-1}$. (vi) Arithmetic-Geometric-Harmonic mean inequality: $
\left((1-t)A^{-1}+tB^{-1}\right)^{-1} \leq A \#_t B \leq (1-t)A+tB$, see also \cite[Lemma 3.2]{FMPS2}.
\par
On the other hand, for $A\in \mathcal{B}(H)_{++}$ and $B\in \mathcal{B}(H)_{+}$, Fiedler-Pet\'{a}k \cite{FP}  introduced
\[
A \natural B := (A^{-1}\#B)^{\frac{1}{2}}A(A^{-1}\#B)^{\frac{1}{2}},
\]
and they showed that for positive definite matrices $A$ and $B$, $(A \natural B)^2$ is similar to $AB$ and thus the eigenvalues of $A \na B$ are the positive square roots of the eigenvalues of $AB$. 
 {Hence $A\natural B$ is called the {\it spectral geometric mean}.}
As a generalization of this operation, the following family was defined in \cite{Lee and Lim}:
\[
A \natural_t B := (A^{-1}\#B)^t A(A^{-1}\#B)^t, \qquad t\in[0,1].
\]
Later, in \cite{Dinh-Tam-Vuong}, the following another extension was proposed:
\[
A \widetilde{\natural}_{k } B := (A^{-1}\#_k B)^{\frac{1}{2}}\,A^{2(1-k)}\,(A^{-1}\#_k B)^{\frac{1}{2}}, \qquad k\in[0,1].
\]
Obviously, $A\natural_{\frac{1}{2}}B=A\widetilde{\natural_{\frac{1}{2}}}B=A\natural B$  {holds}, and they are called the {\it weighted spectral geometric means}.
These operations satisfy several properties that are naturally expected from (weighted) geometric means.

 In \cite{AH1994}, Ando and Hiai proposed a log-majorization relation, whose essential part is the following operator inequality (AH). We say it the Ando-Hiai inequality: Let $A, B\in \mathcal{B}(H)_{++}$ and $t\in [0,1]$.
 Then
\begin{equation} \label{eq:AH}
A \#_t B \leq I \quad \mbox{implies} \quad A^{ {p}} \#_t B^{ {p}} \leq I \quad \mbox{for all ${ {p}}\geq 1$}. \tag{AH}
\end{equation} 
It is known in \cite{FK2006} that the Ando-Hiai inequality (AH) is equivalent to the Furuta inequality  {\cite{FI}}. Moreover, it follows from (AH) and the Lie-Trotter formula for $\#_t$ that 
\begin{equation} \label{eq:se}
\NORM{(A^p \#_t B^p)^{\frac{1}{p}}} \ \nearrow \ \NORM{\exp((1-t)\log A+t \log B)} \quad \mbox{as $p \downarrow 0$}
\end{equation}
for the operator norm $\NORM{\cdot}$, see \cite{AH1994}.\par
Motivated by the Ando-Hiai inequality (AH) for the weighted geometric mean, Gan-Tam \cite[Theorem 3.3]{GT} studied the Ando-Hiai type inequality for the weighted spectral geometric mean $\natural_{t}$. However, there is a gap in that proof, and it is still unclear whether the result obtained by Gan-Tam holds or not. Thus, in this paper, we would like to consider the Ando-Hiai type inequality for the weighted spectral geometric mean under a more general framework. In section 2, we define a two-variable operator function that includes the weighted spectral geometric means $A \natural_t B$ and $A \widetilde{\natural}_{k } B$ and show their fundamental properties. In section 3, we show  {an} Ando-Hiai type inequality for a two-variable operator function under some restricted conditions. As applications of the  {result}, we show Ando-Hiai  {inequalities} for the weighted spectral geometric means $A \natural_t B$ and $A \widetilde{\natural}_{k } B$ under restricted conditions. Especially, we show that $A\widetilde{\natural_{k}}B$ does not satisfy the Ando-Hiai type inequality for $k\in (\frac{1}{2},1)$.  {In section 4, we obtain a generalization of the Ando-Hiai inequality which is consideblack two positive parameters on the exponents.}
In sections  {5 and 6}, we shall deal with matrices, and we show the log-majorization relations and norm inequalities for the spectral geometric means of positive definite matrices. Finally, we show an eigenvalues inequality of the alternative means, and as a corollary we have a weak log-majorization relation of the alternative means.

\section{Two-Variable Operator Functions and Binary Operations
}

To discuss the Ando-Hiai type inequalities for the weighted spectral geometric means, we would like to consider a slightly more general framework that includes the spectral geometric means.\par

In this section, first of all, we define the following  {operator function with three parameters} that includes the spectral geometric means $\natural_t$ and $\widetilde{\natural}_{k}$: 
For $A\in \mathcal{B}(H)_{++}$, $B\in \mathcal{B}(H)_{+}$, real parameters $k ,t \in (0,1)$ and $L >0$, define
\[
F_{k ,t ,L }(A,B):=
(A^{-1}\#_k  B)^{t }\,A^{L }\,(A^{-1}\#_k  B)^{t }.
\]
It follows that $F_{k ,t ,L }$ exhibits 
geometric-mean-like behavior for certain choices of $k ,t ,L $, that is, this function coincides with the binary operation $\natural_t$ when $(k,t,L)=(\tfrac{1}{2},t,1)$, 
and with the binary operation $\widetilde{\natural}_k$ when $(k,t,L)=(k,\tfrac{1}{2},2(1-k))$.\par

By a straightforward computation, one obtains that the relation
$$
-2t + 4kt + L = 1
$$
is equivalent to the condition that $F_{k,t,L}$ possesses joint homogeneity, namely,
\[
\lambda\, F_{k,t,L}(A,B) = F_{k,t,L}(\lambda A,\lambda B), \qquad (\lambda>0).
\]
Consequently, when $F_{k,t,L}$ is jointly homogeneous, the condition $k=\tfrac{1}{2}$ yields $L=1$, while $t=\tfrac{1}{2}$ yields $L=2(1-k)$. Thus, we present the following  {operator function with two parameters} that includes the spectral geometric means:

 \begin{dfn}
Let $A\in \mathcal{B}(H)_{++}$ and $B\in \mathcal{B}(H)_{+}$. For real parameters $k ,t \in (0,1)$, define
\[
F_{k ,t}(A,B):=
(A^{-1}\#_k  B)^{t }\,A^{1+2t-4kt}\,(A^{-1}\#_k  B)^{t }.
\]
In particular,
\[
F_{\frac{1}{2},t}(A,B) = A \natural_{t} B\quad \mbox{and} \quad F_{k,\frac{1}{2}}(A,B) = A \widetilde{\natural}_{k } B.
\]
We call $F_{\frac{1}{2},t}(A,B) = A \natural_{t} B$ the \textit{$(\frac{1}{2},t)$-spectral geometric mean}, $F_{k,\frac{1}{2}}(A,B) = A \widetilde{\natural}_{k } B$ the \textit{$(k,\frac{1}{2})$-spectral geometric mean}, and $F_{k,t}(A,B)$ the {\it $(k,t)$-spectral geometric mean}.
\end{dfn}

By the Riccati equation, it follows that $F_{k,t}(A,B)$ is the unique positive invertible solution $X$ to
\[
A^{-L} \# X = (A^{-1} \#_k B)^t,
\]
where $L=1+2t-4kt$.


\par
We present the basic properties of the $(k,t)$-spectral geometric mean $F_{k,t}$:
\begin{prp} \label{prp-Fp}
Let $A,B\in \mathcal{B}(H)_{++}$ and $k,t \in (0,1)$. Let $L=1+2t-4kt$. Then
\begin{enumerate}
\item If $A$ and $B$ commute, then $F_{k,t}(A,B) = A^{1-2kt}B^{2kt}$.
\item $F_{k,t}(\l A, \mu B) = \l^{1-2kt} \mu^{2kt} F_{k,t}(A,B)$ for $\l, \mu >0$.
\item $U^* F_{k,t}(A,B)U = F_{k,t}(U^*AU, U^*BU)$ for any unitary $U\in \mathcal{B}(H)$.
\item  $F_{k,t}(A,B)^{-1}=F_{k,t}(A^{-1},B^{-1})$.
\item  $A^{-L}\# F_{k,t}(A,B)=F_{1-k,t}(B,A)^{-1}\# B^L = (A^{-1} \#_k B)^t$. 
\item If $G_{k,t}=A^{-L}\# F_{k,t}(A,B)$, then $F_{k,t}(A,B)=G_{k,t}A^LG_{k,t}$ and $F_{1-k,t}(B,A)=G_{k,t}^{-1}B^L G_{k,t}^{-1}$.
\item If $2((1-k)A^{-1}+kB)^{-t}-A^L$ is invertible, then
\[
2((1-k)A+kB^{-1})^{-t}-A^{-L}\leq F_{k,t}(A,B) \leq \left[ 2\left((1-k)A^{-1}+kB\right)^{-t}-A^L\right]^{-1}.
\]
\end{enumerate}
\end{prp}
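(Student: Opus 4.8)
The plan is to collapse the definition to a single abbreviation $M:=(A^{-1}\#_k B)^t$, so that $F_{k,t}(A,B)=MA^LM$ with $L=1+2t-4kt$, and then to read off the first four items from the properties (i)--(vi) of $\#_k$ recalled in the Introduction. For (1), consistency with scalars gives $A^{-1}\#_k B=A^{k-1}B^k$ when $A,B$ commute, and substituting $M=A^{t(k-1)}B^{tk}$ into $MA^LM$ and collecting exponents via $2t(k-1)+L=1-2kt$ yields $A^{1-2kt}B^{2kt}$. For (2) I would use homogeneity in the form $(\lambda A)^{-1}\#_k(\mu B)=\lambda^{k-1}\mu^k(A^{-1}\#_k B)$, an immediate consequence of the definition, together with the same exponent identity. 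Item (3) is immediate from unitary invariance of $\#_k$ and of the functional calculus. For (4) the point is self-duality of $\#_k$, which gives $A\#_k B^{-1}=(A^{-1}\#_k B)^{-1}$; hence $F_{k,t}(A^{-1},B^{-1})=(A\#_k B^{-1})^tA^{-L}(A\#_k B^{-1})^t=M^{-1}A^{-L}M^{-1}=F_{k,t}(A,B)^{-1}$.

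For (5) I would invoke the Riccati characterization: $C\#D$ is the unique positive invertible solution $X$ of $XC^{-1}X=D$. With $C=A^{-L}$ and $D=F_{k,t}(A,B)$ the equation reads $XA^LX=MA^LM$, solved by $X=M$, so $A^{-L}\#F_{k,t}(A,B)=M=(A^{-1}\#_k B)^t$. For the middle expression I would first note, by transposition and self-duality of $\#_k$, that $(B^{-1}\#_{1-k}A)^t=(A^{-1}\#_k B)^{-t}=M^{-1}$; the first identity applied to $F_{1-k,t}(B,A)$ then gives $B^{-L_1}\#F_{1-k,t}(B,A)=M^{-1}$, where $L_1=1+2t-4(1-k)t$ is the exponent attached to the parameter $1-k$, and taking inverses (self-duality of $\#$) together with the symmetry $X\#Y=Y\#X$ turns this into $F_{1-k,t}(B,A)^{-1}\#B^{L_1}=M$. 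Item (6) is then immediate, since (5) says exactly $G_{k,t}=M$: thus $F_{k,t}(A,B)=G_{k,t}A^LG_{k,t}$, and the computation just made reads $F_{1-k,t}(B,A)=M^{-1}B^{L_1}M^{-1}=G_{k,t}^{-1}B^{L_1}G_{k,t}^{-1}$.

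The substantive item is (7), which I would derive by combining two classical mean inequalities. Applying the arithmetic-geometric-harmonic inequality (vi) to $A^{-1}\#_k B$ gives $((1-k)A+kB^{-1})^{-1}\le A^{-1}\#_k B\le(1-k)A^{-1}+kB$, and raising to the power $t\in(0,1)$---legitimate by the L\"{o}wner-Heinz operator monotonicity of $x\mapsto x^t$---yields $((1-k)A+kB^{-1})^{-t}\le M\le((1-k)A^{-1}+kB)^t$. Since $M=A^{-L}\#F_{k,t}(A,B)$ by (5), the arithmetic-geometric mean inequality gives $M\le\tfrac12(A^{-L}+F_{k,t}(A,B))$, i.e. $F_{k,t}(A,B)\ge 2M-A^{-L}$, and inserting the lower bound for $M$ produces the lower estimate $F_{k,t}(A,B)\ge 2((1-k)A+kB^{-1})^{-t}-A^{-L}$. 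For the upper estimate I would apply this lower estimate to the pair $(A^{-1},B^{-1})$ and invoke self-duality (4), obtaining $F_{k,t}(A,B)^{-1}\ge 2((1-k)A^{-1}+kB)^{-t}-A^{L}$, and inverting both sides would give the claimed bound $F_{k,t}(A,B)\le\left[2((1-k)A^{-1}+kB)^{-t}-A^{L}\right]^{-1}$.

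I expect the genuine obstacle to lie in (7): one must keep the positions of $A^{L}$ and $A^{-L}$ and the two weighted arithmetic means straight, justify the passage to the $t$-th power by L\"{o}wner-Heinz rather than by bare monotonicity of multiplication, and---most delicately---carry out the final inversion. Inversion reverses the L\"{o}wner order only when the bracketed operator $2((1-k)A^{-1}+kB)^{-t}-A^{L}$ is positive definite, and this positivity (not mere invertibility) is what makes the upper bound meaningful; this is the essential role played by the hypothesis. A secondary point needing care is the exponent bookkeeping in (5)--(6), where the power of $B$ is the $L$-value belonging to $1-k$, namely $1+2t-4(1-k)t$, and not the $L$ belonging to $k$.
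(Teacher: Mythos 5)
Your proof is correct, and its toolkit is the same as the paper's: (1)--(4) by definition-chasing and self-duality of $\#_k$, (5)--(6) via the Riccati characterization together with transposition and self-duality, and (7) by the arithmetic--geometric--harmonic inequality, L\"{o}wner--Heinz, and duality. The structural differences are cosmetic: the paper verifies the middle identity of (5) by a direct square-root computation and obtains the first identity ``by the similar way,'' where you use Riccati uniqueness; and in (7) the paper first derives $F_{k,t}(A,B)^{-1}\geq 2((1-k)A^{-1}+kB)^{-t}-A^{L}$ and inverts to get the upper bound, recovering the lower bound by duality --- the mirror image of your order. What deserves emphasis is that on the two points where you diverge from the paper's text, you are right and the paper is not. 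First, with the paper's definition one has $F_{1-k,t}(B,A)=(B^{-1}\#_{1-k}A)^{t}\,B^{L'}\,(B^{-1}\#_{1-k}A)^{t}$ with $L'=1+2t-4(1-k)t=2-L$, so the identities in (5) and (6) hold with $B^{L'}$, exactly as you prove them; the printed versions with $B^{L}$ are false whenever $k\neq\frac{1}{2}$ (test commuting scalars via item (1): $F_{1-k,t}(B,A)=A^{2(1-k)t}B^{1-2(1-k)t}$, whereas $G_{k,t}^{-1}B^{L}G_{k,t}^{-1}=A^{2(1-k)t}B^{1+2t-6kt}$). The paper's own proof commits this slip in its first display, expanding $F_{1-k,t}(B^{-1},A^{-1})$ with middle factor $B^{-L}$ instead of $B^{-L'}$ --- with $B^{-L'}$ both your argument and the paper's square-root computation go through verbatim --- and the same $B^{L/2}$ reappears in the subsequent positive-similarity theorem, where $F_{1-k,t}(B,A)^{1/2}G_{k,t}=V_{2}B^{L/2}$ is asserted. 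Second, in (7) the paper merely says ``Since RHS is invertible by assumption, taking the inverse on both sides''; as you observe, order reversal under inversion requires $2((1-k)A^{-1}+kB)^{-t}-A^{L}$ to be positive definite, and invertibility alone does not suffice: a self-adjoint invertible operator lying below $F_{k,t}(A,B)^{-1}$ can still have negative spectrum, in which case the asserted upper bound fails outright (already for scalars). So your proposal should be read as proving the corrected statement, which is evidently the one the authors intend.
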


\begin{proof}
(1)--(3) easily follows from the definition of $F_{k,t}$.\\ 
For (4), since $(A\#B)^{-1}=A^{-1}\# B^{-1}$, we have (4).\\
For (5), it follows that
\begin{align*}
 & F_{1-k,t}(B,A)^{-1}\# B^L  = B^L \# F_{1-k,t}(B^{-1},A^{-1}) \quad \mbox{by (4)} \\
& = B^{\frac{L}{2}} \left( B^{-\frac{L}{2}} (B\#_{1-k} A^{-1})^t B^{-L} (B\#_{1-k} A^{-1})^t B^{-\frac{L}{2}} \right)^{\frac{1}{2}} B^{\frac{L}{2}}\\
& = B^{\frac{L}{2}} B^{-\frac{L}{2}} (B \#_{1-k} A^{-1})^t B^{-\frac{L}{2}} B^{\frac{L}{2}} \\
& = (A^{-1} \#_k B)^t \quad \mbox{by the transposition property of $\#_k$}.
\end{align*}
 {$A^{-L}\# F_{k,t}(A,B)=(A^{-1}\#_{k}B)^{t}$ can be shown by the similar way.}\\
For (6), it follows from the Riccati equation.\\
For (7), it follows from the arithmetic-geometric-harmonic mean inequality and $t\in (0,1)$ that
\begin{align*}
\left[ \frac{1}{2}\left( A^L+F_{k,t}(A,B)^{-1}\right) \right]^{-1} & \leq A^{-L} \# F_{k,t}(A,B) = (A^{-1} \#_k B)^t \quad \mbox{ {by (5)}}\\
& \leq \left( (1-k)A^{-1}+kB\right)^t 
\end{align*}
and hence
\begin{equation} \label{eq:fktab-1}
F_{k,t}(A,B)^{-1} \geq 2\left( (1-k)A^{-1}+kB\right)^{-t}-A^L. 
\end{equation}
Since RHS is invertible by assumption, taking the inverse on both sides, we have
\[
F_{k,t}(A,B) \leq \left[ 2\left( (1-k)A^{-1}+kB\right)^{-t}-A^L\right]^{-1}. 
\]
Since $F_{k,t}(A,B)$ is self-duality by (4), it follows from \eqref{eq:fktab-1} that
\[
F_{k,t}(A^{-1},B^{-1}) \geq 2\left( (1-k)A^{-1}+kB\right)^{-t}-A^L
\]
and replacing $A$ and $B$ by $A^{-1}$ and $B^{-1}$ respectively, we have
\[
F_{k,t}(A,B) \geq 2\left( (1-k)A+kB^{-1}\right)^{-t}-A^{-L}.
\]
\end{proof}

We shall say that $Z\in\mathcal{B}(H)$ is {\it positively similar} to $Y\in \mathcal{B}(H)$ if there exists $P\in \mathcal{B}(H)_{++}$ such that $Z=PYP^{-1}$. 
Gan-Tam \cite{GT} obtained a positively similarity relation between the geometric mean and $(\frac{1}{2},t)$-spectral geometric mean. We prove that the geometric mean and the $(k,t)$-spectral geometric mean are positively similar, which blackuced to the result of Gan-Tam \cite[Theorem 2.3]{GT} when $k=\frac{1}{2}$.\par

\begin{thm}
Let $A, B\in\mathcal{B}(H)_{++}$ and $k,t \in (0,1)$. Let $L=1+2t-4kt$. Then $A^L\# B^L$ is positively similar to 
\[
F_{1-k,t}(B,A)^{\frac{1}{2}}UF_{k,t}(A,B)^{\frac{1}{2}}
\]
for some unitary operator $U$.
\end{thm}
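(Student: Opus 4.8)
The plan is to realize the asserted positive similarity through the common positive factor $G:=G_{k,t}=(A^{-1}\#_k B)^t$ and to let the Riccati equations for the geometric mean do all the work. First I would invoke Proposition~\ref{prp-Fp}(6): with this $G$ one has $F_{k,t}(A,B)=GA^LG$ and $F_{1-k,t}(B,A)=G^{-1}B^LG^{-1}$. Hence, writing $P:=F_{k,t}(A,B)^{1/2}$ and $Q:=F_{1-k,t}(B,A)^{1/2}$, we have $P^2=GA^LG$ and $Q^2=G^{-1}B^LG^{-1}$, both positive and invertible, and the target operator is $QUP$.

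Next, put $M:=A^L\#B^L$ and conjugate it by $G$. Since $G>0$, the operator $M$ is positively similar to $T:=G^{-1}MG$, so it suffices to exhibit a unitary $U$ with $T=QUP$. The candidate is forced: I define $U:=Q^{-1}TP^{-1}=Q^{-1}G^{-1}(A^L\#B^L)GP^{-1}$, so that $QUP=T$ holds by construction and $M=GTG^{-1}=G\,(QUP)\,G^{-1}$ is the desired positive similarity with conjugator $G=(A^{-1}\#_k B)^t\in\mathcal{B}(H)_{++}$. The entire problem thus collapses to checking that this $U$ is unitary, i.e. that $U^*U=UU^*=I$.

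The heart of the argument is this unitarity check, and it is where the geometric mean enters decisively. Because $P,Q,G,M$ are all self-adjoint, the conditions $U^*U=I$ and $UU^*=I$ are equivalent, respectively, to $T^*Q^{-2}T=P^2$ and $TP^{-2}T^*=Q^2$. Substituting $T=G^{-1}MG$, $Q^{-2}=GB^{-L}G$, $P^{-2}=G^{-1}A^{-L}G^{-1}$ and the expressions for $P^2,Q^2$ above, the factors of $G$ and $G^{-1}$ telescope, and after cancellation the two requirements reduce to $MB^{-L}M=A^L$ and $MA^{-L}M=B^L$. But these are exactly the two Riccati equations for $M=A^L\#B^L$: the defining property of the geometric mean gives $M(A^L)^{-1}M=B^L$, while its symmetry $A^L\#B^L=B^L\#A^L$ gives $M(B^L)^{-1}M=A^L$. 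Hence $U$ is unitary, and the proof is complete. The argument is purely algebraic and valid in the operator setting, since $G,P,Q,M$ are all positive and invertible, so $U$ is a genuine bounded unitary.

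The only delicate point is the choice of conjugator: once one guesses $S=G$, every remaining step is a routine telescoping computation. Thus the main obstacle is recognizing that conjugation by the common factor $G=(A^{-1}\#_k B)^t$ is precisely what converts the two Riccati identities of $A^L\#B^L$ into the unitarity of $U$; no estimates, norm bounds, or majorization are needed, and the case $k=\frac12$ recovers the Gan--Tam relation \cite[Theorem~2.3]{GT} since there $L=1$.
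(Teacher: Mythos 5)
Your proof is correct as a derivation of the stated theorem from Proposition~\ref{prp-Fp}(6), and structurally it parallels the paper's argument: both proofs conjugate by the same positive operator $G=(A^{-1}\#_k B)^t$ and both rest on the identities $F_{k,t}(A,B)=GA^LG$ and $F_{1-k,t}(B,A)=G^{-1}B^LG^{-1}$. Where you genuinely diverge is in how the unitary is produced. The paper assembles it from explicit factorizations: the polar decompositions $F_{k,t}(A,B)^{1/2}G^{-1}=V_1A^{L/2}$ and $F_{1-k,t}(B,A)^{1/2}G=V_2B^{L/2}$, combined with the Fiedler--Pt\'ak factorization $A\# B=A^{1/2}WB^{1/2}$ of \eqref{eq:geometric mean with U}, so that the theorem's unitary is $V_2WV_1^*$ with $W$ attached to $B^L\# A^L$. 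You instead take the forced candidate $U=Q^{-1}G^{-1}(A^L\# B^L)GP^{-1}$ and verify unitarity directly: your telescoping reductions $T^*Q^{-2}T=G(MB^{-L}M)G=GA^LG=P^2$ and $TP^{-2}T^*=G^{-1}(MA^{-L}M)G^{-1}=G^{-1}B^LG^{-1}=Q^2$ are exactly the two Riccati identities for $M=A^L\# B^L$, valid for invertible positive operators on $H$. Your route buys self-containedness (no polar decomposition, no appeal to \cite{FP}); in effect you inline the proof of \eqref{eq:geometric mean with U}, whose standard derivation is precisely the observation that the Riccati equation makes $A^{-1/2}(A\# B)B^{-1/2}$ unitary. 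The paper's route buys a more explicit description of the unitary.

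One caveat, which applies equally to your proof and to the paper's own: the second identity of Proposition~\ref{prp-Fp}(6) appears to hold as stated only for $k=\frac12$. By the definition of the family, $F_{1-k,t}(B,A)=(B^{-1}\#_{1-k}A)^t\,B^{1+2t-4(1-k)t}\,(B^{-1}\#_{1-k}A)^t=G^{-1}B^{2-L}G^{-1}$, since $(B^{-1}\#_{1-k}A)^t=G^{-1}$ and $1+2t-4(1-k)t=1-2t+4kt=2-L$; a commuting test (take $A=aI$, $B=bI$ with, say, $k=\frac14$, $t=\frac12$) shows $G^{-1}B^LG^{-1}\neq F_{1-k,t}(B,A)$ when $k\neq\frac12$. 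With the corrected identity your telescoping goes through verbatim with $B^{2-L}$ in place of $B^L$: the Riccati identities for $M=A^L\# B^{2-L}$ give $MB^{-(2-L)}M=A^L$ and $MA^{-L}M=B^{2-L}$, and the conclusion becomes that $A^L\# B^{2-L}$ (rather than $A^L\# B^L$) is positively similar to $F_{1-k,t}(B,A)^{1/2}UF_{k,t}(A,B)^{1/2}$, the two statements coinciding when $k=\frac12$ (the Gan--Tam case $L=1$). So your proposal is exactly as strong as the paper's proof, and your Riccati mechanism adapts immediately to the corrected exponent, whereas this dependence is easier to overlook in the polar-decomposition formulation.
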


\begin{proof}
First of all, it is well-known  {(see, for example, \cite[Theorem 2.1]{FP})} that for $A,B\in \mathcal{B}(H)_{++}$, there exists a unitary operator $U$ such that
\begin{equation}
A\# B=A^{\frac{1}{2}}UB^{\frac{1}{2}}.
\label{eq:geometric mean with U}
\end{equation}
In fact, we put $U=(A^{\frac{-1}{2}}BA^{\frac{-1}{2}})^{\frac{1}{2}}A^{\frac{1}{2}}B^{-\frac{1}{2}}$. 

Let $G_{k,t}=(A^{-1}\#_{k}B)^{t}$. Then we have $G^{-1}_{k,t}F_{k,t}(A,B)G^{-1}_{k,t}=A^{L}$, and $|F_{k,t} {(A,B)}^{\frac{1}{2}}G_{k,t}^{-1}|=A^{\frac{L}{2}}$. Hence we have the polar decomposition $F_{k,t}(A,B)^{\frac{1}{2}}G_{k,t}^{-1}=V_{1}A^{\frac{L}{2}}$. By the same way, we have $F_{1-k,t}(B,A)^{\frac{1}{2}}G_{k,t}=V_{2}B^{\frac{L}{2}}$ for some unitary $V_{2}$, since $G_{k,t}=(A^{-1}\#_{k}B)^{t}=(B^{-1}\#_{1-k}A)^{-t}$.

Hence there exists a unitary operator $U$ such that
\begin{align*}
GF_{1-k,t}(B,A)^{\frac{1}{2}}UF_{k,t}(A,B)^{\frac{1}{2}}G^{-1}
& = B^{\frac{L}{2}}V_{2}^{*}UV_{1}A^{\frac{L}{2}}\\
& =B^{L}\# A^{L}=A^{L}\# B^{L},
\end{align*}
where the second equality holds by \eqref{eq:geometric mean with U}.
\end{proof}

 In the case of $k=\frac{1}{2}$, since $A \natural_t B$ satisfies the transposition property  {\cite[Proposition 4.2]{Lee and Lim}}, it follows that $F_{\frac{1}{2},t}(A,B) = F_{\frac{1}{2},1-t}(B,A)$. In the case of $k\not= \frac{1}{2}$, Dinh-Tam-Vuong in \cite[Remark 2.3]{Dinh-Tam-Vuong} showed that $F_{k,\frac{1}{2}}(A,B)$ does not satisfy the transposition property. Therefore, $F_{k,t}(A,B)$ does not satisfy the transposition property in general.\par


\par
\medskip

\section{Ando--Hiai property for $(k,t)$-spectral geometric mean}
%
Let
$F : \mathcal{B}(H)_{++} \times \mathcal{B}(H)_{+} \to \mathcal{B}(H)_{+}$
be a two-variable function. 
We say that $F$ has the \emph{Ando--Hiai property} for an exponent $q>0$ if it satisfies the implication: For $A, B\in \mathcal{B}(H)_{++}$,
\[
   F(A,B)\le I \ \Rightarrow\  F(A^q,B^q)\le I.
\]
If, in addition, $F$ satisfies properties similar to those of spectral geometric means,
namely, joint homogeneity and norm continuity in the second variable,
then the Ando--Hiai property can be characterized in terms of a norm inequality as follows.

\begin{prp}\label{prp-hom}
Let $q$ be a positive real number.
Suppose that a two-variable function $F: \mathcal{B}(H)_{++}\times \mathcal{B}(H)_{+}\to \mathcal{B}(H)_{+}$ satisfies the following conditions.
\begin{itemize}
  \item[(i)] $F(A,B)>0$ if $A,B>0$,
  \item[(ii)] $F(A,B)$ is norm continuous in the second variable,
  \item[(iii)] $F(A,B)$ is jointly homogeneous.
\end{itemize}
Then the following conditions are equivalent:
\begin{itemize}
\item[(1)] $F$ has the \emph{Ando--Hiai property} for $q$;
\item[(2)]
$
\|F(A^q,B^q)\|\le \|F(A,B)\|^q
\quad \text{for all } A>0 \text{ and } B\ge 0.
$
\end{itemize}
Here $\|\cdot\|$ denotes the operator norm.
\end{prp}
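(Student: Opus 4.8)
The plan is to prove the two implications separately, with essentially all of the work concentrated in $(1)\Rightarrow(2)$.

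For the easy direction $(2)\Rightarrow(1)$ I would use only the elementary fact that a positive operator $T$ satisfies $T\le I$ if and only if $\|T\|\le 1$. Assuming $F(A,B)\le I$, positivity gives $\|F(A,B)\|\le 1$, and then $(2)$ yields $\|F(A^q,B^q)\|\le\|F(A,B)\|^q\le 1$; since $F$ takes values in $\mathcal{B}(H)_{+}$, this forces $F(A^q,B^q)\le I$. Note that this direction needs neither homogeneity nor continuity, only condition (i) implicitly through positivity of the values.

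For $(1)\Rightarrow(2)$ the key is a scaling/normalization argument driven by joint homogeneity (iii), that is, $F(\lambda A,\lambda B)=\lambda F(A,B)$ for $\lambda>0$. First I would treat the case $B>0$, where hypothesis (i) guarantees $c:=\|F(A,B)\|>0$. Rescaling by $c^{-1}$ and using (iii), the operator $F(c^{-1}A,c^{-1}B)=c^{-1}F(A,B)$ is positive with norm $1$, hence $\le I$. Since $c^{-1}A>0$ and $c^{-1}B>0$, the Ando--Hiai property (1) applies and gives $F\big((c^{-1}A)^q,(c^{-1}B)^q\big)\le I$. Because $(c^{-1}A)^q=c^{-q}A^q$ and likewise $(c^{-1}B)^q=c^{-q}B^q$, a further application of (iii) produces $c^{-q}F(A^q,B^q)\le I$, i.e. $F(A^q,B^q)\le c^q I$, which is precisely $\|F(A^q,B^q)\|\le\|F(A,B)\|^q$.

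It then remains to drop the restriction $B>0$, and this is where norm continuity in the second variable (ii) is needed. For $B\ge 0$ I would replace $B$ by $B+\varepsilon I>0$, apply the case just proved, and let $\varepsilon\downarrow 0$. The one point requiring care is that $(B+\varepsilon I)^q\to B^q$ in operator norm, which follows from uniform continuity of $x\mapsto x^q$ on the bounded spectrum of $B$; combined with (ii) this gives $\|F(A^q,(B+\varepsilon I)^q)\|\to\|F(A^q,B^q)\|$ and $\|F(A,B+\varepsilon I)\|\to\|F(A,B)\|$, so the inequality survives the limit. I expect this limiting passage, rather than the scaling identity, to be the only genuinely delicate step, since it is where the stated ``norm continuity in the second variable'' must be matched against continuity of the functional calculus $B\mapsto B^q$.
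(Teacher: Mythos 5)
Your proof is correct and takes essentially the same route as the paper's: normalize by $\|F(A,B+\varepsilon I)\|$ using joint homogeneity, apply the Ando--Hiai property to the normalized pair, undo the scaling with (iii), and pass to the limit $\varepsilon\downarrow 0$ via (ii). The only cosmetic difference is that the paper runs the $\varepsilon$-regularization $B+\varepsilon I$ from the outset instead of first settling the case $B>0$, and your explicit observation that $(B+\varepsilon I)^q\to B^q$ in operator norm (needed before (ii) can be invoked) is a point the paper leaves implicit.
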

\begin{proof}
The implication $(2)\Rightarrow(1)$ is immediate.
We prove $(1)\Rightarrow(2)$.
We assume that $A>0$ and $B\geq 0 $.
Then for $\varepsilon >0$, $ B+\varepsilon I>0$, and by (i),
\[
\alpha_\varepsilon := \|F(A,B+\varepsilon I)\| >0.
\]
By the joint homogeneity of $F$ (iii), we have 
$$ F\left(\frac{A}{\alpha_{\varepsilon}},\frac{B+\varepsilon I}{\alpha_{\varepsilon}} \right)=\frac{F(A,B+\varepsilon I)}{\alpha_{\varepsilon}}\leq I.$$
Hence by (1),
we have
\[
\left\| F\!\left(\frac{A^q}{\alpha_\varepsilon^q},
\frac{(B+\varepsilon I)^q}{\alpha_\varepsilon^q}\right) \right\| \le 1.
\]
By (iii), it is equivalent to 
\[
\|F(A^q,(B+\varepsilon I)^q)\|
\le \|F(A,B+\varepsilon I)\|^q.
\]
By the norm continuity of $F$ in the second variable  (ii), letting $\varepsilon\downarrow 0$
yields the desiblack inequality.
\end{proof}

We remark that the $(k,t)$-spectral geometric mean $F_{k,t}(A,B)$ satisfies the conditions (i)--(iii) in Proposition \ref{prp-hom}. ((ii) can be proven by the functional calculus.)

\color{black}

We show that a two-variable operator function $F_{k,t}$ for $k,t\in (0,1)$ satisfies the Ando-Hiai property for some $q$. To show this, we consider $2\times 2$ matrices $A_{x,y}$ and $B$ defined below at first. Then we give a necessary condition of $q$ such that Ando--Hiai inequality holds. Next, we give a sufficient condition of $q$ such that Ando--Hiai inequality holds by using operator theoretic approach.
%
%
%

\begin{prp}\label{prp: 2--by--2}
For real numbers $x,y>0$, set
\[
 A_{x,y}:=\begin{pmatrix} x+y & y-x \\ y-x & x+y \end{pmatrix}, 
 \qquad 
 B:=\begin{pmatrix} 1 & 0 \\ 0 & 0 \end{pmatrix}.
\]
Suppose $q>0$ and ${1\over 2}>t (1-k )$ \ (resp. ${1\over 2}<t (1-k )$). 
If  {Proposition \ref{prp-hom} (2)} 
holds for all $x,y>0$, then necessarily $0< q \le 1$ \ (resp. $q\ge 1$).
\end{prp}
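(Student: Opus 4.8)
The plan is to exploit the very special structure of the pair $(A_{x,y},B)$ to collapse the two-variable norm inequality of Proposition~\ref{prp-hom}(2) to a single scalar inequality, and then to read off the constraint on $q$ from its behaviour as $y/x\to\infty$. Three reductions set this up. First, $B$ is a rank-one orthogonal projection, so $B^q=B$ for every $q>0$ and condition (2) for this pair reads $\|F_{k,t}(A_{x,y}^q,B)\|\le\|F_{k,t}(A_{x,y},B)\|^q$. Second, all the $A_{x,y}$ are simultaneously diagonalized by the fixed Hadamard unitary $U=\tfrac1{\sqrt2}\begin{pmatrix}1&1\\1&-1\end{pmatrix}$, with $U^*A_{x,y}U=\mathrm{diag}(2y,2x)$; in particular the power map stays inside the family, $A_{x,y}^q=A_{2^{q-1}x^q,\,2^{q-1}y^q}$. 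Third, $U^*BU=\tfrac12\begin{pmatrix}1&1\\1&1\end{pmatrix}=vv^*$ is again a rank-one projection.

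The key step is to compute $g(x,y):=\|F_{k,t}(A_{x,y},B)\|$ explicitly. Using the unitary covariance (Proposition~\ref{prp-Fp}(3)) I pass to the diagonal picture $F_{k,t}(D,vv^*)$ with $D=\mathrm{diag}(2y,2x)$. Because $vv^*$ is rank one, the weighted geometric mean collapses to a scalar multiple of the projection, $D^{-1}\#_k vv^*=\|D^{1/2}v\|^{2(k-1)}vv^*=(x+y)^{k-1}vv^*$, whence $F_{k,t}(D,vv^*)=(x+y)^{2(k-1)t}\,(v^*D^Lv)\,vv^*$ is itself a scalar times a projection. Evaluating $v^*D^Lv$ gives the closed form
\[
g(x,y)=2^{L-1}(x+y)^{2(k-1)t}\,(x^L+y^L),\qquad L=1+2t-4kt .
\]

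With this formula condition (2) becomes the scalar inequality $g(2^{q-1}x^q,2^{q-1}y^q)\le g(x,y)^q$ for all $x,y>0$. I would normalise $x=1$ and let $y\to\infty$. Using $2(k-1)t+L=1-2kt$, both sides grow like $y^{q(1-2kt)}$, so the leading powers cancel and the surviving information is the comparison of prefactors, namely $2^{L-1}(2^{q-1})^{1-2kt}\le 2^{(L-1)q}$. Taking logarithms and rearranging, and noting the identity $(1-2kt)-(L-1)=2-2kt-L=1-2t(1-k)$, this is exactly $(q-1)\bigl(1-2t(1-k)\bigr)\le0$. Hence if $\tfrac12>t(1-k)$ the factor $1-2t(1-k)$ is positive and we are forced to $q\le1$, i.e.\ $0<q\le1$; if $\tfrac12<t(1-k)$ it is negative and we are forced to $q\ge1$, which is the claimed dichotomy.

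The main obstacle is the explicit norm evaluation: one has to justify the rank-one collapse of $D^{-1}\#_k vv^*$ and then bookkeep the exponents so that the leading $y$-powers genuinely cancel, leaving only the clean coefficient comparison. A secondary point meriting care is the exponent $L$ itself: the asymptotic matching above is the generic behaviour, and the degenerate value $L=0$ (where the factor $x^L+y^L$ is constant and so contributes no $y$-power) must be examined separately; away from it the argument yields precisely the stated bounds, and note that the cases of interest $k=\tfrac12$ and $t=\tfrac12$ both give $L\neq 0$.
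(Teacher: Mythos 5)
Your proposal is correct and follows essentially the same route as the paper: you test against the same family, obtain the same closed-form norm (your $g(x,y)$ is exactly the paper's $\varphi_{k,t,L}(x,y)$, which the paper gets by citing \cite[Lemma 4]{Hoa-Osaka-Wada} instead of your direct diagonalization and rank-one collapse), and extract the constraint $(q-1)\bigl(1-2t(1-k)\bigr)\le 0$ from a one-parameter limit, with your choice $x=1$, $y\to\infty$ interchangeable with the paper's $x=1$, $y\to 0$ via the symmetry $\varphi(x,y)=\varphi(y,x)$ and joint homogeneity.

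One remark: your caveat about the exponent $L$ is substantive rather than pedantic, since the paper's displayed limit silently assumes $L>0$ (it discards the terms $(2y)^{qL}$ and $(2y)^{L}$). For $L<0$ your stated growth rate $y^{q(1-2kt)}$ is not the actual one (those terms now tend to $0$ and both sides grow like $y^{2q(k-1)t}$), but the prefactor comparison is unchanged and yields the same constraint, so your argument survives there with only the bookkeeping adjusted; at $L=0$, however (possible when $k>3/4$ and $t=\frac{1}{4k-2}$, a point lying in the regime $t(1-k)<\frac12$), the family inequality reduces to the power-mean inequality $\frac{(2x)^q+(2y)^q}{2}\ge (x+y)^q$, which holds precisely for $q\ge 1$, so the stated dichotomy genuinely degenerates and your instinct to exclude that value is warranted — note the paper's own proof does not address this case either, though it is harmless for the corollaries, where $k=\frac12$ or $t=\frac12$ force $L>0$.
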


\medskip
To prove Proposition \ref{prp: 2--by--2}, we recall the following lemma, 
whose proof can be found in \cite[Lemma 4]{Hoa-Osaka-Wada}.

\begin{lem} \label{lem-axy}
For positive real numbers $x,y>0$ and $k\in (0,1)$, the following equation holds:
\[
\begin{pmatrix} x+y & y-x \\ y-x & x+y \end{pmatrix}
\#_{k }
\begin{pmatrix} 1 & 0 \\ 0 & 0 \end{pmatrix}
=
\left(\frac{4xy}{x+y}\right)^{1-k }
\begin{pmatrix} 1 & 0 \\ 0 & 0 \end{pmatrix}.
\]
\end{lem}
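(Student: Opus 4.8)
The plan is to exploit the fact that the second argument $B$ is a rank-one projection, which forces the weighted geometric mean to collapse to a scalar multiple of $B$. Write $B=e\,e^{*}$ with $e=\binom{1}{0}$. One first checks that $A_{x,y}>0$: its trace is $2(x+y)$ and its determinant is $(x+y)^{2}-(y-x)^{2}=4xy$, so its eigenvalues are $2x$ and $2y$, both positive. Consequently $A_{x,y}^{-1/2}BA_{x,y}^{-1/2}=ww^{*}$ with $w:=A_{x,y}^{-1/2}e$, so this matrix is itself rank one.

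Next I would invoke the elementary identity, valid for any nonzero vector $w$ and any $t>0$,
\[
(ww^{*})^{t}=\|w\|^{2(t-1)}\,ww^{*},
\]
which holds because $ww^{*}=\|w\|^{2}\,\hat w\hat w^{*}$ with $\hat w=w/\|w\|$ a unit vector and $\hat w\hat w^{*}$ a projection. Substituting this into the definition $A_{x,y}\#_{k}B=A_{x,y}^{1/2}\,(A_{x,y}^{-1/2}BA_{x,y}^{-1/2})^{k}\,A_{x,y}^{1/2}$ and using $A_{x,y}^{1/2}\,ww^{*}\,A_{x,y}^{1/2}=B$, every occurrence of $A_{x,y}^{\pm 1/2}$ telescopes and I obtain
\[
A_{x,y}\#_{k}B=\|w\|^{2(k-1)}\,B=\langle A_{x,y}^{-1}e,e\rangle^{\,k-1}\,B,
\]
where the final equality uses $\|w\|^{2}=\langle A_{x,y}^{-1}e,e\rangle=(A_{x,y}^{-1})_{11}$.

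It then remains only to evaluate the scalar. Since $\det A_{x,y}=4xy$, the $(1,1)$ entry of the inverse is $(A_{x,y}^{-1})_{11}=(x+y)/4xy$, whence $\langle A_{x,y}^{-1}e,e\rangle^{\,k-1}=\big((x+y)/4xy\big)^{k-1}=\big(4xy/(x+y)\big)^{1-k}$, which is exactly the claimed coefficient. The only point requiring a little care is that $B$ is singular, so one must confirm that the defining formula for $\#_{k}$ remains valid for $A>0$ and $B\ge 0$; this causes no trouble because $A_{x,y}^{-1/2}BA_{x,y}^{-1/2}\ge 0$ and the map $X\mapsto X^{k}$ is well defined and continuous on the positive semidefinite cone for $k\in(0,1)$. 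I do not expect any genuine obstacle: once the rank-one reduction is noticed, the argument is entirely computational.
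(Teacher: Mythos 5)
Your proof is correct. Note that the paper itself does not prove this lemma at all: it simply cites \cite[Lemma 4]{Hoa-Osaka-Wada}, so your argument supplies a self-contained verification where the paper defers to the literature. Every step checks out: the eigenvalues of $A_{x,y}$ are indeed $2x$ and $2y$ (trace $2(x+y)$, determinant $4xy$), so $A_{x,y}>0$; the rank-one identity $(ww^*)^k=\|w\|^{2(k-1)}ww^*$ is valid for $w\neq 0$ (and $w=A_{x,y}^{-1/2}e\neq 0$ since $A_{x,y}^{-1/2}$ is invertible); the telescoping $A_{x,y}^{1/2}ww^*A_{x,y}^{1/2}=ee^*=B$ is exact; and the scalar evaluation $\|w\|^2=\langle A_{x,y}^{-1}e,e\rangle=(A_{x,y}^{-1})_{11}=(x+y)/(4xy)$ gives precisely the coefficient $\bigl(4xy/(x+y)\bigr)^{1-k}$. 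You were also right to flag the one genuinely delicate point, namely that $B$ is singular: the definition $A\#_k B=A^{1/2}(A^{-1/2}BA^{-1/2})^kA^{1/2}$ used throughout the paper is stated for $A>0$, $B\geq 0$, and $X\mapsto X^k$ (with $0^k=0$) is well defined and continuous on the positive semidefinite cone for $k\in(0,1)$, so no regularization of $B$ is needed. Your rank-one reduction is arguably the cleanest possible route, and it generalizes immediately: for any $A>0$ and any rank-one $B=vv^*$ one gets $A\#_k B=\langle A^{-1}v,v\rangle^{k-1}\,vv^*$, of which the lemma is the special case $v=e$, whereas the specific $2\times 2$ form of $A_{x,y}$ enters only through the single entry $(A_{x,y}^{-1})_{11}$.
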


\begin{proof}[Proof of Proposition \ref{prp: 2--by--2}]
By Proposition \ref{prp-hom}, we shall show
\begin{equation}\label{a-h}
 \|F_{k,t}(A_{x,y}^q, B^q)\|
 \;\le\;
 \|F_{k,t}(A_{x,y}, B)\|^q
\end{equation}
 {holds for all  $x,y>0$ and only when $q\in (0,1]$.}
Put $L=1+2t-4kt$. By a straightforward computation,

\begin{align*}
  A_{x,y}^{q} 
  &= \frac{1}{2}\begin{pmatrix} (2x)^{q}+(2y)^{q} & (2y)^{q}-(2x)^{q} \\ (2y)^{q}-(2x)^{q} & (2x)^{q}+(2y)^{q} \end{pmatrix} 
   = A_{\tfrac{(2x)^q}{2}, \tfrac{(2y)^q}{2}}, \\
  A_{x,y}^{-1} 
  &= \frac{1}{4xy}\begin{pmatrix} x+y & x-y \\ x-y & x+y \end{pmatrix} 
   = \frac{1}{4xy}A_{yx}. 
\end{align*}

It follows from Lemma~\ref{lem-axy} that 
\[
 (A_{x,y}^{-1}\#_{k } B)^{t} = \left(\frac{1}{x+y}\right)^{(1-k )t}B, 
 \]
 \[
 F_{k ,t}(A_{x,y},B)=
   \left(
   \frac{(2x)^L+(2y)^L}{2(x+y)^{2t(1-k )}}\right)B.
\]
Define
\begin{equation}
\varphi_{k ,t,L}(x,y)
   := {\|F_{k,t}(A_{x,y},B)\|=} \frac{(2x)^L+(2y)^L}{2(x+y)^{2t(1-k )}}.
\label{eq:star}
\end{equation}
Then inequality (\ref{a-h}) is equivalent to

$$\varphi_{k ,t,L}\!\left(\frac{(2x)^q }{2},\frac{(2y)^q }{2}\right)
\le
\varphi_{k ,t,L}(x,y)^q .$$
By setting $x=1$ and letting $y\rightarrow 0$, we obtain
\[
2^{2t(1-q)(1-k )+qL-1} \le 2^{q(L-1)},
\]
which is equivalent to
\[
q(1-2t(1-k ))\le 1-2t(1-k ).
\]
This completes the proof.
\end{proof}


 {We put $k=\frac{1}{2}$ and $t=\frac{1}{2}$ in Proposition \ref{prp: 2--by--2}, we can obtain the following corollaries, respectively.}

\begin{cor}
Suppose $t\in (0,1)$ and $q>0$. If the  {implication}
\begin{equation*}\label{ando-hiai-norm} 
A\natural_t B\le I \Rightarrow A^q \natural_t B^q\le I
 \end{equation*}
holds for all  {$A\in \mathcal{B}(H)_{++}$ and $B\in \mathcal{B}(H)_{+}$}, then $0< q \le 1$.
\end{cor}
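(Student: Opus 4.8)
The plan is to recognize this statement as the specialization $k=\tfrac12$ of Proposition \ref{prp: 2--by--2}, so that essentially no new work is required beyond checking that we land in the correct parameter regime. Since $A\natural_t B = F_{\frac12,t}(A,B)$ by definition, the hypothesized implication $A\natural_t B\le I \Rightarrow A^q\natural_t B^q\le I$ is exactly the statement that $F_{\frac12,t}$ has the Ando--Hiai property for the exponent $q$.

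First I would invoke the remark following Proposition \ref{prp-hom}, namely that $F_{k,t}$ (and in particular $F_{\frac12,t}$) satisfies conditions (i)--(iii) there: positivity, norm continuity in the second variable, and joint homogeneity. The direction $(1)\Rightarrow(2)$ of Proposition \ref{prp-hom} then converts the Ando--Hiai property into the norm inequality $\|F_{\frac12,t}(A^q,B^q)\|\le\|F_{\frac12,t}(A,B)\|^q$ for all $A>0$ and $B\ge 0$. Thus the hypothesis of the corollary yields, in particular, that this norm inequality holds for the family of $2\times 2$ test matrices $A_{x,y}$ and $B$ appearing in Proposition \ref{prp: 2--by--2}, i.e.\ condition (2) of that framework holds for all $x,y>0$.

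It then remains only to determine which case of Proposition \ref{prp: 2--by--2} applies. With $k=\tfrac12$ we have $t(1-k)=\tfrac{t}{2}$, and since $t\in(0,1)$ this gives $0<t(1-k)<\tfrac12$, so we are unambiguously in the regime $\tfrac12 > t(1-k)$. The first alternative of Proposition \ref{prp: 2--by--2} therefore forces $0<q\le 1$, which is the desired conclusion. There is no genuine obstacle here: all of the analytic content --- the explicit evaluation of $F_{k,t}$ on $A_{x,y}$ via Lemma \ref{lem-axy} and the limiting computation as $y\to 0$ --- is already carried out in Proposition \ref{prp: 2--by--2}, and the corollary is a direct reading of that proposition at the single parameter value $k=\tfrac12$.
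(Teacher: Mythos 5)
Your proposal is correct and is exactly the paper's own argument: the authors obtain this corollary by setting $k=\tfrac12$ in Proposition~\ref{prp: 2--by--2}, whose proof already routes through Proposition~\ref{prp-hom} in the way you describe. Your additional check that $t(1-k)=\tfrac{t}{2}<\tfrac12$ for all $t\in(0,1)$, so that only the first alternative of Proposition~\ref{prp: 2--by--2} can apply, is the right (and only) verification needed.
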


\begin{cor}\label{pre-ando-hiai}
Suppose $k\in (0,1)$ and $q>0$. If the  {implication}
\begin{equation*} \label{ando-hiai-norm2} 
 A\widetilde{\natural}_k B\le I \Rightarrow A^q \widetilde{\natural}_k B^q\le I
 \end{equation*}
holds for all  {$A\in \mathcal{B}(H)_{++}$ and $B\in \mathcal{B}(H)_{+}$}, then $0< q \le 1$.
\end{cor}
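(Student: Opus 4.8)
The plan is to read this corollary as the specialization $t=\tfrac12$ of Proposition~\ref{prp: 2--by--2}, combined with the identification $F_{k,\frac12}(A,B)=A\,\widetilde{\natural}_k\,B$ recorded in the Definition. The remark following Proposition~\ref{prp-hom} guarantees that $F_{k,t}$ satisfies conditions (i)--(iii) there, so the displayed implication is exactly the Ando--Hiai property for the two-variable function $F_{k,\frac12}$ with exponent $q$. By Proposition~\ref{prp-hom}, this property (assumed to hold for all $A>0$ and $B\ge 0$) is equivalent to the norm inequality
\[
\|F_{k,\frac12}(A^q,B^q)\|\le\|F_{k,\frac12}(A,B)\|^q,
\]
valid for all $A>0$ and $B\ge0$. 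In particular it holds for the special matrices $A_{x,y}$ and $B$ of Proposition~\ref{prp: 2--by--2}, i.e.\ condition (2) of Proposition~\ref{prp-hom} holds for all $x,y>0$.

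Next I would determine which branch of Proposition~\ref{prp: 2--by--2} is relevant by evaluating $t(1-k)$ at $t=\tfrac12$. Here $t(1-k)=\tfrac{1-k}{2}$, and since $k\in(0,1)$ forces $1-k\in(0,1)$, one obtains $\tfrac{1-k}{2}<\tfrac12$; hence $\tfrac12>t(1-k)$ for every admissible $k$. Thus we are always in the first case of Proposition~\ref{prp: 2--by--2}, whose conclusion is precisely $0<q\le1$. Applying it therefore finishes the argument.

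There is essentially no analytic obstacle: the whole content is already carried by Propositions~\ref{prp-hom} and~\ref{prp: 2--by--2}. The only points requiring care are the two reductions---passing from the operator implication to the norm inequality through the homogeneity and continuity hypotheses of $F_{k,\frac12}$, and confirming that the fixed exponent $t=\tfrac12$ places us on the correct side of the dichotomy $\tfrac12\gtrless t(1-k)$---neither of which involves any new computation.
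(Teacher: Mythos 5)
Your proposal is correct and follows exactly the paper's route: the paper proves this corollary in one line by setting $t=\tfrac{1}{2}$ in Proposition~\ref{prp: 2--by--2} (after the reduction through Proposition~\ref{prp-hom}, whose hypotheses (i)--(iii) the paper notes are satisfied by $F_{k,t}$). Your verification that $t(1-k)=\tfrac{1-k}{2}<\tfrac{1}{2}$ for all $k\in(0,1)$, placing the argument in the first branch of the dichotomy, is precisely the detail the paper leaves implicit.
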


Next, we give a sufficient condition of $q$ for which 
 $F_{k,t}$ satisfies 
 the Ando-Hiai property 
for all $A,B\in \mathcal{B}(H)_{++}$.

\begin{thm}\label{key1}
Let $A, B\in \mathcal{B}(H)_{++}$, and $k,t\in (0,\frac{1}{2}]$. Put  $L=1+2t-4kt$. 
Then the implication
$$F_{k,t}(A,B)\le I \ \Rightarrow \ F_{k,t}(A^q,B^q)\le I$$
holds for all $q$ in the range
\[
0<q \le
\left(\frac{1}{2kt}-\frac{1-k}{Lk}\right)^{-1}
=\frac{2ktL}{1-2kt} \leq 1.
\]
\end{thm}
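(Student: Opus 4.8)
The plan is to reduce the Ando--Hiai property to the norm inequality of Proposition~\ref{prp-hom}(2) and then to derive that norm inequality from the Riccati/geometric-mean characterization established in Proposition~\ref{prp-Fp}(5). Recall from Proposition~\ref{prp-Fp}(5) that $F_{k,t}(A,B)$ satisfies $A^{-L}\#F_{k,t}(A,B)=(A^{-1}\#_k B)^t$, so normalizing to the case $F_{k,t}(A,B)\le I$ amounts to controlling the geometric mean $A^{-L}\#F_{k,t}(A,B)$ against $A^{-L}\#I=A^{-L/2}$. The key is to translate the hypothesis $F_{k,t}(A,B)\le I$ into a bound on $(A^{-1}\#_k B)^t$ and then feed this into the classical Ando--Hiai inequality \eqref{eq:AH} for the \emph{weighted geometric mean} $\#_k$, for which monotonicity under the map $A\mapsto A^q$ is already available.

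First I would assume the normalization $F_{k,t}(A,B)\le I$; by Proposition~\ref{prp-Fp}(5) and the self-duality (4), this is equivalent to a manageable inequality relating $A^L$ and $(A^{-1}\#_k B)^t$. The main technical step is to exploit that $k,t\in(0,\tfrac12]$ and hence $L=1+2t-4kt\ge 1$ (indeed $L-1=2t(1-2k)\ge 0$ under $k\le\tfrac12$), which makes the operator-monotone machinery work in the right direction. I would then write the claimed exponent $q_0:=\frac{2ktL}{1-2kt}$ and verify the elementary inequality $q_0\le 1$ purely numerically: since $L=1+2t-4kt$, one computes $\frac{2ktL}{1-2kt}\le 1 \iff 2ktL\le 1-2kt \iff 2kt(L+1)\le 1 \iff 2kt(2+2t-4kt)\le 1$, and the latter holds on $(0,\tfrac12]^2$ (it is tight as $k,t\to\tfrac12$, where $2kt(2+2t-4kt)\to \tfrac12\cdot 1=\tfrac12\le 1$). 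This confirms the stated chain $0<q\le q_0\le 1$ and tells me that only exponents $q\le 1$ are in play, which is the \emph{right} regime because raising to a power $q\le1$ is operator monotone.

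The heart of the argument is the operator-theoretic estimate. Writing $C:=A^{-1}\#_k B$, the hypothesis gives, via Proposition~\ref{prp-Fp}(5)--(6) with $G_{k,t}=C^t$, the relation $F_{k,t}(A,B)=C^tA^LC^t\le I$, equivalently $C^t A^L C^t\le I$, i.e. $A^{L/2}C^{2t}A^{L/2}\le I$ after a similarity (using that $\|C^tA^{L/2}\|^2=\|A^{L/2}C^{2t}A^{L/2}\|$). I would then apply the Ando--Hiai inequality \eqref{eq:AH} for $\#_k$ together with the Löwner--Heinz inequality to control $C_q:=A^{-q}\#_k B^q$ in terms of $C=A^{-1}\#_k B$ when $q\le 1$; the point is that \eqref{eq:AH} for $\#_k$ yields, after inversion, a comparison of the form $A^{-q}\#_k B^q \le (A^{-1}\#_k B)^{?}$ in the operator order once $C$ is bounded. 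Reassembling, I would show $C_q^t A^{qL} C_q^t\le I$, which by Proposition~\ref{prp-Fp}(5) for the pair $(A^q,B^q)$ (noting the exponent there becomes $L'=1+2t-4kt=L$, unchanged) is exactly $F_{k,t}(A^q,B^q)\le I$.

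The hard part will be the passage from the scalar-exponent bound $C^tA^LC^t\le I$ to the corresponding bound with $A,B$ replaced by $A^q,B^q$: this is where the constraint on $q$ enters, and where one must combine the Ando--Hiai inequality for $\#_k$, the Löwner--Heinz operator monotonicity of $x\mapsto x^q$ for $q\le1$, and a careful interpolation of the three exponents $t$, $L$, and $2kt$ appearing in $F_{k,t}$. I expect the precise threshold $q_0=\frac{2ktL}{1-2kt}$ to emerge from optimizing (or matching) the exponents in a single application of Löwner--Heinz to a product of the form $A^{\alpha}(\text{geometric mean})A^{\alpha}$, the quantity $\frac{1}{2kt}-\frac{1-k}{Lk}$ being exactly the reciprocal weight that balances the ``$A$-side'' exponent $L$ against the ``mixing'' exponent $2kt$ that governs the homogeneity (cf. Proposition~\ref{prp-Fp}(1)--(2)). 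Verifying that this single inequality is \emph{sharp}, i.e. that no larger $q$ survives within this method, will match the necessary condition from Proposition~\ref{prp: 2--by--2} and close the gap between the necessary and sufficient ranges.
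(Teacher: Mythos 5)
Your outline gets the normalization right (from $F_{k,t}(A,B)\le I$ to $(A^{-1}\#_k B)^{2t}\le A^{-L}$, with $L\ge 1$ precisely because $k,t\in(0,\tfrac12]$), and the final reassembly you describe is roughly what is needed. But the heart of the proof is missing, and the tool you name for it would not work. The crucial step is to extract from the \emph{mixed} hypothesis $(A^{-1}\#_k B)^{2t}\le A^{-L}$ the \emph{pure} comparison
\[
B^{1/x}\le A^{-L},\qquad x=\frac{1}{2kt}-\frac{1-k}{Lk},
\]
and the classical Ando--Hiai inequality \eqref{eq:AH} for $\#_k$ cannot deliver this: \eqref{eq:AH} takes a hypothesis of the form $X\#_k Y\le I$ and only yields statements about $X^p\#_k Y^p$, whereas here $A$ appears on both sides of the hypothesis in an entangled way. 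The paper instead invokes the Grand Furuta Inequality (Theorem \ref{thm:GFI}), applied to the order relation $(A^{-1}\#_k B)^{2t}\le A^{-L}$ with the substitution $(p,s,\alpha,r)=\left(\frac{1}{2t},\frac{1}{k},\frac{1}{L},\frac{1}{L}\right)$; the constraints $p,s\ge 1$ are exactly where $t,k\le\tfrac12$ enter, and the exponent $\frac{1-\alpha+r}{(p-\alpha)s+r}=x^{-1}$ is where the threshold $q_0=\frac{2ktL}{1-2kt}$ actually comes from --- it does not ``emerge from optimizing a single application of L\"owner--Heinz,'' which is only strong enough after the Grand Furuta step has been done. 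Once $B^{1/x}\le A^{-L}$ is in hand, L\"owner--Heinz with $qx\le 1$ gives $B^q\le A^{-Lqx}$, and one compares $A^{-q}\#_k B^q\le A^{-q}\#_k A^{-Lqx}=A^{-\frac{Lq}{2t}}$ --- a power of $A$, not a bound of the form $A^{-q}\#_k B^q\le(A^{-1}\#_k B)^{?}$ as you suggest --- whence $(A^{-q}\#_k B^q)^{2t}\le A^{-Lq}$ and $F_{k,t}(A^q,B^q)\le I$. As written, your proposal never derives the key estimate, and the route sketched (AH for $\#_k$ plus L\"owner--Heinz) falls short of the Furuta-type strength required.

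A smaller point: your verification of $q_0\le 1$ has an arithmetic slip. Your chain of equivalences down to $2kt(L+1)=2kt(2+2t-4kt)\le 1$ is correct, but at $k=t=\tfrac12$ one has $2kt=\tfrac12$ and $2+2t-4kt=2$, so the product equals $1$ (equality), not $\tfrac12$ as you computed; moreover the validity of $2kt(2+2t-4kt)\le 1$ on $(0,\tfrac12]^2$ is asserted rather than proved (it does hold: with $u=2kt\le t$, the expression $2u(1+t-u)$ is increasing in $u$ on $(0,t]$ and equals $2t\le 1$ at $u=t$). The paper proves the bound more cleanly via $2t\bigl(kL+(1-k)\bigr)\le 2tL\le L$, which rearranges directly to $\left(\frac{1}{2kt}-\frac{1-k}{Lk}\right)^{-1}\le 1$.
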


To prove Theorem~\ref{key1}, we need the following result.

\begin{thm}[Grand Furuta Inequality {\cite[Theorem 1.1]{Furuta1995}}]\label{thm:GFI}
Let $0\leq B\leq A$ such that $A$ is invertible. Then %
$$ [A^{\frac{r}{2}}(A^{\frac{-\alpha}{2}}B^{p}A^{\frac{-\alpha}{2}})^{s}A^{\frac{r}{2}}]^{\frac{1-\alpha+r}{(p-\alpha)s+r}}\leq A^{1-\alpha+r}
$$
holds for all $p,s\geq 1$, $\alpha\in [0,1]$ and $r\geq \alpha$.
\end{thm}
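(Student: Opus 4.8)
\textbf{Proof plan for Theorem~\ref{thm:GFI} (Grand Furuta Inequality).}
The goal is to establish the operator inequality
\[
[A^{\frac{r}{2}}(A^{\frac{-\alpha}{2}}B^{p}A^{\frac{-\alpha}{2}})^{s}A^{\frac{r}{2}}]^{\frac{1-\alpha+r}{(p-\alpha)s+r}}\leq A^{1-\alpha+r}
\]
under the stated hypotheses $0\le B\le A$ with $A$ invertible, $p,s\ge 1$, $\alpha\in[0,1]$, and $r\ge\alpha$. The plan is to derive this from the standard Furuta inequality by a substitution argument, which is the classical route to the grand version. First I would recall the Furuta inequality in the form: if $C\ge D\ge 0$ with $C$ invertible, then for $p_{0}\ge 1$, $r_{0}\ge 0$ one has $(C^{\frac{r_{0}}{2}}D^{p_{0}}C^{\frac{r_{0}}{2}})^{\frac{1+r_{0}}{p_{0}+r_{0}}}\le C^{1+r_{0}}$ and the dual lower estimate. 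The idea is to feed into Furuta's inequality not the original pair $(A,B)$ but a cleverly chosen pair built from the block $A^{-\alpha/2}B^{p}A^{-\alpha/2}$.

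The key technical step is to first establish the base case $\alpha=1$, i.e.\ the inequality
\[
[A^{\frac{r}{2}}(A^{-\frac{1}{2}}B^{p}A^{-\frac{1}{2}})^{s}A^{\frac{r}{2}}]^{\frac{r}{(p-1)s+r}}\le A^{r},\qquad r\ge 1,
\]
and then obtain the general $\alpha\in[0,1]$ case by interpolating between $\alpha=0$ (which is essentially Furuta's inequality itself) and $\alpha=1$. For the $\alpha=1$ step I would set $D:=A^{-\frac{1}{2}}B^{p}A^{-\frac{1}{2}}$ and note from $B\le A$ that $A^{-\frac{1}{2}}B A^{-\frac{1}{2}}\le I$, hence by the Löwner--Heinz inequality $D=A^{-\frac{1}{2}}B^{p}A^{-\frac{1}{2}}\le(A^{-\frac{1}{2}}BA^{-\frac{1}{2}})^{p}\cdots$; more carefully, one uses $B^{p}\le A^{\frac{1}{2}}(A^{-\frac{1}{2}}BA^{-\frac{1}{2}})A^{\frac{1}{2}}$-type comparisons to control $D$ against a power of $A^{-1}$, positioning oneself to apply Furuta's inequality to the pair $(A^{-1},\,\text{something})$ with appropriately chosen exponents $p_{0}=s$ and $r_{0}=r-1$. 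The exponent bookkeeping is arranged precisely so that Furuta's output exponent $\frac{1+r_{0}}{p_{0}+r_{0}}=\frac{r}{(p-1)s+r}$ matches the target.

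The main obstacle will be the exponent arithmetic and the correct choice of substitution: one must verify that the hypotheses of Furuta's inequality ($p_{0}\ge 1$, $r_{0}\ge 0$) are met across the whole admissible parameter range $p,s\ge 1$, $\alpha\in[0,1]$, $r\ge\alpha$, and that the composite exponent $\frac{1-\alpha+r}{(p-\alpha)s+r}$ lands in $(0,1]$ so that Löwner--Heinz can be applied to raise intermediate inequalities to the needed powers without reversing the order. The interpolation in $\alpha$ is delicate because the monotonicity of operator powers fails for exponents outside $[0,1]$, so each step raising an operator inequality to a power must be checked to use only exponents in $[0,1]$. Since this is a known theorem of Furuta for which we cite \cite{Furuta1995}, the intent here is to outline the standard derivation rather than reproduce Furuta's full original argument; in the paper we use the result as a black box, so a complete self-contained proof is not required and the citation suffices.
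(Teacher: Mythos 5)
The paper offers no proof of this statement at all: it is imported verbatim as Furuta's Grand Furuta Inequality with the citation \cite[Theorem 1.1]{Furuta1995}, and is consumed downstream (in the proof of Theorem~\ref{key1}) purely as a black box. Your final decision --- to let the citation stand in place of a proof --- therefore matches the paper's treatment exactly, and on that operative level the two agree.

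However, the derivation you sketch around the citation contains steps that would fail if executed literally, so it should not be mistaken for a viable outline. First, the displayed step ``by the L\"owner--Heinz inequality $A^{-\frac{1}{2}}B^{p}A^{-\frac{1}{2}}\le (A^{-\frac{1}{2}}BA^{-\frac{1}{2}})^{p}$'' is false in general for $p>1$: L\"owner--Heinz only covers exponents in $[0,1]$, $x\mapsto x^{p}$ is not operator monotone for $p>1$, and conjugation does not commute with operator powers. An inequality of this shape is essentially what the Furuta machinery exists to circumvent; it cannot be assumed as a lemma. Second, there is no legitimate ``interpolation in $\alpha$'' between the endpoint cases $\alpha=0$ and $\alpha=1$: operator inequalities of this kind do not interpolate in the exponent parameters (order relations are not preserved under the nonlinear dependence on $\alpha$), and Furuta's actual 1995 argument handles the whole domain $p,s\ge 1$, $\alpha\in[0,1]$, $r\ge\alpha$ directly, by iterated applications of the ordinary Furuta inequality to carefully constructed intermediate operators with exact exponent bookkeeping, not by interpolating two special cases. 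Since the paper (and, in the end, you) use the theorem only via the reference, these defects are harmless here, but had a self-contained proof been required, both steps would constitute genuine gaps.
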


\begin{proof}[Proof of Theorem~\ref{key1}]
Since $0<t\le \tfrac{1}{2}$ and $0<k\leq \frac{1}{2}$, we have $L\geq 1$. From the assumption, we have
$$(A^{-1}\#_k B)^{2t}\le A^{-L}.$$
By the Grand Furuta Inequality, 
\[
\left[
A^{-\tfrac{rL}{2}}
\left\{
A^{\tfrac{\alpha L}{2}}
(A^{-1}\#_k B)^{2tp}
A^{\tfrac{\alpha L}{2}}
\right\}^s
A^{-\tfrac{rL}{2}}
\right]^{\frac{1-\alpha+r}{(p-\alpha)s+r}}\leq A^{-L(1-\alpha+r)}
\]
holds for all $p,s\geq 1$, $\alpha\in [0,1]$, and $r\geq \alpha$.
Now, putting 
\[
(p,s,\alpha,r):=\left(\tfrac{1}{2t}, \tfrac{1}{k}, \tfrac{1}{L}, \tfrac{1}{L}\right),
\]
the above inequality becomes
\begin{equation}
B^{
\left(\frac{1}{2tk}-\frac{1-k}{Lk}\right)^{-1}}
\le A^{-L}.
\label{eq:see}
\end{equation}
Setting 
\[
x:=
\left(\frac{1}{2kt}-\frac{1-k}{Lk}\right),
\]
this can be written as
\[
B^{1/x} \le A^{-L}.
\]
Since $0<qx\le 1$, we obtain
\[
B^q = B^{\tfrac{1}{x}\cdot qx} \le A^{-Lqx}.
\]
Therefore, it follows from $0<2t\leq 1$ that
\[
(A^{-q}\#_k B^q )^{2t}\le
(A^{-q}\#_k A^{-Lqx})^{2t}
=A^{-Lq}.
\]
This implies 
\begin{align*}
\NORM{F_{k,t}(A^q,B^q)} & = \NORM{(A^{-q} \#_k B^q)^t A^{qL} (A^{-q} \#_k B^q)^t} = \NORM{A^{\frac{qL}{2}}(A^{-q} \#_k B^q)^{2t} A^{\frac{qL}{2}}} \\
& \leq \NORM{A^{\frac{qL}{2}}(A^{-q} \#_k A^{-qxL})^{2t} A^{\frac{qL}{2}}} =1
\end{align*}
and so
\[
F_{k,t}(A^q,B^q)\le I.
\]
Next, from the evident relation
\[
2t(Lk+1-k)=2t(kL+(1-k)\cdot 1)\le 2tL\le L,
\]
we obtain
\[
2kt\le \frac{Lk}{Lk+1-k}.
\]
Rearranging this inequality gives
\[
1\ge \left(\frac{1}{2kt}-\frac{1-k}{Lk}\right)^{-1} \ (\ge q).
\]
\end{proof}
\begin{rmk}
The above theorem also holds for the binary operation $F_{k,t,L}$ with no joint homogeneity, for $L \ge 1$.
Indeed, in the proof above, the detailed definition of $L$ is not used; only the fact that $L \ge 1$ is requiblack.
\end{rmk}

%
%

As  {applications} of Theorem~\ref{key1}, we show that the $(\frac{1}{2},t)$  {and $(k,\frac{1}{2})$}-spectral geometric  {means have} the Ando-Hiai property under some restricted conditions:
 
\begin{thm}\label{thm-spg}
Let $A, B\in \mathcal{B}(H)_{++}$,  {and $t\in (0,1)$.} 
Then
$$A\natural_{t}B\leq I \quad \Longrightarrow \quad
A^{q} \natural_{t}B^{q} \leq I\quad\text{for all $0\leq q\leq \min\{\frac{t}{1-t},\frac{1-t}{t}\} $.}$$
Especially, in the case of $t=\frac{1}{2}$,
$$A\natural B\leq I \quad \Longrightarrow \quad
A^{q} \natural B^{q} \leq I\quad\text{for all $0\leq q\leq 1$}.$$
\end{thm}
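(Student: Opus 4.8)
The plan is to read the result off from Theorem~\ref{key1} by specializing $k=\tfrac12$, and to cover the full range $t\in(0,1)$ by a case split at $t=\tfrac12$, bridging the two halves with the transposition identity $A\natural_t B = B\natural_{1-t}A$ recorded earlier (equivalently $F_{\frac12,t}(A,B)=F_{\frac12,1-t}(B,A)$). The endpoint $q=0$ is immediate, since then $A^q=B^q=I$ and $I\natural_t I=I\le I$, so I only need to treat $q\in(0,\min\{\tfrac{t}{1-t},\tfrac{1-t}{t}\}]$.

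First I would take $t\in(0,\tfrac12]$. Then $(k,t)=(\tfrac12,t)$ satisfies the hypotheses of Theorem~\ref{key1}, and since $A\natural_t B=F_{\frac12,t}(A,B)$, that theorem applies verbatim. Here the exponent is $L=1+2t-4\cdot\tfrac12\cdot t=1$, so the admissible upper bound simplifies to
\[
\frac{2ktL}{1-2kt}=\frac{t}{1-t}.
\]
Because $t\le\tfrac12$ forces $\tfrac{t}{1-t}\le 1\le\tfrac{1-t}{t}$, this bound equals $\min\{\tfrac{t}{1-t},\tfrac{1-t}{t}\}$, and Theorem~\ref{key1} yields the desired implication for $0<q\le\tfrac{t}{1-t}$.

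Next I would handle $t\in[\tfrac12,1)$ by transposition. Writing $t'=1-t\in(0,\tfrac12]$, the identity $A\natural_t B=B\natural_{t'}A$ turns the hypothesis $A\natural_t B\le I$ into $B\natural_{t'}A\le I$. Applying the previous case to the pair $(B,A)$ with parameter $t'\le\tfrac12$ gives $B^q\natural_{t'}A^q\le I$ for $0<q\le\tfrac{t'}{1-t'}=\tfrac{1-t}{t}$; transposing back, $B^q\natural_{t'}A^q=A^q\natural_t B^q$, so $A^q\natural_t B^q\le I$ on the same range of $q$. Since $t\ge\tfrac12$ gives $\tfrac{1-t}{t}=\min\{\tfrac{t}{1-t},\tfrac{1-t}{t}\}$, the two cases together cover all $t\in(0,1)$, and the stated special case $t=\tfrac12$ follows at once because both fractions equal $1$, so $\min=1$.

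I do not expect a genuine obstacle: the analytic content is entirely absorbed into Theorem~\ref{key1} (hence into the Grand Furuta Inequality), and the only real idea is to exploit the transposition symmetry to reflect the range $t>\tfrac12$ onto $t<\tfrac12$. The sole points meriting care are the routine verifications behind the $\min$ in the statement, namely that the admissible bound in Theorem~\ref{key1} collapses to the clean value $\tfrac{t}{1-t}$ once $L=1$, and that the reflected bound in the transposed case matches $\tfrac{1-t}{t}$.
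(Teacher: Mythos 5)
Your proof is correct and follows essentially the same route as the paper: specialize Theorem~\ref{key1} at $k=\tfrac12$ (where $L=1$ gives the bound $\tfrac{t}{1-t}$) for $t\in(0,\tfrac12]$, then reflect the case $t\in[\tfrac12,1)$ via the transposition identity $A\natural_t B=B\natural_{1-t}A$. Your explicit handling of the endpoint $q=0$ and the verification that the bound collapses to $\min\{\tfrac{t}{1-t},\tfrac{1-t}{t}\}$ are small details the paper leaves implicit, but the argument is the same.
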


\begin{proof}
The case of $0<t\leq \tfrac{1}{2}$ follows immediately from Theorem~\ref{key1} by putting $k=\frac{1}{2}$.

For $\frac{1}{2}\leq t<1$, i.e., 
$0<1-t\leq \frac{1}{2}$, we obtain
$$B\natural_{1-t}A\leq I \quad \Longrightarrow \quad
B^{q} \natural_{1-t}A^{q} \leq I$$
 {for all $0\leq q\leq \min \{\frac{1-t}{1-(1-t)}, \frac{1-(1-t)}{1-t} \}=\min \{\frac{1-t}{t}, \frac{t}{1-t} \}$.}

Since $A\natural_{t}B=B\natural_{1-t}A$, the proof is completed.
\end{proof}

\begin{rmk}
We do not know whether the Ando-Hiai property for the $(\frac{1}{2},t)$-spectral geometric mean holds for  {$\min\{\frac{t}{1-t},\frac{1-t}{t}\} <q< 1$.} We searched for a counterexample using a computer but were unable to find one.
\end{rmk}

\begin{pbm}
  Let $A,B \in \mathcal{B}(H)_{++}$ and  {$t\in (0,1)\setminus\{\frac{1}{2}\}$}. 
Then does the implication
$$ A\natural_{t}B \le I \ \Rightarrow \ A^{q}\natural_{t}B^{q} \le I
$$
hold for all $0<q\leq 1$?
\end{pbm}

\medskip


Next, we show the Ando-Hiai property for $(k,\frac{1}{2})$-spectral  {geometric} mean under some restricted conditions.

\begin{thm}
Let $A, B \in \mathcal{B}(H)_{++}$. 
 If $0<k\leq \tfrac{1}{2}$, then
\begin{equation}\label{ando-hiai2}
A \,\widetilde{\natural}_{k}\, B\leq I \quad \Longrightarrow\quad
A^{q} \,\widetilde{\natural}_{k}\, B^{q} \leq I
\quad\text{for all \textcolor{black}{$0< q \le 2k$}.}
\end{equation}
\end{thm}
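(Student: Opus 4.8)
The plan is to obtain this statement as the special case $t=\tfrac{1}{2}$ of Theorem~\ref{key1}. First I would recall from the Definition that $A\,\widetilde{\natural}_k\,B = F_{k,\frac{1}{2}}(A,B)$, so that the implication in \eqref{ando-hiai2} is precisely the Ando--Hiai property for the operator function $F_{k,\frac{1}{2}}$. Since the standing hypothesis $0<k\le\tfrac{1}{2}$ together with the choice $t=\tfrac{1}{2}\in(0,\tfrac{1}{2}]$ meets the assumptions of Theorem~\ref{key1}, that theorem applies directly and yields the desired implication for all $q$ in the range $0<q\le \tfrac{2ktL}{1-2kt}$.

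The only remaining task is the routine evaluation of this admissible range. Setting $t=\tfrac{1}{2}$ gives $L=1+2t-4kt=2(1-k)$ and $2kt=k$, whence
\[
\frac{2ktL}{1-2kt}=\frac{k\cdot 2(1-k)}{1-k}=2k .
\]
Thus the upper bound supplied by Theorem~\ref{key1} collapses to exactly $2k$, so the range $0<q\le\tfrac{2ktL}{1-2kt}$ becomes $0<q\le 2k$, which is precisely the range asserted in \eqref{ando-hiai2}.

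Because the argument is a pure specialization, I do not anticipate any genuine obstacle: the entire content lies in the identification $A\,\widetilde{\natural}_k\,B=F_{k,\frac{1}{2}}(A,B)$ and in the short algebraic simplification of the bound above. In particular, no operator-theoretic input beyond the Grand Furuta Inequality already invoked in the proof of Theorem~\ref{key1} is required, and the hypotheses $A,B\in\mathcal{B}(H)_{++}$ match those of Theorem~\ref{key1} verbatim.
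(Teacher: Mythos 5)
Your proposal is correct and coincides with the paper's own proof: the theorem is obtained by specializing Theorem~3.7 (the result labelled \texttt{key1}) at $t=\tfrac{1}{2}$, and your computation $L=2(1-k)$, $2kt=k$, hence $\tfrac{2ktL}{1-2kt}=2k$, is exactly the simplification the paper relies on. No gaps.
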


\begin{proof}
If we put $t=\frac{1}{2}$ and $0<k\leq \frac{1}{2}$ in Theorem~\ref{key1}, then we have \eqref{ando-hiai2} for all $0<q\leq 2k$.
\end{proof}

It is not clear whether the statement (\ref{ando-hiai2}) holds when $0<k\leq \tfrac{1}{2}$ and $1>q>2k$. By contrast, in the opposite regime where $k>\frac{1}{2}$, one cannot expect favorable properties.


\begin{thm}\label{counter}
There exist  {$A\in \mathcal{B}(H)_{++}$, $B\in \mathcal{B}(H)_{+}$}
 {and} $k\in (\frac{1}{2},1)$ such that \eqref{ando-hiai2} does not hold for any exponent $q \in (0,1)$.
\end{thm}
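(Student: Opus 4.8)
The plan is to exhibit, for a suitable $k\in(\tfrac34,1)$, a single pair $(A,B)$ that defeats the implication \eqref{ando-hiai2} for \emph{every} $q\in(0,1)$ simultaneously. Since $\widetilde{\natural}_k=F_{k,\frac12}$ satisfies the hypotheses (i)--(iii) of Proposition~\ref{prp-hom}, and since a positive operator $T$ satisfies $T\le I$ exactly when $\|T\|\le 1$, it is enough to produce $A\in\mathcal{B}(H)_{++}$ and $B\in\mathcal{B}(H)_{+}$ with $\|A\,\widetilde{\natural}_k B\|\le1$ while $\|A^q\,\widetilde{\natural}_k B^q\|>1$ for all $q\in(0,1)$.

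First I would reuse the $2\times2$ data of Proposition~\ref{prp: 2--by--2}: take $A=A_{x,y}$ and $B=\left(\begin{smallmatrix}1&0\\0&0\end{smallmatrix}\right)$. The decisive feature of this $B$ is that it is a projection, so $B^q=B$, and combined with $A_{x,y}^{q}=A_{(2x)^q/2,(2y)^q/2}$ and Lemma~\ref{lem-axy} this gives $F_{k,\frac12}(A_{x,y}^{q},B^{q})=\psi(q)\,B$, where, writing $a=2x$, $b=2y$ and $m=1-k\in(0,\tfrac12)$,
\[
\psi(q)=2^{m-1}\,\frac{a^{2qm}+b^{2qm}}{(a^q+b^q)^m}.
\]
Hence $\|A^q\,\widetilde{\natural}_k B^q\|=\psi(q)$, and the whole problem collapses to the scalar task of choosing $a,b,m$ so that $\psi(q)>1$ on $(0,1)$ while $\psi(1)\le1$. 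Note that $\psi(0^{+})=1$ automatically, which is the source of the difficulty.

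Next I would linearize. Substituting $a=\sqrt c\,e^{s}$, $b=\sqrt c\,e^{-s}$ (with $c>1$ and $s>0$) converts $\log\psi$ into
\[
\log\psi(q)=\tfrac{qm}{2}\log c+G(qs),\qquad G(r):=\log\cosh(2mr)-m\log\cosh r .
\]
Since $G(r)=\tfrac{m(4m-1)}{2}r^{2}+O(r^{4})$ near $0$, the sign of $G$ for small $r$ is governed by $4m-1$; this is precisely why I would restrict to $m<\tfrac14$, i.e.\ $k\in(\tfrac34,1)\subset(\tfrac12,1)$. For such $m$ one has $G<0$ on an interval $(0,r_{0})$ with $G(r_{0})=0$, while $G\to+\infty$ at infinity, so $\phi(r):=-G(r)/r$ is positive on $(0,r_{0})$ and vanishes at both endpoints, and therefore attains its maximum at some interior point $s_{1}$ (take the leftmost one).

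The final step, which carries the real content, is to keep $\psi>1$ on the \emph{entire} open interval $(0,1)$ while still forcing $\psi(1)\le1$; the tension is that $\psi(0^{+})=1$ together with $\psi(1)\le1$ demands a ``bump'' that must remain strictly above $1$ in between. I would resolve this by setting $s=s_{1}$ and fixing $c>1$ through $\tfrac{m}{2}\log c=-G(s_{1})$, so that $\psi(1)=1$. Then for $q\in(0,1)$, writing $r=qs_{1}\in(0,s_{1})$,
\[
\log\psi(q)=-qG(s_{1})+G(qs_{1})=r\bigl(\phi(s_{1})-\phi(r)\bigr)>0,
\]
since $s_{1}$ is the leftmost maximizer of $\phi$. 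Thus $\psi(q)>1$ on $(0,1)$ and $\psi(1)=1$. Unwinding, $A=A_{x,y}$ with $x=\tfrac12\sqrt c\,e^{s_{1}}$, $y=\tfrac12\sqrt c\,e^{-s_{1}}$, together with $B=\left(\begin{smallmatrix}1&0\\0&0\end{smallmatrix}\right)$ and $k=1-m\in(\tfrac34,1)$, satisfies $A\,\widetilde{\natural}_k B=B\le I$ but $A^{q}\,\widetilde{\natural}_k B^{q}=\psi(q)B\not\le I$ for every $q\in(0,1)$. I expect the main obstacle to be exactly this global control of the bump on $(0,1)$: the maximizer choice $s=s_{1}$ is what turns it into the one-line inequality $\phi(s_{1})\ge\phi(r)$, and recognizing the necessity of $m<\tfrac14$ (equivalently $k>\tfrac34$) from the quadratic coefficient of $G$ is the other key point.
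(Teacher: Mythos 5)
Your proposal is correct, and while it starts from the same place as the paper --- the $2\times 2$ family $(A_{x,y},B)$ of Proposition~\ref{prp: 2--by--2}, Lemma~\ref{lem-axy}, and the scalar function $\varphi_{k,\frac12,L}$ (your $\psi(q)$ is exactly $\varphi_{k,\frac12,L}\bigl(\tfrac{(2x)^q}{2},\tfrac{(2y)^q}{2}\bigr)$ with $L=2m$) --- it takes a genuinely different route in the global step, which is where the real content lies. The paper only proves failure of the norm inequality \eqref{eq:AH norm inequality} for $q$ in a small interval $(0,q_L)$, by computing the derivative of $g_{L,x,y}$ at $q=0$ (Lemma~\ref{lem-hxy}), and then extends to all $q\in(0,1)$ by an iteration argument: if the property held at some $q_0\in[q_L,1)$, then powering gives it at $q_0^n$, which eventually lands in $(0,q_L)$, a contradiction; in that argument the matrices witnessing failure at a given $q$ implicitly vary with $q$. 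You instead calibrate the parameters once and for all: the substitution $a=\sqrt{c}\,e^{s}$, $b=\sqrt{c}\,e^{-s}$ gives $\log\psi(q)=\tfrac{qm}{2}\log c+G(qs)$ with $G(r)=\log\cosh(2mr)-m\log\cosh r$, and choosing $s=s_1$ to be the leftmost maximizer of $\phi(r)=-G(r)/r$ on $(0,r_0)$ and fixing $c$ by $\tfrac{m}{2}\log c=-G(s_1)$ yields the clean identity
\[
\log\psi(q)=qs_1\bigl(\phi(s_1)-\phi(qs_1)\bigr)>0,\qquad q\in(0,1),
\]
with $\psi(1)=1$. This produces a \emph{single fixed} pair $A=A_{x,y}$, $B$ with $A\,\widetilde{\natural}_k B=B\le I$ but $A^{q}\,\widetilde{\natural}_k B^{q}=\psi(q)B\not\le I$ for every $q\in(0,1)$ simultaneously --- formally stronger than what the paper's iteration delivers, and matching the literal ``there exist $A$, $B$'' phrasing of the statement more faithfully. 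The only cost is the restriction $m=1-k<\tfrac14$, i.e.\ $k\in(\tfrac34,1)$, forced by the quadratic coefficient $\tfrac{m(4m-1)}{2}$ of $G$; since the paper's own argument also only produces $k$ close to $1$ (it needs $L=2-2k<L_{x,y}$), nothing is lost for the existence statement. Your sign analysis of $G$ near $r=0$ plays precisely the role of the computation of $h_{x,y}$ in Lemma~\ref{lem-hxy}, and all the supporting details check out: $B^q=B$ since $B$ is a projection, $\phi$ extends continuously by $0$ to the endpoints of $[0,r_0]$ so an interior (leftmost) maximizer exists, and strictness of $\phi(qs_1)<\phi(s_1)$ for $q\in(0,1)$ follows from the leftmost choice.
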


In order to show this  {theorem}, we require the following lemma.  
For arbitrary $x,y,L>0$, define
\[
h_{x,y}(L):=
\log \left(\frac{(2x)^L+ (2y)^L}{2(x+y)^{\frac{L}{2}}}\right)-\frac{L}{4}\log 4xy.
\]

\begin{lem} \label{lem-hxy}
For any distinct $x, y > 0$, there exists a constant $L_{x,y} \in(0,1)$ such that 
\[
   h_{x,y}(L) < 0 \qquad \text{for all } L \in (0, L_{x,y}).
\]

\end{lem}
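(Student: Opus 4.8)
The plan is to treat the claim as a purely local statement about the smooth function $L \mapsto h_{x,y}(L)$ near $L = 0$, and to establish it through a first-order (Taylor) analysis. First I would note that $h_{x,y}$ extends continuously to $L = 0$ with value $h_{x,y}(0) = 0$: as $L \downarrow 0$ one has $(2x)^L + (2y)^L \to 2$ and $2(x+y)^{L/2} \to 2$, so the logarithmic term tends to $\log 1 = 0$, while $\tfrac{L}{4}\log 4xy \to 0$ as well. Thus it suffices to show that $h_{x,y}$ is strictly decreasing on a right-neighborhood of $0$.

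The decisive step is the computation of the (one-sided) derivative at the origin. Writing $h_{x,y}(L) = \log\big((2x)^L + (2y)^L\big) - \log 2 - \tfrac{L}{2}\log(x+y) - \tfrac{L}{4}\log 4xy$ and differentiating the first term, I would obtain
\[
h_{x,y}'(0) = \frac{\log(2x) + \log(2y)}{2} - \frac{1}{2}\log(x+y) - \frac{1}{4}\log 4xy = \frac{1}{4}\log\frac{4xy}{(x+y)^2}.
\]
By the arithmetic--geometric mean inequality, $(x+y)^2 \ge 4xy$ with equality precisely when $x = y$; hence for distinct $x, y$ the argument of the logarithm is strictly less than $1$, so $h_{x,y}'(0) < 0$.

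To conclude, since $h_{x,y}'$ is continuous on $[0,1)$ and $h_{x,y}'(0) < 0$, there exists $L_{x,y} \in (0,1)$ with $h_{x,y}'(L) < 0$ for all $L \in (0, L_{x,y})$. Combined with $h_{x,y}(0) = 0$, monotonicity (or the mean value theorem) yields $h_{x,y}(L) < 0$ throughout $(0, L_{x,y})$, which is exactly the assertion. I do not anticipate any genuine obstacle: the entire content is the sign of $h_{x,y}'(0)$, and the only point demanding a little care is the correct evaluation of that derivative together with the observation that its sign is dictated by AM--GM. Equivalently, one could bypass derivatives and directly expand $h_{x,y}(L) = \tfrac{1}{4}\log\frac{4xy}{(x+y)^2}\, L + O(L^2)$ as $L \downarrow 0$, from which the conclusion is immediate.
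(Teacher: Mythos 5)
Your proof is correct and takes essentially the same approach as the paper: both arguments rest on $h_{x,y}(0)=0$ together with the first-order computation $\left.\frac{dh_{x,y}}{dL}\right|_{L=0}=\frac{1}{4}\log\frac{4xy}{(x+y)^2}<0$, strict by AM--GM since $x\neq y$. The only difference is cosmetic: you spell out the derivative computation and the local argument in detail, while the paper states them immediately and additionally records $h_{x,y}(1)>0$, a fact not needed for the stated conclusion.
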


\begin{proof}
This follows immediately from the facts that
\[
\left.\frac{dh_{x,y}}{dL}\right|_{L=0}=
{1\over 4}\log \left( \frac{4xy}{(x+y)^2}\right) <0,
\]
$h_{x,y}(0)=0$ and $h_{x,y}(1)={1\over 4}\log\frac{(x+y)^2}{4xy} >0$.
\end{proof}

\begin{proof}[Proof of Theorem~\ref{counter}]
In the case of $t=\frac{1}{2}$ and $\frac{1}{2}<k< 1$, we have $L=2-2k$ and $0\leq L<1$. For the $2 \times 2$ matrices $A_{x,y}$ and $B$ treated in Proposition~\ref{prp: 2--by--2}, we shall show that for some $\frac{1}{2}<k< 1$ and $q_k\in (0,1)$, 
\begin{equation}
\NORM{A_{x,y}^q  \,\widetilde{\natural}_{k}\, B^q}
\leq \NORM{A_{x,y}  \,\widetilde{\natural}_{k}\, B}^q
\label{eq:AH norm inequality}
\end{equation}
does not hold for any $q\in (0,q_k)$. We consider the function $\varphi_{k,t,L}(x,y)$ which is appeablack  {in \eqref{eq:star}}
%
as follows.
\[
\varphi_{k,\frac{1}{2},L}(x,y)  {=\|A_
{x,y}\tilde{\natural}_{k}B\|}= \frac{(2x)^{L} + (2y)^{L}}{2(x+y)^{\frac{L}{2}}}.
\]
Then  {we have}
\[
\NORM{A_{x,y}^q  \,\widetilde{\natural}_{k}\, B^q} = \NORM{A_{\frac{(2x)^q}{2},\frac{(2y)^q}{2}} \,\widetilde{\natural}_{k}\, B} =  \varphi_{k,\frac{1}{2},L}\left(\frac{(2x)^q}{2}, \frac{(2y)^q}{2}\right),
\]
and 
\[
\NORM{A_{x,y}  \,\widetilde{\natural}_{k}\, B}^q=\varphi_{k,\frac{1}{2},L}(x,y)^q.
\]
Then, for some $L,q_L\in (0,1)$, we aim to prove
\[
\varphi_{k,\frac{1}{2},L}(x,y)^q- \varphi_{k,\frac{1}{2},L}\!\left( \frac{(2x)^q}{2}, \frac{(2y)^q}{2}\right) <0\qquad \mbox{for all $q\in (0,q_L)$}.
\]
Set
\[
g_{L,x,y}(q):=\log \varphi_{k,\frac{1}{2},L}(x,y)^q- \log \varphi_{k,\frac{1}{2},L}\!\left( \frac{(2x)^q}{2}, \frac{(2y)^q}{2}\right).
\]
It follows from Lemma~\ref{lem-hxy} that for any distinct $x, y > 0$, 
there exists $L_{x,y}\in(0,1)$ such that
\[
\left.\frac{dg_{L,x,y}}{dq}\right|_{q=0}=h_{x,y}(L)<0  \qquad  (L\in (0,L_{x,y})).
\]
Since $g_{L,x,y}(0)=0$, the desiblack claim follows.

Let $k\in (\frac{1}{2},1)$ and $q_{ {L}}\in (0,1)$ such that \eqref{eq:AH norm inequality}
does not hold for all $q\in (0,q_{ {L}} {)}$. Assume that there exists $q_{0} \in [q_{ {L}},1)$ such that \eqref{eq:AH norm inequality} holds. Then we have
$$
\NORM{A_{x,y}  \,\widetilde{\natural}_{k}\, B}\geq 
\NORM{A_{x,y}^{q_{0}}  \,\widetilde{\natural}_{k}\, B^{q_{0}}}^{\frac{1}{q_{0}}}
\geq 
\NORM{A_{x,y}^{q_{0}^{2}}  \,\widetilde{\natural}_{k}\, B^{q_{0}^{2}}}^{\frac{1}{q_{0}^{2}}}
\geq \cdots \geq
\NORM{A_{x,y}^{q_{0}^{n}}  \,\widetilde{\natural}_{k}\, B^{q_{0}^{n}}}^{\frac{1}{q_{0}^{n}}}
$$
for all $n=1,2,...$. Since $q_{0}\in (0,1)$, 
$q_{0}^{n}\in (0,q_{ {L}})$  {for sufficiently large $n$}, and it is a contradiction.
Hence, the proof is completed.
\end{proof}


\par
\medskip

\section{Two variable version of Ando--Hiai property}
In this section, we present a two variable version of the Ando--Hiai property for $(\frac{1}{2},t)$-spectral geometric mean.
Let $A,B\in \mathcal{B}(H)_{++}$. Consider
the following inequality:
\begin{equation}\label{ando-hiai_ex}
A \na_t B \le I
\;\Rightarrow\;
A^{r} \na_{\frac{rt}{s(1-t)+rt}} B^{s} \le I,
\end{equation}
where $r,s\geq 0$ and $t\in(0,1)$.  
Since $\na_t$ is jointly homogeneous for every $t\in(0,1)$, the inequality above is
equivalent to the following norm inequality:
\begin{equation}
\|A^{r} \na_{\frac{rt}{s(1-t)+rt}} B^{s}\|
\;\le\;
\|A \na_t B\|^{\,r\!\left(1-\frac{rt}{s(1-t)+rt}\right)
+ s\!\left(\frac{rt}{s(1-t)+rt}\right)}.
\label{eq: norm-one sided}
\end{equation}
Based on this fact, we investigate the admissible ranges of the exponents
$r$ and $s$ for which the inequality \eqref{ando-hiai_ex} holds.

%
\begin{thm}
Suppose $t\in (0,1)$. If the inequality \eqref{ando-hiai_ex} holds for all  {$A\in \mathcal{B}(H)_{++}$ and $B\in \mathcal{B}(H)_{+}$}, then $r,s\in (0,1]$.
\end{thm}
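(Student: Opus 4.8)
The plan is to substitute the explicit $2\times 2$ family $A_{x,y}$, $B$ from Proposition~\ref{prp: 2--by--2} into the norm form \eqref{eq: norm-one sided} of the hypothesis, which by joint homogeneity is equivalent to \eqref{ando-hiai_ex}, and then to deduce the bound on $s$ from the bound on $r$ via the transposition symmetry of $\na_t$. Throughout I set $D:=s(1-t)+rt$ and $\tau:=rt/D$; the identity $D+(s-r)t=s$ shows that the exponent on the right-hand side of \eqref{eq: norm-one sided} equals $rs/D$, which keeps the bookkeeping light.

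First I would evaluate both norms on the family. Using $A_{x,y}^{\,r}=A_{(2x)^r/2,\,(2y)^r/2}$, the fact that $B$ is a projection (so $B^{s}=B$ for $s>0$), the scaling rule $(\lambda C)\#E=\lambda^{1/2}(C\#E)$, and Lemma~\ref{lem-axy} with $k=\tfrac12$, a direct computation gives $A_{x,y}^{-r}\#B^{s}=\bigl(\tfrac{(2x)^r+(2y)^r}{2}\bigr)^{-1/2}B$, hence
\[
A_{x,y}^{\,r}\na_{\tau}B^{s}=\Bigl(\tfrac{(2x)^r+(2y)^r}{2}\Bigr)^{1-\tau}B,\qquad A_{x,y}\na_t B=(x+y)^{1-t}B,
\]
the latter in agreement with $\varphi_{\frac12,t,1}$ from \eqref{eq:star}. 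Feeding these into \eqref{eq: norm-one sided} and taking logarithms, the common positive factor $1-\tau=s(1-t)/D$ cancels, and for every admissible $s$ the inequality collapses to
\[
\frac{(2x)^{r}+(2y)^{r}}{2}\le (x+y)^{r}\qquad(x,y>0).
\]
Writing $a=2x$, $b=2y$, this is exactly midpoint concavity of $w\mapsto w^{r}$, which holds for all $x,y>0$ precisely when $0<r\le 1$; concretely, $x=1$ and $y\downarrow 0$ force $2^{r-1}\le 1$. Thus $r\le 1$.

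The main obstacle, and the reason the bound on $s$ cannot be read off in the same way, is that $B^{s}=B$ makes this family blind to $s$: the reduced inequality carries no $s$ at all, so no choice of $x,y$ constrains $s$ directly. I would bypass this by transposition. From the transposition property $A\na_t B=B\na_{1-t}A$, i.e. $F_{\frac12,t}(A,B)=F_{\frac12,1-t}(B,A)$, and likewise $A^{r}\na_{\tau}B^{s}=B^{s}\na_{1-\tau}A^{r}$, the inequality \eqref{eq: norm-one sided} for the data $(t,r,s)$ becomes, after relabelling $(\hat A,\hat B):=(B,A)$, precisely \eqref{eq: norm-one sided} for the data $(1-t,s,r)$: the weight $1-\tau=\tfrac{s(1-t)}{rt+s(1-t)}$ is the one prescribed by this new data, and the exponent $rs/D$ is symmetric under the swap. (The transposition identities hold for positive definite arguments, and the passage between $B\ge0$ and $B>0$ is the same continuity that underlies the equivalence of \eqref{ando-hiai_ex} and \eqref{eq: norm-one sided}.) Applying the bound already obtained to this relabelled instance, whose leading exponent is now $s$, yields $s\le1$. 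Together with the standing hypothesis $r,s\ge0$ this gives $r,s\in(0,1]$, the genuine content being the two upper bounds.
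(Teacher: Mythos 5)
Your proof is correct and follows essentially the same route as the paper's: evaluating \eqref{eq: norm-one sided} on the family $A_{x,y}$, $B$ of Proposition~\ref{prp: 2--by--2}, extracting $r\le 1$ via $x=1$, $y\downarrow 0$, and obtaining $s\le 1$ by interchanging the roles of $A$ and $B$ through the transposition property of $\na_t$. Your additional observations — that the reduced inequality is exactly midpoint concavity of $w\mapsto w^{r}$ and is entirely $s$-free, which is why the swap is genuinely needed — merely make explicit what the paper compresses into its final sentence.
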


\begin{proof}
For the $2\times 2$ matrices $A(=A_{x,y})$ and $B$ treated in
Proposition~\ref{prp: 2--by--2}, let us compute both sides of the inequality  {\eqref{eq: norm-one sided}}.
Set
\[
\alpha := \frac{rt}{s(1-t)+rt}.
\]
Then, by  {\eqref{eq:star}},
\[
\|A_{x,y} \na_t B\|  {=\varphi_{\frac{1}{2},t,1}(x,y)}= (x+y)^{1-t}.
\]
From the norm inequality just above, we obtain
\begin{align*}
\|A^{r} \na_{\frac{rt}{s(1-t)+rt}} B^{s}\|
&=
\|A_{ \frac{(2x)^r}{2},\ \frac{(2y)^r}{2} } \na_\alpha B\|
=
\left(
\frac{(2x)^r}{2} + \frac{(2y)^r}{2}
\right)^{1-\alpha} \\
&\le
\|A_{x,y} \na_t B\|^{\,r(1-\alpha)+s\alpha}
=
(x+y)^{(1-t)(\,r(1-\alpha)+s\alpha\,)} .
\end{align*}

Letting $x=1$ and $y\to 0$, we obtain
\[
2^{(r-1)(1-\alpha)} \le 1,
\]
which implies $r\le 1$.  
By interchanging the roles of $A$ and $B$, the same argument yields $s\le 1$.  
Hence we conclude that, in order for the inequality \eqref{ando-hiai_ex} to hold,
the exponents $r$ and $s$ must belong to the interval $(0,1]$.
\end{proof}


We now examine the relationship among the parameters $t$, $r$, and $s$.
\begin{thm} \label{thm-1}
Let $A, B\in\mathcal{B}(H)_{++}$.
\begin{itemize}
\item[(1)] If $0<t\leq \frac{1}{2}$, then
\[
A \na_t B \leq I \quad \Longrightarrow \quad A^r \na_{\frac{rt}{s(1-t)+rt}} B^s \leq I
\]
for $0<r\leq \frac{s(1-t)}{t}$ with $0<s\leq \frac{t}{1-t}\ (\leq 1)$.
\item[(2)] If $\frac{1}{2}\leq t {<} 1$, then
\[
A \na_t B \leq I \quad \Longrightarrow \quad A^r \na_{\frac{rt}{s(1-t)+rt}} B^s \leq I
\]
for $0<s\leq \frac{rt}{1-t}$ with $0<r\leq \frac{1-t}{t}\ (\leq 1)$.
\end{itemize}
In particular, 
\[
A\na B \leq I \quad \Longrightarrow \quad  A^r \na_{\frac{r}{r+s}} B^s \leq I 
\]
for all $r,s\in (0,1]$.
\end{thm}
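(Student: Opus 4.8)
The plan is to reduce everything to part (1) with $0<t\le\tfrac12$, and to prove (1) by extracting a single scalar power comparison from the hypothesis and then recombining it using operator monotonicity. First I would rewrite the order relations in a more convenient form. Writing $C=(A^{-1}\# B)^{t}$, the hypothesis $A\na_t B=CAC\le I$ is equivalent, upon conjugating by $C^{-1}$ and taking inverses, to
\[
(A^{-1}\# B)^{2t}\le A^{-1},
\]
and likewise the conclusion $A^{r}\na_{\alpha}B^{s}\le I$ (with $\alpha=\tfrac{rt}{s(1-t)+rt}$) is equivalent to $(A^{-r}\# B^{s})^{2\alpha}\le A^{-r}$. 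Next I would dispose of part (2) and the final assertion by symmetry: since $A\na_t B=B\na_{1-t}A$ and the weight satisfies $1-\alpha=\tfrac{s(1-t)}{s(1-t)+rt}$, the substitution $(A,B,r,s,t)\mapsto(B,A,s,r,1-t)$ turns the statement of (1) into that of (2). Moreover, at $t=\tfrac12$ part (1) covers the regime $r\le s$ while part (2) covers $s\le r$, so together they yield all $r,s\in(0,1]$ with $\alpha=\tfrac{r}{r+s}$.

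The heart of the argument is part (1), and the key step is to show that the hypothesis forces the scalar power comparison
\[
B^{\frac{t}{1-t}}\le A^{-1}.
\]
This is precisely the $k=\tfrac12$, $L=1$ instance of \eqref{eq:see} in the proof of Theorem~\ref{key1}: starting from $0\le (A^{-1}\# B)^{2t}\le A^{-1}$ and applying the Grand Furuta Inequality (Theorem~\ref{thm:GFI}) with $\mathcal A=A^{-1}$, $\mathcal B=(A^{-1}\# B)^{2t}$ and parameters $(p,s,\alpha,r)=(\tfrac{1}{2t},2,1,1)$, the inner factor collapses via the identity $A^{1/2}(A^{-1}\# B)A^{1/2}=(A^{1/2}BA^{1/2})^{1/2}$, leaving $B^{t/(1-t)}$ on the left and $A^{-1}$ on the right. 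The constraints of the Grand Furuta Inequality are satisfied exactly because $t\le\tfrac12$ forces $p=\tfrac{1}{2t}\ge1$.

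It then remains to recombine. Since $0<s\le\frac{t}{1-t}$, the exponent $\theta:=\frac{s(1-t)}{t}$ lies in $(0,1]$, so raising $B^{t/(1-t)}\le A^{-1}$ to the operator-monotone power $\theta$ gives $B^{s}\le A^{-s(1-t)/t}$. Monotonicity of $\#$ in its second variable, together with the commutativity of powers of $A$, then yields $A^{-r}\# B^{s}\le A^{-r}\# A^{-s(1-t)/t}=A^{-\frac12\left(r+\frac{s(1-t)}{t}\right)}$. Finally, the condition $0<r\le\frac{s(1-t)}{t}$ is exactly what guarantees $2\alpha\le1$, so raising to the operator-monotone power $2\alpha$ and using $\left(r+\tfrac{s(1-t)}{t}\right)\alpha=r$ produces $(A^{-r}\# B^{s})^{2\alpha}\le A^{-r}$, which is the reformulated conclusion.

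I expect the main obstacle to be essentially bookkeeping: verifying that every root taken along the way stays within the operator-monotone range $(0,1]$, and checking that the two required range restrictions are governed exactly by the two hypotheses on $r$ and $s$. The conceptual point that keeps the proof short is that the differing powers $r$ and $s$ need not be fed back into a Furuta-type inequality separately; once the single comparison $B^{t/(1-t)}\le A^{-1}$ is in hand, all remaining steps are carried out by operator monotonicity and the monotonicity of $\#$.
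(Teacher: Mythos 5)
Your proof is correct, and while it uses the same key ingredients as the paper, it is organized in a genuinely more direct way. The paper first proves a one-sided lemma (Lemma \ref{thm-2}), raising only the exponent of $B$ or only that of $A$, and then obtains Theorem \ref{thm-1}(1) by applying that lemma \emph{twice}, passing through the transposition $A\na_t B=B\na_{1-t}A$ at each stage; part (2) follows by transposition, as in your reduction. You instead extract the scalar comparison $B^{\frac{t}{1-t}}\le A^{-1}$ once --- your Grand Furuta parameters $(p,s,\alpha,r)=(\frac{1}{2t},2,1,1)$ are exactly the paper's $(\frac{1}{2t},\frac{1}{k},\frac{1}{L},\frac{1}{L})$ at $k=\frac12$, $L=1$, i.e.\ the relevant instance of \eqref{eq:see} --- and then handle both exponents $r,s$ in a single recombination: $B^s\le A^{-\frac{s(1-t)}{t}}$ by L\"owner--Heinz since $s\le\frac{t}{1-t}$, then $A^{-r}\# B^s\le A^{-\frac12(r+\frac{s(1-t)}{t})}$, then the outer power $2\alpha\le 1$ (equivalent to $r\le\frac{s(1-t)}{t}$) together with $\bigl(r+\frac{s(1-t)}{t}\bigr)\alpha=r$; all range checks are as you state. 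What each approach buys: the paper's iteration makes the one-sided implications available as standalone results, while your single-shot argument is shorter and has the incidental merit of invoking the Grand Furuta inequality only in the regime $t\le\frac12$, where $p=\frac{1}{2t}\ge1$ lies within its hypotheses --- the paper's Lemma \ref{thm-2}(2) cites \eqref{eq:see} for $t\in[\frac12,1)$, where $p=\frac{1}{2t}<1$ is formally outside the range in which that inequality was derived (the weaker bound $B\le A^{-\frac{1-t}{t}}$ actually used there can be justified separately, but the citation as written is loose, and your route avoids the issue entirely).
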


It is obvious that the case $r=s$ in Theorem~\ref{thm-1} is just the Ando-Hiai inequality of the $(\frac{1}{2},t)$-spectral geometric mean  (Theorem~\ref{thm-spg}).\par

To prove Theorem~\ref{thm-1}, we need the following lemma, which is a one-sided version of Theorem~\ref{thm-1}:

\begin{lem} \label{thm-2}
Let $A, B\in\mathcal{B}(H)_{++}$.
\begin{itemize}
\item[(1)] If $0<t\leq \frac{1}{2}$, then
\[
A \na_t B \leq I \quad \Longrightarrow \quad A \na_{\frac{t}{s(1-t)+t}} B^s \leq I \qquad \mbox{for $0<s\leq \frac{t}{1-t}$.}
\]
\item[(2)] If $\frac{1}{2}\leq t  {<} 1$, then
\[
A \na_t B \leq I \quad \Longrightarrow \quad A^r \na_{\frac{rt}{rt+1-t}} B \leq I \qquad \mbox{for $0<r\leq \frac{1-t}{t}$.}
\]
\end{itemize}
\end{lem}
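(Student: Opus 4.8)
The plan is to prove part (1) directly and to deduce (2) from it by the transposition identity $A\na_\gamma C=C\na_{1-\gamma}A$ (applied with the roles of $A,B$ interchanged and $t$ replaced by $1-t$, so that $1-t\in(0,\tfrac12]$). So I focus on (1), where $0<t\le\tfrac12$, $0<s\le\tfrac{t}{1-t}$ and $\alpha=\tfrac{t}{s(1-t)+t}$. First I would \emph{linearize} the hypothesis: since $A\na_t B\le I$ is equivalent, after conjugating by $(A^{-1}\#B)^{-t}$, to $(A^{-1}\#B)^{2t}\le A^{-1}$, the Grand Furuta computation carried out in the proof of Theorem~\ref{key1} (specialized to $k=\tfrac12$, $L=1$, where the exponent $x$ becomes $\tfrac{1-t}{t}$) yields the pure-power inequality $B^{t/(1-t)}\le A^{-1}$, equivalently $A\le B^{-t/(1-t)}$. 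This replaces the spectral mean in the hypothesis by a clean order relation between powers of $A$ and $B$.

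The key idea for the conclusion is not to attack $A\na_\alpha B^s$ in its given orientation but to transpose it. By the transposition property of $\na$, $A\na_\alpha B^s=B^s\na_{1-\alpha}A$, and since $s\le\tfrac{t}{1-t}$ forces $\alpha\ge\tfrac12$, the new weight satisfies $1-\alpha\le\tfrac12$. Writing out the definition and using $\NORM{WZW}=\NORM{Z^{1/2}W^2Z^{1/2}}$, the bound $B^s\na_{1-\alpha}A\le I$ is equivalent to
\[
(B^{-s}\#A)^{2(1-\alpha)}\le B^{-s}.
\]
Now I would set $c':=\tfrac{\alpha}{1-\alpha}=\tfrac{t}{s(1-t)}$, so that $(B^{-s})^{c'}=B^{-t/(1-t)}$ and the linearized hypothesis reads \emph{exactly} $A\le (B^{-s})^{c'}$ (no power is raised here, it is an identity of exponents). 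Monotonicity of the geometric mean in its second variable then gives $B^{-s}\#A\le B^{-s}\#(B^{-s})^{c'}=(B^{-s})^{(1+c')/2}$, and raising to the power $2(1-\alpha)$ — legitimate by L\"owner--Heinz precisely because $2(1-\alpha)\le1$ — produces $(B^{-s}\#A)^{2(1-\alpha)}\le (B^{-s})^{(1+c')(1-\alpha)}=B^{-s}$, using the identity $(1+c')(1-\alpha)=1$. This is the required inequality, and transposing back completes (1).

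I expect the main obstacle to be conceptual rather than computational. In the original orientation one is forced to raise a geometric-mean inequality to the exponent $2\alpha\ge1$, which lies outside the operator-monotone range, so L\"owner--Heinz is unavailable and a direct estimate breaks down; attempts to recover the mean $A^{-1}\#B^s$ from the pure-power pivot via (Grand) Furuta run into a mismatch between the half-power built into $\#$ and the constraints $p,\,\beta\le 1$ of those inequalities. The whole point is that transposition converts the forbidden exponent $2\alpha$ into the admissible exponent $2(1-\alpha)\le1$; this is also why the threshold $s\le\tfrac{t}{1-t}$ (equivalently $\alpha\ge\tfrac12$) appears and is sharp for this method. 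Once the correct orientation is fixed, the remaining steps — the Grand Furuta linearization and the exponent bookkeeping $(1+c')(1-\alpha)=1$ — are routine.
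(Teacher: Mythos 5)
Your proof is correct, and it is essentially the paper's argument reflected through the transposition symmetry: the paper proves part (2) directly and then deduces (1) via $A\na_t B=B\na_{1-t}A$, whereas you prove (1) directly and deduce (2). In both orientations the ingredients are identical --- the Grand Furuta linearization of Theorem~\ref{key1} with $k=\frac12$, $L=1$ (i.e.\ \eqref{eq:see}) converts the hypothesis into a pure power inequality, and then monotonicity of $\#$ against a commuting power together with L\"owner--Heinz at the sub-unit exponent closes the estimate; your exponent bookkeeping $(1+c')(1-\alpha)=1$ with $2(1-\alpha)\le 1$ is the exact mirror of the paper's computation $\bigl(A^{-r}\#A^{-\frac{1-t}{t}}\bigr)^{\frac{2rt}{rt+1-t}}=A^{-r}$ with $\frac{2rt}{rt+1-t}\le 1$, and your threshold $s\le\frac{t}{1-t}\Leftrightarrow\alpha\ge\frac12$ matches the paper's $r\le\frac{1-t}{t}$. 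One point where your orientation is actually cleaner: you invoke \eqref{eq:see} only for $t\le\frac12$, where the Grand Furuta substitution of Theorem~\ref{key1} satisfies $p=\frac{1}{2t}\ge 1$, as required. The paper's direct proof of (2) quotes \eqref{eq:see} at $\frac12\le t<1$, where that substitution gives $p\le 1$ and lies outside the stated hypotheses of the Grand Furuta inequality; the paper then only uses the L\"owner--Heinz weakening $B\le A^{-\frac{1-t}{t}}$, which is precisely what your transposed application of the $t\le\frac12$ case delivers without any such issue.
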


\begin{proof}
 {Proof of (2).}
Suppose that $\frac{1}{2}\leq t {<} 1$. By putting $k=\frac{1}{2}$ (then $L=1$) in the proof of Theorem \ref{key1}, $A\natural_{t}B\leq I$ implies
$$ B^{\frac{t}{1-t}}\leq A^{-1}$$
by \eqref{eq:see}.
Since $\frac{1-t}{t} \leq 1$, we have
$$ B \leq A^{-\frac{1-t}{t}} {.}$$
Hence we have 
\begin{align*}
\NORM{A^r \na_{\frac{rt}{rt+1-t}} B} & = \NORM{A^{\frac{r}{2}}(A^{-r}\s B)^{\frac{2rt}{rt+1-t}}A^{\frac{r}{2}}} \\
& \leq \NORM{A^{\frac{r}{2}}(A^{-r}\s A^{\frac{t-1}{t}})^{\frac{2rt}{rt+1-t}}A^{\frac{r}{2}}} = 1,
\end{align*}
because $\frac{2rt}{rt+1-t}\leq 1 \Leftrightarrow \ rt \leq 1-t$. Therefore, we have (2).

 {Proof of (1).}
Suppose that $0<t\leq \frac{1}{2}$. Since $\frac{1}{2}\leq 1-t {<} 1$, it follows from (2) and the transposition property of the $(\frac{1}{2},t)$-spectral  {geometric} mean $\na_t$ that
\[
A \na_t B = B \na_{1-t} A \leq I \quad \Longrightarrow \quad B^s \na_{\frac{s(1-t)}{s(1-t)+t}} A = A \na_{\frac{t}{s(1-t)+t}} B^s \leq I
\]
for $0<s\leq \frac{t}{1-t}$. Hence we have (1).
\end{proof}

\begin{proof}[Proof of Theorem~\ref{thm-1}]  
 {Proof of  {(1)}.} Suppose that $0<t\leq \frac{1}{2}$. It follows from (2) of Lemma~\ref{thm-2} and $\frac{1}{2}\leq 1-t  {<} 1$ that
\[
A\na_t B = B \na_{1-t} A \leq I \quad \Longrightarrow \quad B^s \na_{\frac{s(1-t)}{s(1-t)+t}} A = A \na_{\frac{t}{s(1-t)+t}} B^s \leq I
\]
for $0<s\leq \frac{t}{1-t}$. Since $0<s\leq \frac{t}{1-t}$ implies $\frac{1}{2}\leq \frac{t}{s(1-t)+t}  {<} 1$, by using  {Lemma~\ref{thm-2} (2)} again, we have
\[
A^r \na_{\frac{rt}{rt+s(1-t)}} B^s \leq I
\]
for $0<r \leq \frac{1-\frac{t}{s(1-t)+t}}{\frac{t}{s(1-t)+t}} = \frac{s(1-t)}{t}$, and so we have (1).

 {Proof of  {(2)}.}
Suppose that $\frac{1}{2}\leq t  {<} 1$. It follows from $0<1-t\leq \frac{1}{2}$ and (1) that
\[
A\na_t B = B \na_{1-t} A \leq I \quad \Longrightarrow \quad B^s \na_{\frac{s(1-t)}{s(1-t)+rt}} A^r = A^r \na_{\frac{rt}{s(1-t)+rt}} B^s \leq I
\]
for $0<s\leq \frac{rt}{1-t}$ with $0<r\leq \frac{1-t}{t}$, and we have (2).
\end{proof}

\begin{pbm}
  Let $A, B\in \mathcal{B}(H)_{++}$, and  {$t\in (0,1)\setminus \{\frac{1}{2}\}$}.  
Then does  {\eqref{ando-hiai_ex}} hold for all $r,s\in (0,1]$?
\end{pbm}



\section{Log-majorization}

 In this section and beyond, we will deal with matrices rather than operators. Let ${\Bbb M}_n={\Bbb M}_n({\Bbb C})$ be the algebra of $n\times n$ complex matrices, and ${\Bbb P}_n={\Bbb P}_n({\Bbb C})$ the algebra of $n\times n$ complex positive definite matrices, and denote the matrix absolute value of any $A \in {\Bbb M}_n$ by $|A|=(A^*A)^{\frac{1}{2}}$. For positive semidefinite $A, B$ let us write $A \prec_{w(\log)}B$ and refer to the weak log-majorization if
\[
\prod_{i=1}^k \l_i(A) \leq \prod_{i=1}^k \l_i(B) \qquad \mbox{for $k=1,2,\ldots, n$,}
\]
 {where $\l_1(A)\geq \l_2(A)\geq \cdots \geq \l_n(A)$ and $\l_1(B)\geq \l_2(B)\geq \cdots \geq \l_n(B)$ are the eigenvalues of $A$ and $B$ respectively.} Further the log-majorization $A\prec_{(\log)} B$ means that $A \prec_{w(\log)} B$ and the equality holds for $k=n$ in the above, i.e., 
\[
\prod_{i=1}^n \l_i(A) = \prod_{i=1}^n \l_i(B) \qquad \mbox{i.e., $\det A = \det B$.}
\]
 It is known that for positive semidefinite $A, B$,
\begin{equation}
  {A \prec_{(\log)} B \ \Longrightarrow}\  A \prec_{w(\log)} B\  \Longrightarrow\  \UIN{A} \leq \UIN{B}
\label{eq:log-norm relation}
\end{equation}
for any unitarily invariant norm $\UIN{\cdot}$. \par
 For each $A\in {\Bbb M}_{n}$ and $i=1,\ldots,n$, let $C_i(A)$ denote the $i$-fold antisymmetric tensor power of $A$. See \cite{MO} for details. By the Binet-Cauchy theorem \cite[P. 123]{Zhang}, for every positive semidefinite $A$
\begin{equation} \label{eq:BC}
\prod_{i=1}^j \l_i(A) = \l_1(C_{j}(A)) \qquad \mbox{for $j=1,2,\ldots,n$}
\end{equation}
and it is known that $\l_1(C_j(A)) \leq \l_1(C_j(B))$ for all $j=1,\ldots,n$ if and only if $A \prec_{w(\log)} B$ for positive  {semidefinite} $A$ and $B$.\par
By Proposition~\ref{key1} and the anti-symmetric tensor technique \eqref{eq:BC}, we can get the following log-majorization  {relation}:
\begin{thm} \label{thm-lmF}
Let $A, B\in {\Bbb P}_n$ and $k,t\in (0,\frac{1}{2}]$. Let $L=1+2t-4kt$. Then
$F_{k,t}(A^q,B^q)^{\frac{1}{q}} \prec_{(\log)} F_{k,t}(A^p, B^p)^{\frac{1}{p}}$ for all $0<q \leq \frac{2ktL}{1-2kt} p$.
\end{thm}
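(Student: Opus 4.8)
The plan is to reduce the log-majorization statement to the norm version of the Ando--Hiai property (Theorem~\ref{key1}) applied to antisymmetric tensor powers. First I would recall the standard equivalence recorded in \eqref{eq:BC}: for positive semidefinite matrices, $A \prec_{w(\log)} B$ holds if and only if $\lambda_1(C_j(A)) \le \lambda_1(C_j(B))$ for all $j=1,\dots,n$, and since $\lambda_1(X) = \NORM{X}$ for positive semidefinite $X$, this is equivalent to $\NORM{C_j(A)} \le \NORM{C_j(B)}$ for every $j$. Thus the weak log-majorization $F_{k,t}(A^q,B^q)^{1/q} \prec_{w(\log)} F_{k,t}(A^p,B^p)^{1/p}$ will follow once I establish, for each fixed $j$, a norm inequality between the $j$-th antisymmetric tensor powers of the two sides.

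The key step is to show that $C_j$ commutes appropriately with the operation $F_{k,t}$. Since the antisymmetric tensor power is multiplicative ($C_j(XY)=C_j(X)C_j(Y)$), preserves positivity and inverses, and satisfies $C_j(X^s)=C_j(X)^s$ for positive $X$ and real $s$, one checks directly from the definition $F_{k,t}(A,B)=(A^{-1}\#_k B)^t A^L (A^{-1}\#_k B)^t$ together with the fact that $C_j$ commutes with the weighted geometric mean (because $\#_k$ is built from products, powers and inverses) that
\[
C_j\!\left(F_{k,t}(A,B)\right)=F_{k,t}\!\left(C_j(A),C_j(B)\right).
\]
Applying this with $A,B$ replaced by $A^q,B^q$ and by $A^p,B^p$, and using $C_j(X^{1/q})=C_j(X)^{1/q}$, I can rewrite the two tensor powers as $F_{k,t}(C_j(A)^q,C_j(B)^q)^{1/q}$ and $F_{k,t}(C_j(A)^p,C_j(B)^p)^{1/p}$.

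Next I would invoke the norm form of Theorem~\ref{key1}. By Proposition~\ref{prp-hom}, the Ando--Hiai property for $F_{k,t}$ with exponent $q'=\frac{2ktL}{1-2kt}$ is equivalent to the norm inequality $\NORM{F_{k,t}(X^{q'},Y^{q'})}\le \NORM{F_{k,t}(X,Y)}^{q'}$. Replacing $X,Y$ by $X^p,Y^p$ (and renaming) and using joint homogeneity, this upgrades for any $0<q\le q' p$ to a monotonicity statement of the form $\NORM{F_{k,t}(X^q,Y^q)^{1/q}}\le \NORM{F_{k,t}(X^p,Y^p)^{1/p}}$; I would write out the elementary rescaling that turns the single-exponent Ando--Hiai bound into this two-exponent monotone form. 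Applying it to $X=C_j(A)$, $Y=C_j(B)$ and combining with the tensor identity above yields $\NORM{C_j(F_{k,t}(A^q,B^q)^{1/q})}\le\NORM{C_j(F_{k,t}(A^p,B^p)^{1/p})}$ for every $j$, which is exactly the weak log-majorization.

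Finally, to promote weak log-majorization to genuine log-majorization I must verify equality of determinants, i.e.\ the case $k=n$ (equivalently $j=n$, where $C_n$ is the determinant). Using that $\det F_{k,t}(A,B)=(\det A)^{1-2kt}(\det B)^{2kt}$ — which follows from property~(1) of Proposition~\ref{prp-Fp} applied through the multiplicativity of the determinant, or directly from the definition — a short computation shows $\det F_{k,t}(A^q,B^q)^{1/q}=\det F_{k,t}(A^p,B^p)^{1/p}=(\det A)^{1-2kt}(\det B)^{2kt}$, independent of the exponent. The main obstacle I anticipate is the rescaling step: justifying carefully that the single-parameter Ando--Hiai inequality of Theorem~\ref{key1}, valid for $0<q\le q'$, extends to the monotone comparison of $F_{k,t}(X^q,Y^q)^{1/q}$ and $F_{k,t}(X^p,Y^p)^{1/p}$ for all $0<q\le q' p$; this requires setting $C=X^p$, $D=Y^p$ and applying the property with exponent $q/p\le q'$, then taking norms to the $1/q$ power and invoking homogeneity, and I would check the edge behavior of the exponents to make sure no sign or range condition is violated.
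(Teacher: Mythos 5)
Your proposal is correct and takes essentially the same route as the paper: it reduces the claim to a largest-eigenvalue (operator norm) comparison via the antisymmetric tensor identities $C_j(F_{k,t}(A,B))=F_{k,t}(C_j(A),C_j(B))$ and $C_j(F_{k,t}(A,B)^r)=F_{k,t}(C_j(A),C_j(B))^r$, checks the determinant equality $\det F_{k,t}(A,B)=(\det A)^{1-2kt}(\det B)^{2kt}$ to upgrade weak log-majorization to log-majorization, and invokes Theorem~\ref{key1} through Proposition~\ref{prp-hom} with the rescaling $r=q/p\le \frac{2ktL}{1-2kt}$ and the substitution $A\mapsto A^p$, $B\mapsto B^p$. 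This is exactly the paper's argument, so no further comparison is needed.
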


\begin{proof}
$\frac{2ktL}{1-2kt}\in (0,1]$ is shown in Theorem \ref{key1}.
Let $k,t\in (0,\frac{1}{2} {]}$ and $0<r\leq \frac{2ktL}{1-2kt}$. It follows that
for $i=1,\ldots,n$,
\begin{align*}
C_i(F_{k,t}(A^{r}, B^{r})) & =F_{k,t}(C_i(A)^{r}, C_i(B)^{r}), \\
C_i(F_{k,t}(A, B)^{r}) & =F_{k,t}(C_i(A), C_i(B))^{r}.
\end{align*}
Also,
\[
\det(F_{k,t}(A^{r}, B^{r})) = (\det(A)^{(k-1){r}}\det(B)^{k{r}})^{2t} \det(A)^{{r}L} = \det(F_{k,t}(A, B)^{r}).
\]
Hence it suffices by \eqref{eq:BC} to show that
\[
\l_1(F_{k,t}(A^{r}, B^{r})) = \NORM{F_{k,t}(A^{r}, B^{r})} \leq \NORM{F_{k,t}(A, B)^{r}} = \l_1(F_{k,t}(A, B)^{r}).
\]
By Proposition~\ref{prp-hom}, we may prove that 
\[
F_{k,t}(A, B)\leq I \quad {\Longrightarrow} \quad F_{k,t}(A^{r}, B^{r}) \leq I \quad \mbox{for all $0<{r}\leq \frac{2ktL}{1-2kt}$}.
\]
This is just Proposition~\ref{key1}, that is, we have $F_{k,t}(A^{r},B^{r}) \prec_{(\log)} F_{k,t}(A,B)^{r}$.
Put $r=\frac{q}{p}\in (0,\frac{2ktL}{1-2kt}]$. We have
\[
F_{k,t}(A^{\frac{q}{p}},B^{\frac{q}{p}}) \prec_{(\log)} F_{k,t}(A,B)^{\frac{q}{p}}.
\]
Replacing $A$ and $B$ by $A^p$ and $B^p$, the proof is completed.
\end{proof}

There is a gap in the proof of \cite[Theorem 3.3]{GT}, so we cannot say that the Ando-Hiai type inequality for the $(\frac{1}{2},t)$-spectral geometric mean holds for $0<q\leq 1$. However, as an application of Theorem~\ref{thm-lmF}, we show that it holds under certain restricted conditions, and it is a partial solution of the result of Gan-Tam \cite[Theorem 3.3]{GT}:

\begin{cor} \label{thm-ah}
Let $A, B\in{\Bbb P}_n$ and  {$t \in (0,1)$}.
\[
 (A^q\natural_t B^q)^{\frac{1}{q}}  \ \prec_{(\log)}\ (A^p \natural_t B^p)^{\frac{1}{p}} \quad\text{for all $ {0<} q\leq \min\{\frac{1-t}{t}, \frac{t}{1-t}\}p$,}
\]
%
In particular, if $t=\frac{1}{2}$, then
\[
(A^q \natural B^q)^{\frac{1}{q}} \ \prec_{(\log)}\ (A^{p} \natural B^p)^{\frac{1}{p}} \quad\text{for all  {$0< q\leq p$}.}
\]
\end{cor}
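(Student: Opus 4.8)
The plan is to recognize that this corollary is exactly the $k=\frac{1}{2}$ specialization of Theorem~\ref{thm-lmF}, combined with the transposition property of the $(\frac{1}{2},t)$-spectral geometric mean to extend the result over the full range $t\in(0,1)$. First I would record that $A\natural_t B=F_{\frac{1}{2},t}(A,B)$ and that, when $k=\frac{1}{2}$, the exponent $L=1+2t-4kt$ reduces to $L=1$, so that the admissible bound $\frac{2ktL}{1-2kt}$ appearing in Theorem~\ref{thm-lmF} becomes $\frac{t}{1-t}$.

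Next I would split into two cases. For $0<t\le\frac{1}{2}$, Theorem~\ref{thm-lmF} applies directly with $k=\frac{1}{2}$ and yields
\[
(A^q\natural_t B^q)^{\frac{1}{q}}\prec_{(\log)}(A^p\natural_t B^p)^{\frac{1}{p}}\qquad\text{for all }0<q\le\tfrac{t}{1-t}\,p,
\]
and since $\frac{t}{1-t}\le 1\le\frac{1-t}{t}$ in this range, the bound $\frac{t}{1-t}$ coincides with $\min\{\frac{1-t}{t},\frac{t}{1-t}\}$. For $\frac{1}{2}\le t<1$, I would invoke the transposition identity $A\natural_t B=B\natural_{1-t}A$, i.e.\ $F_{\frac{1}{2},t}(A,B)=F_{\frac{1}{2},1-t}(B,A)$, established earlier in the excerpt, noting that now $1-t\in(0,\frac{1}{2}]$. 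Applying Theorem~\ref{thm-lmF} to the pair $(B,A)$ with parameter $1-t$ gives
\[
(B^q\natural_{1-t}A^q)^{\frac{1}{q}}\prec_{(\log)}(B^p\natural_{1-t}A^p)^{\frac{1}{p}}\qquad\text{for all }0<q\le\tfrac{1-t}{t}\,p,
\]
and rewriting $B^q\natural_{1-t}A^q=A^q\natural_t B^q$ (and likewise for the $p$-th powers) via transposition converts this into the desired relation, with the bound $\frac{1-t}{t}=\min\{\frac{1-t}{t},\frac{t}{1-t}\}$ valid in this range.

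Finally I would merge the two cases: in either regime the admissible range is precisely $0<q\le\min\{\frac{1-t}{t},\frac{t}{1-t}\}\,p$, which is the asserted log-majorization; specializing at $t=\frac{1}{2}$ gives $\min\{1,1\}=1$, hence $0<q\le p$. I do not expect a genuine obstacle here, as the statement is a direct corollary of Theorem~\ref{thm-lmF}. The only points requiring care are verifying that the two case-dependent bounds $\frac{t}{1-t}$ and $\frac{1-t}{t}$ glue together to the single expression $\min\{\frac{1-t}{t},\frac{t}{1-t}\}$, and applying the transposition identity consistently to both the $q$- and $p$-powers.
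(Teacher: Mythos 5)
Your proposal is correct and follows essentially the same route as the paper: the paper's proof likewise specializes Theorem~\ref{thm-lmF} to $k=\frac{1}{2}$ (so $L=1$ and the bound $\frac{2ktL}{1-2kt}$ becomes $\frac{t}{1-t}$) and then invokes the transposition property $A\natural_t B = B\natural_{1-t}A$ to cover $\frac{1}{2}\le t<1$, exactly as in the proof of Theorem~\ref{thm-spg}. Your additional verification that the two case-dependent bounds glue to $\min\{\frac{1-t}{t},\frac{t}{1-t}\}$ is a correct and welcome elaboration of what the paper leaves implicit.
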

\begin{proof}
 {Proof is similar to the proof of Theorem \ref{thm-spg}.}
If we put $k=\frac{1}{2}$ in Theorem~\ref{thm-lmF}, then we have $L=1$ and by the transposition property of $\natural_t$, we have this corollary.
\end{proof}

If we put $t=\frac{1}{2}$ in Theorem~\ref{thm-lmF}, then we have the following corollary for the $(k,\frac{1}{2})$-spectral geometric mean:

\begin{cor} \label{thm-kah}
Let $A, B\in {\Bbb P}_n$ and $0<k\leq \frac{1}{2}$. Then
\[
(A^q \widetilde{\natural}_k B^q)^{\frac{1}{q}} \ \prec_{(\log)}\ (A^{p}\widetilde{\natural}_k B^{p})^{\frac{1}{p}} \quad \mbox{for all  {$0< q\leq 2kp$.}}
\]
\end{cor}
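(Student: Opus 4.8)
The plan is to obtain this corollary as the special case $t=\frac{1}{2}$ of Theorem~\ref{thm-lmF}. The defining identity $F_{k,\frac{1}{2}}(A,B)=A\,\widetilde{\natural}_k B$ means that the two-variable operator function appearing in Theorem~\ref{thm-lmF} already specializes to the $(k,\frac{1}{2})$-spectral geometric mean. Moreover, the hypothesis $0<k\le\frac{1}{2}$ together with the choice $t=\frac{1}{2}$ places $(k,t)$ inside the admissible region $(0,\frac{1}{2}]\times(0,\frac{1}{2}]$ demanded by Theorem~\ref{thm-lmF}, so that theorem applies with no further checking.

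First I would record the value of $L$ at $t=\frac{1}{2}$: substituting into $L=1+2t-4kt$ yields
\[
L=1+2\cdot\tfrac{1}{2}-4k\cdot\tfrac{1}{2}=2-2k=2(1-k).
\]
Next I would simplify the exponent bound $\frac{2ktL}{1-2kt}$ that controls the range of $q$ in Theorem~\ref{thm-lmF}. At $t=\frac{1}{2}$ this reads
\[
\frac{2k\cdot\frac{1}{2}\cdot 2(1-k)}{1-2k\cdot\frac{1}{2}}=\frac{2k(1-k)}{1-k}=2k,
\]
so the admissible range $0<q\le \frac{2ktL}{1-2kt}\,p$ collapses exactly to $0<q\le 2kp$.

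With these two computations in place, Theorem~\ref{thm-lmF} immediately gives $F_{k,\frac{1}{2}}(A^q,B^q)^{1/q}\prec_{(\log)}F_{k,\frac{1}{2}}(A^p,B^p)^{1/p}$ for all $0<q\le 2kp$, and rewriting $F_{k,\frac{1}{2}}$ as $\widetilde{\natural}_k$ produces precisely the claimed log-majorization. I do not expect any genuine obstacle here: the real work sits upstream in Theorem~\ref{thm-lmF}, which in turn rests on the Ando--Hiai property of Theorem~\ref{key1} transported through the antisymmetric tensor identities. Unlike Corollary~\ref{thm-ah}, no transposition argument is available here, since the $(k,\frac{1}{2})$-spectral geometric mean lacks the transposition symmetry, and none is needed; the only point requiring attention is to confirm that the substitution keeps the parameters within the hypotheses of Theorem~\ref{thm-lmF}, which the arithmetic above verifies.
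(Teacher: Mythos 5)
Your proposal is correct and follows essentially the same route as the paper, which likewise obtains the corollary by simply putting $t=\tfrac{1}{2}$ in Theorem~\ref{thm-lmF}, so that $L=2(1-k)$ and $\frac{2ktL}{1-2kt}=\frac{2k(1-k)}{1-k}=2k$, yielding the range $0<q\le 2kp$. Your observation that, unlike Corollary~\ref{thm-ah}, no transposition argument is used or available here also matches the paper's treatment.
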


We note that Corollaries \ref{thm-ah} and \ref{thm-kah} have been proven in \cite[Proposition 3.10]{Hpreprint2025_2}. However, the domain of $q$ in Corollary \ref{thm-kah} strictly contains the domain of $q$ in \cite[Proposition 3.10]{Hpreprint2025_2}.
 We show the following Lie-Trotter formula for the $(k,t)$-spectral geometric mean, which includes one for the $(k,\frac{1}{2})$-spectral geometric mean \cite[Theorem 3.1]{Dinh-Tam-Vuong}:

\begin{thm} \label{thm-LTF}
Let $A, B\in\mathcal{B}(H)$ be self-adjoint and $k,t\in (0,1]$. Then
\[
\lim_{p\to 0} F_{k,t}(\exp[pA],\exp[pB])^{\frac{1}{p}} = \exp \left((1-2kt)A+2kt B\right)
\]
in the norm topology.
\end{thm}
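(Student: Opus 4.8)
The plan is to pass to logarithms and compute the limit of $\tfrac{1}{p}\log F_{k,t}(\exp[pA],\exp[pB])$ as $p\to 0$, and then exponentiate. Write $A_p=\exp[pA]$, $B_p=\exp[pB]$ and
\[
C_p:=A_p^{-1}\#_k B_p=\exp[-pA]\,\#_k\,\exp[pB].
\]
Since $A,B$ are bounded, for $p$ near $0$ all of $A_p,B_p,C_p$ are positive invertible operators close to $I$, so their principal logarithms are well defined and $p\mapsto F_{k,t}(A_p,B_p)$ is a smooth operator-valued curve with $F_{k,t}(A_0,B_0)=F_{k,t}(I,I)=I$.

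First I would invoke the Lie--Trotter formula for the weighted geometric mean $\#_k$ referred to in the Introduction, which gives
\[
\lim_{p\to 0}C_p^{1/p}=\exp\bigl(-(1-k)A+kB\bigr),
\]
equivalently $\tfrac{1}{p}\log C_p\to -(1-k)A+kB$. Raising to the $t$-th power and using $\log(C_p^{t})=t\log C_p$ yields $\tfrac{1}{p}\log(C_p^{t})\to t\bigl(-(1-k)A+kB\bigr)$, while trivially $\tfrac{1}{p}\log(A_p^{L})=LA$. Next I would apply the elementary multi-factor principle: if $g_1(p),g_2(p),g_3(p)$ are positive invertible operators with $g_j(0)=I$ and $\tfrac{1}{p}\log g_j(p)\to M_j$, then $\tfrac{1}{p}\log\bigl(g_1g_2g_3\bigr)\to M_1+M_2+M_3$, the cross terms being $O(p^2)$ by a Baker--Campbell--Hausdorff expansion. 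Taking $g_1=g_3=C_p^{t}$ and $g_2=A_p^{L}$ and recalling $F_{k,t}(A_p,B_p)=C_p^{t}A_p^{L}C_p^{t}$ gives
\[
\frac{1}{p}\log F_{k,t}(A_p,B_p)\ \longrightarrow\ 2t\bigl(-(1-k)A+kB\bigr)+LA.
\]
Substituting $L=1+2t-4kt$, the coefficient of $A$ simplifies as $-2t(1-k)+1+2t-4kt=1-2kt$ and the coefficient of $B$ is $2kt$, so the right-hand side equals $(1-2kt)A+2ktB$. By continuity of $\exp$ this establishes the claimed limit.

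The main obstacle is the rigorous justification that the commutator corrections in the triple product contribute only at order $p^2$. Since $A$ and $B$ are bounded, the cleanest route avoids full BCH bookkeeping: one notes $F_{k,t}(A_p,B_p)=I+pM+o(p)$ where $M$ is the derivative of the curve at $p=0$, whence $\tfrac{1}{p}\log F_{k,t}(A_p,B_p)\to M$ directly. The value $M$ is then computed by the product rule at $p=0$ (all three factors equal $I$ there), using $\tfrac{d}{dp}C_p\big|_{0}=-(1-k)A+kB$ (the first-order behaviour of the geometric mean of two curves through $I$ is the weighted arithmetic mean) together with $\tfrac{d}{dp}C_p^{t}\big|_{0}=t\,\tfrac{d}{dp}C_p\big|_{0}$ since $C_0=I$. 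This reduces the entire argument to first-order Taylor expansions and reproduces $M=2t\bigl(-(1-k)A+kB\bigr)+LA=(1-2kt)A+2ktB$.
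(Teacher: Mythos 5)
Your proof is correct, and its final limiting step takes a genuinely different route from the paper's. Both arguments pivot on the same first-order Taylor expansion: the paper likewise expands the three factors to get $X(p):=F_{k,t}(e^{pA},e^{pB})=\bigl(I+pt(-(1-k)A+kB)+o(p)\bigr)\bigl(I+pLA+o(p)\bigr)\bigl(I+pt(-(1-k)A+kB)+o(p)\bigr)=I+p\bigl((1-2kt)A+2ktB\bigr)+o(p)$, exactly as in your last paragraph. Where you then pass to logarithms --- $\log\bigl(I+pM+o(p)\bigr)=pM+o(p)$ gives $\tfrac1p\log X(p)\to M$, and $X(p)^{1/p}=\exp\bigl(\tfrac1p\log X(p)\bigr)\to e^{M}$ by norm continuity of $\exp$ --- the paper instead runs a Lie--Trotter-type discretization (following Dinh--Tam--Vuong): it writes $p=\tfrac{1}{m+s}$ with $m\in\mathbb{N}$, $s\in(0,1)$, establishes a uniform bound on $\|X(p)\|^{m}$, shows $\|X(p)^{1/p}-X(p)^{m}\|\to 0$ and $\|Y(p)^{m}-Y(p)^{1/p}\|\to 0$ for $Y(p)=\exp\bigl[p((1-2kt)A+2ktB)\bigr]$, and closes with the telescoping estimate $\|X(p)^{m}-Y(p)^{m}\|\le mM^{m-1}\|X(p)-Y(p)\|=O(p)$. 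Your route is shorter, avoids all integer-power bookkeeping, and handles the two-sided limit $p\to 0$ automatically, whereas the paper's parametrization only treats $p\downarrow 0$. Two things to make explicit in a write-up: first, the step $C_p^{t}=I+pt(-(1-k)A+kB)+o(p)$ rests on Fr\'{e}chet differentiability of $X\mapsto X^{t}$ at $I$ (holomorphic functional calculus suffices, since $A,B$ are bounded and $C_p\to I$ in norm); the paper uses the same expansion silently, so you are no worse off, but it is the one analytic fact your "elementary" reduction consumes. Second, your opening sketch via the Lie--Trotter formula for $\#_k$ and a Baker--Campbell--Hausdorff multi-factor principle is dispensable and somewhat hand-wavy as stated; your final paragraph is the actual proof and should replace it, since the BCH cross-term estimate is precisely the bookkeeping the first-order expansion lets you avoid.
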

\begin{proof}
 {It} follows from a similar method of \cite[Theorem 3.1]{Dinh-Tam-Vuong}. Indeed, for $p\in (0,1)$, we express $p=\frac{1}{m+s}$, where $m\in {\Bbb N}$ and $s\in (0,1)$. Put
\[
X(p) = F_{k,t}(\exp[pA], \exp[pB]) \ \mbox{and} \ Y(p)=\exp[p((1-2kt)A+2kt B)],
\]
and $L=1+2t-4kt$. It follows from \eqref{eq:se} that
\begin{align*}
\NORM{\exp[-pA] \#_k \exp[pB]} & \leq \NORM{\exp[-p(1-k)A+pkB]} \\
& \leq \exp[p(1-k)\NORM{A}+pk\NORM{B}],
\end{align*}
and hence as $pm\leq 1$, we have
\begin{align*}
\NORM{X(p)}^m & = \NORM{(\exp[-pA] \#_k \exp[pB])^t \exp[pLA] (\exp[-pA] \#_k \exp[pB])^t}^m \\
& \leq \NORM{\exp[-pA] \#_k \exp[pB]}^{2tm} \NORM{\exp[pLA]}^m \\
& \leq \exp[pm((1+4t-6kt)\NORM{A}+2kt \NORM{B})] \\
& \leq \exp[(1+4t-6kt)\NORM{A}+2kt \NORM{B}] <+\infty.
\end{align*}
Therefore, it follows that
\[
\NORM{X(p)^{\frac{1}{p}}-X(p)^m} \leq \NORM{X(p)}^m \NORM{X(p)^s-I} \to 0\quad \mbox{as $p\to 0$.}
\]
Similarly, we have 
\[
\NORM{Y(p)^m - Y(p)^{\frac{1}{p}}} \to 0 \quad \mbox{as $p\to 0$.}
\]
Next, we have
\begin{align*}
X(p) & = (\exp[-pA] \#_k \exp[pB])^t \exp[pLA] (\exp[-pA] \#_k \exp[pB])^t \\
& = \left[ I+pt(-(1-k)A+k B)+o(p)\right] \left[ I+p(1+2t-4kt)A+o(p)\right]\\
& \qquad \qquad \qquad \qquad \times \left[ I+pt(-(1-k)A+k B)+o(p)\right] \\
& = I+p((1-2kt)A+2kt B)+o(p).
\end{align*}
Since $Y(p)=I+p((1-2kt)A+2ktB)+o(p)$, we have $\NORM{X(p)-Y(p)}\leq cp^2$ for some constant $c$, and so we have
\begin{align*}
\NORM{X(p)^m-Y(p)^m} & = \NORM{\sum_{j=1}^{m-1} X(p)^{m-1-j}(X(p)-Y(p))Y(p)^j} \\
& \leq mM^{m-1}\NORM{X(p)-Y(p)} \\
& \leq mM^{m-1}cp^2 \leq \frac{m}{m+s} M^{m-1} cp \to 0 \quad \mbox{as $p\to 0$},
\end{align*}
where $M=\max \{ \NORM{X(p)}, \NORM{Y(p)}\}$ and $M^{m-1}<\infty$. Therefore, we have
\begin{align*}
& \NORM{X(p)^{\frac{1}{p}}-Y(p)^{\frac{1}{p}}} \\
& \leq \NORM{X(p)^{\frac{1}{p}}-X(p)^m}+\NORM{X(p)^m-Y(p)^m}+\NORM{Y(p)^m-Y(p)^{\frac{1}{p}}} \\
& \to 0 \quad \mbox{as $p\to 0$.}
\end{align*}
\end{proof}
Combining Theorems \ref{thm-lmF} and \ref{thm-LTF}, we obtain the following  relation:
\begin{thm} \label{thm-lgF}
Let $A, B\in {\Bbb P}_n$ and $k,t\in (0,\frac{1}{2}]$. 
Then
\[
\exp\left((1-2kt)\log A + 2kt \log B\right) \prec_{(\log)} F_{k,t}(A^p,B^p)^{\frac{1}{p}} \quad \mbox{for all $p>0$}.
\]
\end{thm}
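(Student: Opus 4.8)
The plan is to combine the log-monotonicity of Theorem~\ref{thm-lmF} with the Lie--Trotter limit of Theorem~\ref{thm-LTF}, and then to pass the resulting log-majorization relation to the limit using the continuity of the eigenvalues. First I would fix $p>0$ and set $c=\frac{2ktL}{1-2kt}$, which lies in $(0,1]$ by Theorem~\ref{key1}. For every $q$ with $0<q\le cp$, Theorem~\ref{thm-lmF} gives
\[
F_{k,t}(A^q,B^q)^{\frac{1}{q}}\ \prec_{(\log)}\ F_{k,t}(A^p,B^p)^{\frac{1}{p}};
\]
in particular this holds for all sufficiently small $q>0$, while the right-hand side stays fixed.

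Next I would let $q\downarrow 0$. Writing $A=\exp(\log A)$ and $B=\exp(\log B)$ with $\log A,\log B$ self-adjoint, Theorem~\ref{thm-LTF} (applied to the self-adjoint matrices $\log A$ and $\log B$) yields
\[
\lim_{q\downarrow 0}F_{k,t}(A^q,B^q)^{\frac{1}{q}}=\exp\big((1-2kt)\log A+2kt\log B\big)
\]
in the operator norm.

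The only delicate point is that log-majorization must survive this limit. Since the convergence is in norm and every matrix in sight is positive definite, the ordered eigenvalues $\lambda_1\ge\cdots\ge\lambda_n$ depend continuously on the matrix, so for each $j=1,\dots,n$ the product $\prod_{i=1}^{j}\lambda_i$ passes to the limit. Taking the limit in the weak part of the relation above gives
\[
\prod_{i=1}^{j}\lambda_i\Big(\exp\big((1-2kt)\log A+2kt\log B\big)\Big)\le\prod_{i=1}^{j}\lambda_i\big(F_{k,t}(A^p,B^p)^{\frac{1}{p}}\big)
\]
for all $j$, which is the weak log-majorization. To upgrade it to full log-majorization I would check the determinant equality separately: the computation in the proof of Theorem~\ref{thm-lmF} shows $\det F_{k,t}(A^q,B^q)^{1/q}=(\det A)^{1-2kt}(\det B)^{2kt}$ for every $q>0$, and since $\det\exp\big((1-2kt)\log A+2kt\log B\big)=(\det A)^{1-2kt}(\det B)^{2kt}$ as well, the determinants of the two sides coincide. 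This completes the argument.

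I expect the main obstacle to be precisely this limiting step: one must confirm that $\prec_{(\log)}$, being a condition on finitely many products of eigenvalues together with an equality of determinants, is genuinely closed under norm convergence. This rests on the continuity of the ordered spectrum of positive definite matrices and on the fact that the determinant is constant along the family $q\mapsto F_{k,t}(A^q,B^q)^{1/q}$, so that both the inequalities and the determinant constraint are preserved in the limit.
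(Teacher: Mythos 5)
Your proposal is correct and is exactly the paper's (implicit) argument: the paper states Theorem~\ref{thm-lgF} simply as the combination of Theorems~\ref{thm-lmF} and \ref{thm-LTF}, i.e.\ fix $p>0$, apply the log-majorization $F_{k,t}(A^q,B^q)^{1/q}\prec_{(\log)}F_{k,t}(A^p,B^p)^{1/p}$ for small $q$, and let $q\downarrow 0$ via the Lie--Trotter formula. Your explicit verification of the limiting step --- continuity of the ordered eigenvalues for the weak part, plus the determinant identity $\det F_{k,t}(A^q,B^q)^{1/q}=(\det A)^{1-2kt}(\det B)^{2kt}$ matching $\det\exp\left((1-2kt)\log A+2kt\log B\right)$ --- correctly fills in the details the paper leaves unstated.
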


We show the log-majorization relations among geometric means:
\begin{thm} \label{thm-lmiF}
Let $A, B\in {\Bbb P}_n$. 
\begin{itemize}
\item[(1)] If $k,t\in (0,\frac{1}{2}]$, then 
\begin{align*}
A \#_{2kt} B & \prec_{(\log)} \exp [ (1-2kt)\log A + 2kt \log B ]  \\
& \prec_{(\log)} \left( A^{\frac{(1-2kt)p}{2}}B^{2ktp}A^{\frac{(1-2kt)p}{2}}\right)^{\frac{1}{p}} \prec_{(\log)} \left( A^{\frac{p}{2}}B^{\frac{2ktp}{1-2kt}}A^{\frac{p}{2}}\right)^{\frac{1-2kt}{p}}\\
& \prec_{(\log)} F_{k,t}(A^p, B^p)^{\frac{1}{p}} \prec_{(\log)} \left( A^{\frac{(1-2kt)p}{4kt}}B^pA^{\frac{(1-2kt)p}{4kt}}\right)^{\frac{2kt}{p}}
\end{align*}
for all $p>0$.
\item[(2)] If $k,t\in [\frac{1}{2},1)$, 
then
\begin{align*}
& B \#_{2(1-k)(1-t)} A \\
& \prec_{(\log)} \exp [ 2(1-k)(1-t)\log A + (1-2(1-k)(1-t)) \log B ]  \\
& \prec_{(\log)} \left( A^{(1-k)(1-t)p}B^{(1-2(1-k)(1-t))p}A^{(1-k)(1-t)p}\right)^{\frac{1}{p}} \\
& \prec_{(\log)} \left( A^{\frac{(1-k)(1-t)p}{1-2(1-k)(1-t)}}B^{p}A^{\frac{(1-k)(1-t)p}{1-2(1-k)(1-t)}}\right)^{\frac{1-2(1-k)(1-t)}{p}}\\
& \prec_{(\log)} F_{1-k,1-t}(B^p, A^p)^{\frac{1}{p}} \prec_{(\log)} \left( A^{\frac{p}{2}}B^{\frac{(1-2(1-k)(1-t))p}{2(1-k)(1-t)}}A^{\frac{p}{2}}\right)^{\frac{2(1-k)(1-t)}{p}}
\end{align*}
for all $p>0$.
\end{itemize}
\end{thm}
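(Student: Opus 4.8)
The plan is to fix $p>0$ and verify the chain in (1) link by link, using transitivity of $\prec_{(\log)}$; throughout write $s:=2kt\in(0,\tfrac12]$ and recall $L=1+2t-4kt\ge 1$. A direct determinant computation (for the $F_{k,t}$-term exactly as in the proof of Theorem~\ref{thm-lmF}) shows that each of the six terms has determinant $(\det A)^{1-s}(\det B)^{s}$; hence for every consecutive pair it suffices to establish the weak log-majorization $\prec_{w(\log)}$, the full-product equality being guaranteed by this determinant count. I abbreviate the six terms of (1), in order, as $T_1,\dots,T_6$, and split the chain into a ``classical'' part $T_1\prec_{(\log)}T_2\prec_{(\log)}T_3\prec_{(\log)}T_4$ (not involving $F_{k,t}$) and the two genuinely new links $T_4\prec_{(\log)}T_5\prec_{(\log)}T_6$ around the $(k,t)$-spectral geometric mean.

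For the classical part, the first link $A\#_{s}B\prec_{(\log)}\exp[(1-s)\log A+s\log B]$ is the Ando--Hiai log-majorization for the weighted geometric mean \cite{AH1994}. For $T_2\prec T_3\prec T_4$ I would package all three terms as values of a single curve: putting $X:=A^{1-s}$, $Y:=B^{s}$ and $\gamma:=\tfrac1{1-s}\ge 1$, one checks the identities
\[
T_2=\exp(\log X+\log Y),\quad T_3=\Phi(p),\quad T_4=\Phi(\gamma p),\qquad \Phi(r):=\big(X^{r/2}Y^{r}X^{r/2}\big)^{1/r},
\]
the equalities for $T_3,T_4$ using $(1-s)\gamma=1$ so that $X^{\gamma p/2}=A^{p/2}$ and $Y^{\gamma p}=B^{sp/(1-s)}$. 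Since the Araki--Lieb--Thirring log-majorization \cite{AH1994} asserts that $r\mapsto\Phi(r)$ is non-decreasing in $\prec_{(\log)}$ on $(0,\infty)$ and decreases to $\exp(\log X+\log Y)$ as $r\downarrow 0$, evaluating at $0<p\le\gamma p$ yields $T_2\prec_{(\log)}T_3\prec_{(\log)}T_4$ at one stroke.

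The two new links reduce, exactly as in the proof of Theorem~\ref{thm-lmF}, to Ando--Hiai-type implications via joint homogeneity and the antisymmetric tensor identity \eqref{eq:BC}: since $T_4,T_5,T_6$ are jointly homogeneous of degree $1$ and intertwine each $C_j$, it is enough to prove, for all $A,B>0$,
\[
T_5\le I\ \Rightarrow\ T_4\le I \qquad\text{and}\qquad T_6\le I\ \Rightarrow\ T_5\le I .
\]
For $T_4\prec T_5$, the hypothesis $F_{k,t}(A^{p},B^{p})\le I$ feeds into the Grand Furuta computation \eqref{eq:see} (applied to $A^{p},B^{p}$) to give $B^{pLs/(1-s)}\le A^{-pL}$; since $L\ge 1$, operator monotonicity of $x\mapsto x^{1/L}$ yields $B^{ps/(1-s)}\le A^{-p}$, i.e. $A^{p/2}B^{ps/(1-s)}A^{p/2}\le I$, which is $T_4\le I$. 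For $T_5\prec T_6$, the hypothesis $T_6\le I$ reads $B^{p}\le A^{-p(1-s)/s}$, so by monotonicity of $\#_{k}$ and of $x\mapsto x^{2t}$ (valid since $2t\le 1$),
\[
(A^{-p}\#_{k}B^{p})^{2t}\le\big(A^{-p}\#_{k}A^{-p(1-s)/s}\big)^{2t}=A^{-pL},
\]
which is exactly $F_{k,t}(A^{p},B^{p})=T_5^{\,p}\le I$; the middle equality rests on the scalar identity $(1-k)+k(1-s)/s=L/2t$. Collecting the five links proves (1).

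Finally, (2) follows from (1) by duality. For $k,t\in[\tfrac12,1)$ one has $1-k,1-t\in(0,\tfrac12]$, so (1) applies with parameters $(k,t)$ and variables $(A,B)$ replaced by $(1-k,1-t)$ and $(B,A)$; its fifth term then becomes $F_{1-k,1-t}(B^{p},A^{p})^{1/p}$, matching the central term of (2). Each intermediate term produced this way has the same eigenvalue list as the correspondingly displayed ``$A$-first'' term of (2), because $\mathrm{sp}(X^{a}Y^{b}X^{a})=\mathrm{sp}(Y^{b/2}X^{2a}Y^{b/2})$ for $X,Y>0$; since operators with identical eigenvalues are $\prec_{(\log)}$-equivalent, the displayed forms may be substituted freely. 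I expect the main obstacle to be the link $T_4\prec_{(\log)}T_5$: the exponent bookkeeping that turns the Grand Furuta output \eqref{eq:see} into precisely $B^{ps/(1-s)}\le A^{-p}$, together with the observation that $L\ge 1$ is exactly what makes the final root $x\mapsto x^{1/L}$ operator monotone, is the one place where the special structure of $F_{k,t}$ and the value of $L$ are essential.
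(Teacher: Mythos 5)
Your proposal is correct, and for the substantive links --- $T_4\prec_{(\log)}T_5$, $T_5\prec_{(\log)}T_6$, and the reduction of (2) to (1) by the substitution $(k,t,A,B)\mapsto(1-k,1-t,B,A)$ --- it coincides with the paper's proof: the same joint-homogeneity/antisymmetric-tensor scheme from Theorem~\ref{thm-lmF}, the same use of \eqref{eq:see} together with $L\ge 1$ and L\"owner--Heinz to extract $B^{ps/(1-s)}\le A^{-p}$, and the same scalar computation $A^{-p}\#_k A^{-p(1-s)/s}=A^{-pL/(2t)}$ for the last link. The only divergence is in the classical part of the chain: the paper gets $T_1\prec_{(\log)}T_2\prec_{(\log)}T_3$ directly from \cite{AH1994} and proves $T_3\prec_{(\log)}T_4$ by one more order-implication ($B^{sp/(1-s)}\le A^{-p}\Rightarrow B^{sp}\le A^{-(1-s)p}$, L\"owner--Heinz with exponent $1-s\in[\tfrac12,1)$) fed into the tensor technique, whereas you package $T_2\prec_{(\log)}T_3\prec_{(\log)}T_4$ as monotonicity in $r$ of $\Phi(r)=(X^{r/2}Y^{r}X^{r/2})^{1/r}$ with $X=A^{1-s}$, $Y=B^{s}$, $\gamma=1/(1-s)$, i.e.\ the Araki--Lieb--Thirring form, also available in \cite{AH1994}. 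These are well-known equivalent routes; yours disposes of two links at one stroke and makes explicit two points the paper leaves implicit --- the common determinant $(\det A)^{1-s}(\det B)^{s}$ of all six terms, and, in part (2), the eigenvalue identification of $X^{a}Y^{b}X^{a}$ with $Y^{b/2}X^{2a}Y^{b/2}$ needed to pass from the substituted chain to the displayed ``$A$-first'' forms --- while the paper's version stays uniformly within the single Ando--Hiai-implication template used throughout Section~5.
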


\begin{proof}
(1): Since $0<2kt\leq \frac{1}{2}$, the first and second log-majorization relations 
follow from \cite{AH1994}. For the third log-majorization relation, 
it suffices to prove that
\[
A^{\frac{p}{2}}B^{\frac{2ktp}{1-2kt}}A^{\frac{p}{2}}\leq I \quad \Longrightarrow \quad A^{\frac{(1-2kt)p}{2}}B^{2ktp}A^{\frac{(1-2kt)p}{2}} \leq I.
\]
If $A^{\frac{p}{2}}B^{\frac{2ktp}{1-2kt}}A^{\frac{p}{2}}\leq I$ or  $B^{\frac{2ktp}{1-2kt}}\leq A^{-p}$, then it follows from  {$\frac{1}{2}\leq 1-2kt < 1$} and L\"{o}wner-Heinz theorem that $B^{2ktp}\leq A^{-p(1-2kt)}$, and thus $A^{\frac{(1-2kt)p}{2}}B^{2ktp}A^{\frac{(1-2kt)p}{2}} \leq I$.\par
For the fourth log-majorization relation, 
it suffices to prove that
\[
F_{k,t}(A^p, B^p) \leq I \quad \Longrightarrow \quad A^{\frac{p}{2}}B^{\frac{2ktp}{1-2kt}}A^{\frac{p}{2}} \leq I.
\]
If $F_{k,t}(A^p, B^p) \leq I$ or $(A^{-p} \#_k B^p)^{2t} \leq A^{-pL}$, then it follows from \eqref{eq:see} 
that $B^{\frac{2ktpL}{1-2kt}} \leq A^{-pL}$ and so $B^{\frac{2ktp}{1-2kt}} \leq A^{-p}$ by $L\geq 1$. Hence we have $A^{\frac{p}{2}}B^{\frac{2ktp}{1-2kt}}A^{\frac{p}{2}} \leq I$.\par
 For the fifth log-majorization relation, 
it suffices to prove that
\[
A^{\frac{(1-2kt)p}{4kt}}B^pA^{\frac{(1-2kt)p}{4kt}} \leq I \quad \Longrightarrow \quad F_{k,t}(A^p, B^p) \leq I.
\]
If $A^{\frac{(1-2kt)p}{4kt}}B^pA^{\frac{(1-2kt)p}{4kt}} \leq I$ or $B^p \leq A^{\frac{(2kt-1)p}{2kt}}$, then we have
\[
(A^{-p} \#_k B^p)^{2t} \leq (A^{-p} \#_k A^{\frac{(2kt-1)p}{2kt}})^{2t} = A^{-pL}
\]
and $F_{k,t}(A^p, B^p) \leq \NORM{F_{k,t}(A^p, B^p)} = \NORM{A^{\frac{pL}{2}}(A^{-p} \#_k B^p)^{2t} A^{\frac{pL}{2}}} \leq I$, and so we have the desiblack inequality (1).\par
For (2), if  {$k,t\in [\frac{1}{2}, 1)$}, then $1-k,1-t\in (0,\frac{1}{2}]$ and thus (2) follows from (1).
\end{proof}

Gan-Tam in \cite[Theorem 3.6]{GT} showed the following log-majorization relation for the $(\frac{1}{2},t)$-spectral geometric mean: For $t \in [0,1]$,
\begin{equation} \label{eq:GT36}
\left( A^{\frac{(1-t)p}{2}}B^{tp}A^{\frac{(1-t)p}{2}}\right)^{\frac{1}{p}} \prec_{(\log)} \left( A^p \na_t B^p\right)^{\frac{1}{p}}
\end{equation}
for all  {$A,B\in \mathcal{B}(H)_{++}$} and $p>0$. In particular,
\[
A \#_t B \prec_{(\log)} \exp[(1-t)\log A+t \log B] \prec_{(\log)} A^{\frac{1-t}{2}}B^tA^{\frac{1-t}{2}} \prec_{(\log)} A \na_t B.
\]
As an application of Theorem~\ref{thm-lmiF}, if we put $k=\frac{1}{2}$, then we obtain the following refinement of \eqref{eq:GT36}:
\begin{cor} \label{cor-na}
Let $A,B\in {\Bbb P}_n$. 
\begin{itemize}
\item[(1)] If $0<t \leq \frac{1}{2}$, then 
\begin{align*}
A \#_{t} B & \prec_{(\log)} \exp [ (1-t)\log A + t \log B ]  \\
& \prec_{(\log)} \left( A^{\frac{(1-t)p}{2}}B^{tp}A^{\frac{(1-t)p}{2}}\right)^{\frac{1}{p}} \prec_{(\log)} \left( A^{\frac{p}{2}}B^{\frac{tp}{1-t}}A^{\frac{p}{2}}\right)^{\frac{1-t}{p}}\\
& \prec_{(\log)} (A^p \na_t B^p)^{\frac{1}{p}} \prec_{(\log)} \left( A^{\frac{(1-t)p}{2t}}B^pA^{\frac{(1-t)p}{2t}}\right)^{\frac{t}{p}}
\end{align*}
for all $p>0$.
\item[(2)] If  {$\frac{1}{2}\leq t < 1$}, then
\begin{align*}
A \#_{t} B & \prec_{(\log)} \exp [ (1-t)\log A + t \log B ]  \\
& \prec_{(\log)} \left( A^{\frac{(1-t)p}{2}}B^{tp}A^{\frac{(1-t)p}{2}}\right)^{\frac{1}{p}} \prec_{(\log)} \left( A^{\frac{(1-t)p}{2t}}B^{p}A^{\frac{(1-t)p}{2t}}\right)^{\frac{t}{p}}\\
& \prec_{(\log)} (A^p \na_t B^p)^{\frac{1}{p}} \prec_{(\log)} \left( A^{\frac{p}{2}}B^{\frac{tp}{1-t}}A^{\frac{p}{2}}\right)^{\frac{1-t}{p}}
\end{align*}
for all $p>0$.
\end{itemize}
\end{cor}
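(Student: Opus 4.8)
The plan is to derive both parts of Corollary~\ref{cor-na} as the specialization $k=\tfrac{1}{2}$ of Theorem~\ref{thm-lmiF}. At $k=\tfrac{1}{2}$ the parameter $L=1+2t-4kt$ reduces to $L=1$, the mixing weight $2kt$ becomes $t$, and $F_{\frac{1}{2},t}(\cdot,\cdot)=\natural_t$ by definition. Part~(1) will then be immediate, while part~(2) will additionally require the two transposition identities $A\#_tB=B\#_{1-t}A$ for the weighted geometric mean and $A\natural_tB=B\natural_{1-t}A$ for the $(\tfrac{1}{2},t)$-spectral geometric mean recorded earlier from \cite[Proposition 4.2]{Lee and Lim}.

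For part~(1), I would assume $0<t\le\tfrac{1}{2}$ and put $k=\tfrac{1}{2}$ in Theorem~\ref{thm-lmiF}(1). Substituting $2kt=t$ and $L=1$ throughout the five log-majorization relations rewrites $A\#_{2kt}B$ as $A\#_tB$, the term $F_{k,t}(A^p,B^p)^{1/p}$ as $(A^p\natural_tB^p)^{1/p}$, and the three intermediate sandwich expressions into exactly those displayed in part~(1). No transposition is needed here, since Theorem~\ref{thm-lmiF}(1) is already written in the required orientation.

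For part~(2), I would assume $\tfrac{1}{2}\le t<1$, so that $1-t\in(0,\tfrac{1}{2}]$, and apply Theorem~\ref{thm-lmiF}(2) with $k=\tfrac{1}{2}$. Here $(1-k)(1-t)=\tfrac{1-t}{2}$, whence $2(1-k)(1-t)=1-t$ and $1-2(1-k)(1-t)=t$. I would then translate the two endpoints back to the variables of the corollary: the leftmost term $B\#_{2(1-k)(1-t)}A=B\#_{1-t}A$ equals $A\#_tB$ by transposition of $\#$, and the spectral term $F_{\frac{1}{2},1-t}(B^p,A^p)^{1/p}=(B^p\natural_{1-t}A^p)^{1/p}$ equals $(A^p\natural_tB^p)^{1/p}$ by transposition of $\natural$. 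Inserting the simplified exponents into the remaining three sandwich terms will reproduce the chain stated in part~(2).

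Because Theorem~\ref{thm-lmiF} is already established, I expect no genuine analytic obstacle; the work is entirely bookkeeping. The single point demanding care is the exponent accounting in part~(2): transposition exchanges the roles of $A$ and $B$, which swaps the order of the two sandwich-type terms relative to part~(1)---the term $\bigl(A^{\frac{(1-t)p}{2t}}B^pA^{\frac{(1-t)p}{2t}}\bigr)^{t/p}$ now precedes the $\natural_t$-term, while $\bigl(A^{\frac{p}{2}}B^{\frac{tp}{1-t}}A^{\frac{p}{2}}\bigr)^{\frac{1-t}{p}}$ follows it. I will need to confirm that each exponent is attached to the correct factor after the substitution.
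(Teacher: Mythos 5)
Your proposal is correct and matches the paper's own route exactly: the corollary is presented there precisely as the specialization $k=\tfrac{1}{2}$ of Theorem~\ref{thm-lmiF}, with part (2) obtained from Theorem~\ref{thm-lmiF}(2) via the transposition identities $B\#_{1-t}A=A\#_tB$ and $B^p\natural_{1-t}A^p=A^p\natural_tB^p$, just as you describe. Your exponent bookkeeping (e.g.\ $2(1-k)(1-t)=1-t$ and $1-2(1-k)(1-t)=t$ at $k=\tfrac{1}{2}$, and the swapped positions of the two sandwich terms in part (2)) checks out.
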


\begin{rmk}
In the case of $p=1$, the last relation in (1) of Corollary~\ref{cor-na} is shown in \cite[Theorem 4.8]{FS2025}.
\end{rmk}

As an application of Theorem~\ref{thm-lmiF}, if we put $t=\frac{1}{2}$, then we obtain the following log-majorization relation for the $(k,\frac{1}{2})$-spectral geometric mean:
\begin{cor}\label{cor: tilde}
Let $A,B\in {\Bbb P}_n$. 
\begin{itemize}
\item[(1)] If $0<k \leq \frac{1}{2}$, then 
\[
\left( A^{\frac{p}{2}}B^{\frac{kp}{1-k}}A^{\frac{p}{2}}\right)^{\frac{1-k}{p}} \prec_{(\log)} (A^p \widetilde{\natural}_k B^p)^{\frac{1}{p}} \prec_{(\log)} \left( A^{\frac{(1-k)p}{2k}}B^pA^{\frac{(1-k)p}{2k}}\right)^{\frac{k}{p}}
\]
for all $p>0$.
\item[(2)] If  {$\frac{1}{2}\leq t < 1$}, then
\[
\left( A^{\frac{(1-k)p}{2k}}B^{p}A^{\frac{(1-k)p}{2k}}\right)^{\frac{k}{p}}
 \prec_{(\log)} (A^p \widetilde{\natural}_k B^p)^{\frac{1}{p}} \prec_{(\log)} \left( A^{\frac{p}{2}}B^{\frac{kp}{1-k}}A^{\frac{p}{2}}\right)^{\frac{1-k}{p}}
\]
for all $p>0$. 
\end{itemize}
\end{cor}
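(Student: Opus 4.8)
The plan is to obtain both parts as the $t=\tfrac12$ specialisations of Theorem~\ref{thm-lmiF}, using $F_{k,\frac12}(A,B)=A\,\widetilde{\natural}_k\,B$ and the bookkeeping $2kt=k$, $1-2kt=1-k$, $L=1+2t-4kt=2(1-k)$ that holds when $t=\tfrac12$. Under this substitution each long chain in Theorem~\ref{thm-lmiF} collapses, and the only two relations I need are the two links straddling the spectral-mean term.

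For part~(1), assume $0<k\le\tfrac12$ and set $t=\tfrac12$ in Theorem~\ref{thm-lmiF}(1). Then $2kt=k\le\tfrac12$, the middle term is $F_{k,\frac12}(A^p,B^p)^{1/p}=(A^p\,\widetilde{\natural}_k\,B^p)^{1/p}$, and its two neighbours $\bigl(A^{p/2}B^{\frac{2ktp}{1-2kt}}A^{p/2}\bigr)^{\frac{1-2kt}{p}}$ and $\bigl(A^{\frac{(1-2kt)p}{4kt}}B^pA^{\frac{(1-2kt)p}{4kt}}\bigr)^{\frac{2kt}{p}}$ become exactly $\bigl(A^{p/2}B^{\frac{kp}{1-k}}A^{p/2}\bigr)^{\frac{1-k}{p}}$ and $\bigl(A^{\frac{(1-k)p}{2k}}B^pA^{\frac{(1-k)p}{2k}}\bigr)^{\frac{k}{p}}$. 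Reading off these two links gives part~(1) with no further work, since here the spectral-mean term produced by the theorem is already the one in the corollary.

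For part~(2), assume $\tfrac12\le k<1$ and set $t=\tfrac12$ in Theorem~\ref{thm-lmiF}(2). Now $1-k\in(0,\tfrac12]$ and $2(1-k)(1-t)=1-k$, $1-2(1-k)(1-t)=k$, so the two links straddling the spectral-mean term have outer terms $\bigl(A^{\frac{(1-k)p}{2k}}B^pA^{\frac{(1-k)p}{2k}}\bigr)^{\frac{k}{p}}$ and $\bigl(A^{p/2}B^{\frac{kp}{1-k}}A^{p/2}\bigr)^{\frac{1-k}{p}}$, matching the outer terms displayed in part~(2). The middle term delivered by the theorem, however, is $F_{1-k,\frac12}(B^p,A^p)^{1/p}$, whereas part~(2) as worded carries $(A^p\,\widetilde{\natural}_k\,B^p)^{1/p}=F_{k,\frac12}(A^p,B^p)^{1/p}$. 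The remaining task is therefore to show that $F_{k,\frac12}(A^p,B^p)$ may replace $F_{1-k,\frac12}(B^p,A^p)$ inside the log-majorization chain, i.e.\ that the two spectral means are log-majorization equivalent. I would approach this through the structural relations of Proposition~\ref{prp-Fp}: the self-duality (4), the shared factor $A^{-L}\#F_{k,t}(A,B)=F_{1-k,t}(B,A)^{-1}\#B^{L}=(A^{-1}\#_k B)^{t}$ of (5), and the factorisations $F_{k,t}(A,B)=G_{k,t}A^{L}G_{k,t}$, $F_{1-k,t}(B,A)=G_{k,t}^{-1}B^{L}G_{k,t}^{-1}$ of (6), together with the positive-similarity result of Section~2. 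At the level of determinants this is painless: after $t=\tfrac12$ both means have determinant $\det(A)^{(1-k)p}\det(B)^{kp}$, so the $j=n$ equality in the log-majorization is automatic.

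The hard part will be precisely this interchange of the two spectral-mean terms, and I expect it to be the decisive step. Because the $(k,\tfrac12)$-spectral geometric mean fails the transposition property (Dinh--Tam--Vuong, \cite[Remark~2.3]{Dinh-Tam-Vuong}), $A^p\,\widetilde{\natural}_k\,B^p$ and $B^p\,\widetilde{\natural}_{1-k}\,A^p$ are not equal as operators, so the identification can only be spectral and must be pushed through every antisymmetric-tensor level. Concretely, I would use the Binet--Cauchy technique \eqref{eq:BC} exactly as in Section~5 to reduce the equivalence to the family of norm comparisons $\NORM{C_j(F_{k,\frac12}(A^p,B^p))}$ versus $\NORM{C_j(F_{1-k,\frac12}(B^p,A^p))}$ for $j=1,\dots,n$, and it is this comparison --- rather than the substitution, which is routine --- on which the stated form of part~(2) ultimately rests.
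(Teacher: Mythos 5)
Your part (1) is sound and is exactly the paper's argument: the paper offers no proof beyond the instruction to put $t=\tfrac12$ into Theorem~\ref{thm-lmiF}, and for $k\in(0,\tfrac12]$ the substitution $2kt=k$, $L=2(1-k)$ does collapse the chain of Theorem~\ref{thm-lmiF}(1) to the two displayed links. You are also more careful than the paper on part (2): you correctly observe that with $t=\tfrac12$ Theorem~\ref{thm-lmiF}(2) delivers the middle term $F_{1-k,\frac12}(B^p,A^p)^{1/p}=(B^p\,\widetilde{\natural}_{1-k}\,A^p)^{1/p}$, not $(A^p\,\widetilde{\natural}_{k}\,B^p)^{1/p}$, and that the failure of the transposition property makes these different operators; the paper passes over this mismatch in silence (and its hypothesis ``$\frac12\le t<1$'' should read ``$\frac12\le k<1$'').

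The genuine gap is your proposed repair, which you leave unproven and which is in fact impossible. ``Log-majorization equivalence'' means mutual log-majorization, and since the partial products $\prod_{i\le j}\lambda_i$ determine each $\lambda_j$, it would force the two middle terms to have identical spectra; they do not. Using $\lambda\bigl(A\,\widetilde{\natural}_k\,B\bigr)=\lambda\bigl((A^{1/2}BA^{1/2})^{k}A^{1-2k}\bigr)$ and $\lambda\bigl(B\,\widetilde{\natural}_{1-k}\,A\bigr)=\lambda\bigl((B^{1/2}AB^{1/2})^{1-k}B^{2k-1}\bigr)$, take $A=\mathrm{diag}(4,1)$, $B=\begin{pmatrix}2&1\\1&1\end{pmatrix}$, $k=\tfrac34$, $p=1$: all the relevant determinants equal $\sqrt2$ (so in the $2\times2$ case only top eigenvalues matter, as your determinant remark anticipates), but $\lambda_1(A\,\widetilde{\natural}_{3/4}\,B)\approx 2.6807$ while $\lambda_1(B\,\widetilde{\natural}_{1/4}\,A)\approx 2.7345$. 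Worse, the same example refutes even the weaker one-sided insertion your plan would minimally need, namely the left relation of part (2) as printed: $\lambda_1\bigl((A^{1/6}BA^{1/6})^{3/4}\bigr)\approx 2.6957 > 2.6807\approx \lambda_1(A\,\widetilde{\natural}_{3/4}\,B)$, whereas the chain with the theorem's middle term checks out ($2.6957\le 2.7345\le 2.7468$, the last value being $\lambda_1((A^{1/2}B^{3}A^{1/2})^{1/4})$). So no completion of your antisymmetric-tensor comparison can exist: what the $t=\tfrac12$ substitution honestly proves for $\tfrac12\le k<1$ is the chain with $(B^p\,\widetilde{\natural}_{1-k}\,A^p)^{1/p}$ in the middle. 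Of the statement as printed, only the second relation survives; it has a direct proof via the $C_j$ technique, since $A^{p/2}B^{\frac{kp}{1-k}}A^{p/2}\le I$ gives $B^{p}\le A^{-\frac{(1-k)p}{k}}$ by L\"owner--Heinz (as $\tfrac{1-k}{k}\le 1$), hence $A^{-p}\#_k B^p\le A^{-2(1-k)p}$ and $A^p\,\widetilde{\natural}_k\,B^p\le I$ --- consistent with the paper attributing exactly this relation to \cite[Corollary 3.9]{JKT2025} --- while the first relation is false for $k>\tfrac12$.
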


 {The second log-majorization relation in Corollary \ref{cor: tilde} (2) has been already shown in \cite[Corollary 3.9]{JKT2025}.}

\section{Norm inequalities}

In this section, we discuss norm inequalities for spectral geometric means: Firstly, by Theorem~\ref{thm-lgF}, we have the following norm inequality:
\begin{thm} \label{thm-NF}
Let $A, B\in {\Bbb P}_n$ and $k,t\in (0,\frac{1}{2}]$. 
Then
\begin{equation} \label{eq:eF}
\UIN{\exp\left((1-2kt)\log A + 2kt \log B\right)} \leq \UIN{F_{k,t}(A^p,B^p)^{\frac{1}{p}}} \ 
\end{equation}
for all $p>0$ and any unitarily invariant norm $\UIN{\cdot}$. Moreover, for each $p>0$, there exists a sequence \textcolor{black}{$\{ p_m\}$ such that $p_m \downarrow 0$ as $m \to \infty$} and 
\[
\UIN{ F_{k,t}(A^{\textcolor{black}{p_m}}, B^{\textcolor{black}{p_m}})^{\frac{1}{\textcolor{black}{p_m}}}} \ \searrow \ \UIN{\exp\left((1-2kt)\log A+2kt \log B\right)} \quad \mbox{\textcolor{black}{as $m \to \infty$.}}
\]
\end{thm}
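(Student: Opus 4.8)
The plan is to obtain both parts as direct consequences of the log-majorization results proved earlier, translating log-majorization into norm inequalities via \eqref{eq:log-norm relation}. For the inequality \eqref{eq:eF}, I would simply invoke Theorem~\ref{thm-lgF}, which gives
\[
\exp\left((1-2kt)\log A + 2kt \log B\right) \prec_{(\log)} F_{k,t}(A^p,B^p)^{\frac{1}{p}}
\]
for every $p>0$; feeding this into the chain $\prec_{(\log)}\Rightarrow\prec_{w(\log)}\Rightarrow\UIN{\cdot}\le\UIN{\cdot}$ of \eqref{eq:log-norm relation} yields \eqref{eq:eF} for an arbitrary unitarily invariant norm.

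For the monotone convergence, I would build the sequence $\{p_m\}$ by geometric contraction. Writing $c:=\frac{2ktL}{1-2kt}\in(0,1]$, whose range is verified in Theorem~\ref{key1}, I set $\rho:=\min\{c,\tfrac12\}$, so that $0<\rho<1$ and $\rho\le c$, and define $p_m:=\rho^{m-1}p$; then $p_m\downarrow 0$. Since $p_{m+1}=\rho p_m\le c\,p_m$, Theorem~\ref{thm-lmF} applies at each stage to give
\[
F_{k,t}(A^{p_{m+1}},B^{p_{m+1}})^{\frac{1}{p_{m+1}}}\ \prec_{(\log)}\ F_{k,t}(A^{p_m},B^{p_m})^{\frac{1}{p_m}},
\]
whence, again by \eqref{eq:log-norm relation}, the numerical sequence $\UIN{F_{k,t}(A^{p_m},B^{p_m})^{1/p_m}}$ is nonincreasing in $m$.

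It then remains to identify the limit. Here I would apply the Lie--Trotter formula of Theorem~\ref{thm-LTF} to the self-adjoint pair $\log A,\log B$, using $A^{p}=\exp[p\log A]$ and $B^{p}=\exp[p\log B]$, to conclude that $F_{k,t}(A^{p_m},B^{p_m})^{1/p_m}\to \exp\left((1-2kt)\log A+2kt\log B\right)$ in operator norm as $m\to\infty$. Since all norms on $\Bbb M_n$ are equivalent, this convergence carries over to $\UIN{\cdot}$, so the decreasing sequence of norms converges to $\UIN{\exp\left((1-2kt)\log A+2kt\log B\right)}$; together with the monotonicity already obtained, this is exactly the asserted $\searrow$.

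The only point requiring care is the construction of the ratio $\rho$: at the corner $k=t=\tfrac12$ one has $c=1$, so contracting by the factor $c$ itself would leave $p_m$ constant and fail to reach $0$. Choosing $\rho<1$ with $\rho\le c$ resolves this while keeping each step within the hypothesis $q\le cp$ of Theorem~\ref{thm-lmF}; beyond this, the argument is a mechanical combination of Theorems~\ref{thm-lgF}, \ref{thm-lmF}, \ref{thm-LTF} and \eqref{eq:log-norm relation}.
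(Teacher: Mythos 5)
Your proposal is correct and follows essentially the same route as the paper's proof: the inequality \eqref{eq:eF} is obtained from the log-majorization of Theorem~\ref{thm-lgF} (which the paper re-derives on the spot from Theorems~\ref{thm-lmF} and \ref{thm-LTF}) together with \eqref{eq:log-norm relation}, and the monotone sequence is built by iterating an admissible contraction ratio and identifying the limit via the Lie--Trotter formula. One point in your favor: the paper's own recursion takes $p_m=\frac{2ktL}{1-2kt}\,p_{m-1}$ and asserts $p_m\downarrow 0$, but at the corner $k=t=\frac12$ one has $L=1$ and $\frac{2ktL}{1-2kt}=1$, so that sequence is constant and never reaches $0$; your choice $\rho=\min\bigl\{\frac{2ktL}{1-2kt},\frac12\bigr\}<1$, which still satisfies $\rho\le\frac{2ktL}{1-2kt}$ and hence keeps each step inside the hypothesis of Theorem~\ref{thm-lmF}, genuinely repairs this edge case that the paper's argument overlooks.
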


\begin{proof}
Put $L=1+2t-4kt$.\par
Assume that $k,t\in (0,\frac{1}{2}]$. 
For $0<q\leq \frac{2ktL}{1-2kt}p (\leq p)$, it follows from Theorem~\ref{thm-lmF}  {and \eqref{eq:log-norm relation}} that
\[
\UIN{F_{k,t}(A^q, B^q)^{\frac{1}{q}}}  \ \leq\ \UIN{F_{k,t}(A^p,  B^p)^{\frac{1}{p}}} 
\]
and as $q\to 0$, it follows from Theorem~\ref{thm-LTF} that
\[
\UIN{\exp\left((1-2kt)\log A+2kt\log B\right)} \leq \UIN{F_{k,t}(A^p, B^p)^{\frac{1}{p}}} \quad \mbox{for all  {$0< \frac{2ktL}{1-2kt}p$}}.
\]
Therefore, we have the desiblack inequality \eqref{eq:eF}.\par

For any $p>0$, put $p_0=p$ and $p_1=\frac{2ktL}{1-2kt} p_0$. Then $p_1 \leq p_0$ and  it follows from Theorem~\ref{thm-lmF}  {and \eqref{eq:log-norm relation}} that
\[
\UIN{F_{k,t}(A^{p_1}, B^{p_1})^{\frac{1}{p_1}}}  \ \leq\ \UIN{F_{k,t}(A^{p_0}, B^{p_0})^{\frac{1}{p_0}}}.
\]
Similarly, we put \textcolor{black}{$p_m=\frac{2ktL}{1-2kt} p_{m-1}$ for $m=1,2,\ldots$}. Then \textcolor{black}{$p_m \downarrow 0$ as $m \to \infty$}, and 
\[
\UIN{F_{k,t}(A^{\textcolor{black}{p_m}}, B^{\textcolor{black}{p_m}})^{\frac{1}{\textcolor{black}{p_m}}}} \ \searrow \ \UIN{\exp\left((1-2kt)\log A+2kt\log B\right)} \quad \mbox{\textcolor{black}{as $m \to \infty$.}}
\]
\end{proof}

\begin{rmk}
By Theorem~\ref{thm-NF}, we have the following norm inequality for the $(k,t)$-spectral geometric mean: Let $k,t\in (0,\frac{1}{2}]$. 
Then
\[
\UIN{A \#_{2kt} B} \leq \UIN{\exp\left((1-2kt)\log A+2kt \log B\right)} \leq \UIN{F_{k,t}(A,B)}.
\]
In the case of $t=\frac{1}{2}$,
\[
\UIN{A \#_k B} \leq \UIN{\exp\left((1-k)\log A+k \log B\right)} \leq \UIN{A \widetilde{\natural}_k B}.
\]
\end{rmk}

Gan-Tam showed in \cite[Corollary 3.10]{GT} that if $A,B\in {\Bbb P_{n}}$ and $t\in [0,1]$, then
\begin{equation} \label{eq:uen-1}
\UIN{\exp\left((1-t)\log A+t\log B\right)} \leq \UIN{(A^p\natural_t B^p)^{\frac{1}{p}}}\quad \mbox{for all $p>0$,}
\end{equation}
for any unitarily invariant norm $\UIN{\cdot}$. Moreover, 
\begin{equation} \label{eq:uen-2}
\UIN{(A^p \natural_t B^p)^{\frac{1}{p}}} \ \searrow \ \UIN{\exp\left((1-t)\log A+t\log B\right)}\quad \mbox{as $p\to 0$}.
\end{equation}
\medskip

There is a gap in the proof of the Ando-Hiai inequality \cite[Corollary 3.10]{GT} for the spectral geometric mean $\natural_t$, so it is unclear whether \eqref{eq:uen-2} is correct. However, \eqref{eq:uen-1} of the argument is correct.\par
 Thus, using Theorem~\ref{thm-NF}, we show the following results with a slight modification of results by Gan-Tam.

\begin{thm}\label{thm: unitarily invariant norm 1}
Let $A,B\in {\Bbb P}_n$ and  {$t\in (0,1)$}. Then
\begin{equation} \label{eq:1}
\UIN{\exp\left((1-t)\log A+t\log B\right)} \leq \UIN{(A^p \natural_t B^p)^{\frac{1}{p}}} \quad \mbox{for all $p>0$}
\end{equation}
for any unitarily invariant norm $\UIN{\cdot}$. 
Moreover, for each $p>0$, there exists a sequence \textcolor{black}{$\{ p_m\}$ such that $p_m \downarrow 0$ as $m\to \infty$} and
\begin{equation} \label{eq:2}
\UIN{(A^{\textcolor{black}{p_m}} \natural_t\ B^{\textcolor{black}{p_m}})^{\frac{1}{\textcolor{black}{p_m}}}} \ \searrow \  \UIN{\exp\left((1-t)\log A+t\log B\right)} \ \mbox{\textcolor{black}{as $m \to \infty$.}}
\end{equation}
\end{thm}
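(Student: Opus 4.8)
The plan is to obtain both assertions by specializing the results already proved for the $(k,t)$-spectral geometric mean to $k=\tfrac12$, where $F_{\frac12,t}(A,B)=A\natural_t B$, $L=1$, and $2kt=t$, so that the exponential mean on the left of \eqref{eq:1} is exactly $\exp((1-t)\log A+t\log B)$. The one genuinely new point is that the statement asks for the full range $t\in(0,1)$, whereas Theorems~\ref{thm-NF} and~\ref{thm-lgF} and Corollary~\ref{thm-ah} are phrased for $t\le\tfrac12$; the device that bridges the gap is the transposition identity $A\natural_t B=B\natural_{1-t}A$, together with the observation that $\exp((1-t)\log A+t\log B)$ is invariant under the simultaneous swap $(A,t)\leftrightarrow(B,1-t)$.

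First I would prove \eqref{eq:1}. For $t\in(0,\tfrac12]$ this is immediate from Theorem~\ref{thm-NF} with $k=\tfrac12$ (equivalently, from the log-majorization of Theorem~\ref{thm-lgF} followed by \eqref{eq:log-norm relation}). For $t\in(\tfrac12,1)$ I would set $s=1-t\in(0,\tfrac12)$, apply the previous case to the pair $(B,A)$ with parameter $s$, and then rewrite $B^p\natural_{s}A^p=A^p\natural_t B^p$ by transposition; since the left-hand side $\UIN{\exp((1-s)\log B+s\log A)}=\UIN{\exp((1-t)\log A+t\log B)}$ is unchanged, \eqref{eq:1} follows for every $t\in(0,1)$.

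Next I would establish the monotone convergence \eqref{eq:2}. The essential input is Corollary~\ref{thm-ah}, valid for all $t\in(0,1)$, which gives $(A^q\natural_t B^q)^{1/q}\prec_{(\log)}(A^p\natural_t B^p)^{1/p}$ whenever $0<q\le c_t\,p$, where $c_t:=\min\{\tfrac{1-t}{t},\tfrac{t}{1-t}\}\in(0,1]$; by \eqref{eq:log-norm relation} this yields the corresponding decrease of the unitarily invariant norm. Fixing $p>0$, I would put $p_0=p$ and $p_m=c_t\,p_{m-1}$. When $t\ne\tfrac12$ one has $c_t<1$, hence $p_m=c_t^m p\downarrow0$, and consecutive terms satisfy $p_m=c_t p_{m-1}$, so the norms $\UIN{(A^{p_m}\natural_t B^{p_m})^{1/p_m}}$ are nonincreasing in $m$. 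At the symmetric point $t=\tfrac12$ one has $c_t=1$, but there Corollary~\ref{thm-ah} (equivalently Theorem~\ref{thm-spg}) gives the norm monotonicity over the whole range $0<q\le p$, so any sequence $p_m\downarrow0$ serves. Combining this nonincreasing behaviour with the lower bound \eqref{eq:1} and the Lie--Trotter limit of Theorem~\ref{thm-LTF} with $k=\tfrac12$, namely $(A^{p_m}\natural_t B^{p_m})^{1/p_m}\to\exp((1-t)\log A+t\log B)$ in norm as $m\to\infty$, gives the decreasing convergence \eqref{eq:2}.

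The main obstacle is not computational but structural: every Ando--Hiai-type and log-majorization input is available only for $t\le\tfrac12$, so the real work lies in checking that transposition transports each of them to $t>\tfrac12$ while leaving the limiting exponential mean fixed, and in verifying that the contraction factor $c_t$ is strictly below $1$ off the symmetric point $t=\tfrac12$---so that the constructed $\{p_m\}$ genuinely decreases to $0$---with the borderline $t=\tfrac12$ absorbed by the stronger full-range monotonicity of the spectral geometric mean.
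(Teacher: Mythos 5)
Your proposal is correct and follows essentially the same route as the paper: inequality \eqref{eq:1} via the $k=\tfrac12$ log-majorization results together with \eqref{eq:log-norm relation}, and the monotone convergence \eqref{eq:2} by iterating the contraction factor $\min\{\tfrac{t}{1-t},\tfrac{1-t}{t}\}$ (exactly the sequence $p_m=\frac{2ktL}{1-2kt}p_{m-1}$ from Theorem~\ref{thm-NF} at $k=\tfrac12$), using transposition to transport the case $t\le\tfrac12$ to $t>\tfrac12$ and Theorem~\ref{thm-LTF} to identify the limit. Your explicit treatment of the borderline $t=\tfrac12$, where the factor equals $1$ and the full-range monotonicity of Corollary~\ref{thm-ah} is invoked instead, is a point the paper only records in the remark following the theorem, and is a welcome clarification.
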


\begin{proof}
\eqref{eq:1} follows from Corollary \ref{cor-na}  {and \eqref{eq:log-norm relation}}. 
In the case of $0<t\leq \frac{1}{2}$, if we put $k=\frac{1}{2}$ in Theorem~\ref{thm-NF}, then we have \eqref{eq:2}.\par
In the case of  {
$\frac{1}{2}\leq t< 1$}, since $0<1-t\leq \frac{1}{2}$ and the $(\frac{1}{2},t)$-spectral geometric mean satisfies the transposition property, we have \eqref{eq:2}.
\end{proof}

\begin{rmk}
In particular, if $t=\frac{1}{2}$, then it follows from Corollary~\ref{thm-ah}  {and \eqref{eq:log-norm relation}} that $\UIN{(A^p \natural B^p)^{\frac{1}{p}}}$ decrease to $\UIN{\exp\left(\frac{\log A+\log B}{2}\right)}$ as $p\to 0$. 
\end{rmk}


%
Gan--Tam \cite{GT} showed that
\begin{small}
\begin{equation}
\begin{split}
\UIN{A \sharp_t B} & \leq \UIN{\exp\left((1-t)\log A+t \log B\right)} \leq \UIN{\left( B^{\frac{tp}{2}}A^{(1-t)p}B^{\frac{pt}{2}} \right)^{\frac{1}{p}}} \\
& \leq \UIN{(A^p\natural_t B^p)^{\frac{1}{p}}}
\end{split}
\label{eq:GT}
\end{equation}
\end{small}
hold for $t\in [0,1]$ and $p>0$. 

By Corollary~\ref{cor-na}  {and \eqref{eq:log-norm relation}}, we show a refinement of \eqref{eq:GT} due to Gan-Tam:

\begin{thm} \label{thm-uab}
Let $A, B\in {\Bbb P}_{n}$.
\begin{itemize}
\item[(1)] If $0<t\leq \frac{1}{2}$, then
\begin{small}
\[
\UIN{\left(A^{\frac{(1-t)p}{2}}B^{pt}A^{\frac{(1-t)p}{2}}\right)^{\frac{1}{p}}} \leq \UIN{(A^{\frac{p}{2}}B^{\frac{pt}{1-t}}A^{\frac{p}{2}})^{\frac{1-t}{p}}}\leq \UIN{(A^{p} \na_t B^{p})^{\frac{1}{p}}} \leq \UIN{\left(A^{\frac{(1-t)p}{2t}}B^{p}A^{\frac{(1-t)p}{2t}}\right)^{\frac{t}{p}}}
\]
\end{small}
for all $p>0$.
\item[(2)] If  {$\frac{1}{2}\leq t< 1$}, then
\begin{small}
\[
\UIN{\left(A^{\frac{(1-t)p}{2}}B^{pt}A^{\frac{(1-t)p}{2}}\right)^{\frac{1}{p}}} \leq \UIN{(A^{\frac{(1-t)p}{2t}}B^{p}A^{\frac{(1-t)p}{2t}})^{\frac{t}{p}}} \leq \UIN{(A^{p} \na_t B^{p})^{\frac{1}{p}}} \leq \UIN{(A^{\frac{p}{2}}B^{\frac{pt}{1-t}}A^{\frac{p}{2}})^{\frac{1-t}{p}}}
\]
\end{small}
for all $p>0$.
\end{itemize}
\end{thm}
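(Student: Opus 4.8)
The plan is to obtain both parts as immediate consequences of the log-majorization chains already recorded in Corollary~\ref{cor-na}, combined with the standard implication \eqref{eq:log-norm relation} that $X \prec_{(\log)} Y$ forces $\UIN{X}\le\UIN{Y}$ for every unitarily invariant norm. In other words, the two norm-inequality chains in the statement are nothing but the norm shadows of the corresponding log-majorization chains, read link by link.

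First I would fix $p>0$ and, for the case $0<t\le\frac12$, extract from Corollary~\ref{cor-na}(1) the relevant tail of the chain, namely
\[
\left( A^{\frac{(1-t)p}{2}}B^{tp}A^{\frac{(1-t)p}{2}}\right)^{\frac{1}{p}} \prec_{(\log)} \left( A^{\frac{p}{2}}B^{\frac{tp}{1-t}}A^{\frac{p}{2}}\right)^{\frac{1-t}{p}} \prec_{(\log)} (A^p \na_t B^p)^{\frac{1}{p}} \prec_{(\log)} \left( A^{\frac{(1-t)p}{2t}}B^pA^{\frac{(1-t)p}{2t}}\right)^{\frac{t}{p}}.
\]
Each of these three links is precisely the relation whose norm version is claimed in part (1), so applying \eqref{eq:log-norm relation} to each link in turn yields the three inequalities of part (1) verbatim. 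For part (2) I would run the identical argument on the chain of Corollary~\ref{cor-na}(2), where the two inner terms $\bigl(A^{\frac{p}{2}}B^{\frac{tp}{1-t}}A^{\frac{p}{2}}\bigr)^{\frac{1-t}{p}}$ and $\bigl(A^{\frac{(1-t)p}{2t}}B^{p}A^{\frac{(1-t)p}{2t}}\bigr)^{\frac{t}{p}}$ trade places, reflecting the reversal of the sign of $\tfrac12-t$ as $t$ crosses $\tfrac12$.

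Because this final step is a pure translation of log-majorization into norm inequalities, I do not expect any genuine analytic obstacle here: the substantive content — the Grand Furuta inequality and the L\"owner--Heinz orderings driving Theorem~\ref{thm-lmiF} — has already been absorbed into Corollary~\ref{cor-na}. The only point deserving attention is clerical, namely verifying that the exponents printed in the norm statement coincide exactly with those in the log-majorization chains, and confirming that the hypotheses $0<t\le\frac12$ and $\frac12\le t<1$ are matched to the two respective cases of Corollary~\ref{cor-na} so that the middle two terms are ordered correctly. Once this bookkeeping is checked, the proof reduces to invoking \eqref{eq:log-norm relation} three times in each case.
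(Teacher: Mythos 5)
Your proposal is correct and is exactly the paper's argument: Theorem~\ref{thm-uab} is presented there as an immediate consequence of the log-majorization chains in Corollary~\ref{cor-na} together with \eqref{eq:log-norm relation}, with no further content needed. Your link-by-link reading of the two chains (and the swap of the middle terms between the cases $0<t\le\frac{1}{2}$ and $\frac{1}{2}\le t<1$) matches the paper verbatim.
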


\begin{rmk}
Replacing $A \na_t B$ by $A \widetilde{\natural}_k B$ in Theorem~\ref{thm-uab}, we have the same results.
\end{rmk}

\begin{pbm}
For any $A,B\in \mathcal{B}(H)_{++}$, is there any order relation between $A \na_t B$ and $A \widetilde{\natural}_k B$?
\end{pbm}

 {In \cite[Corollary 3.9]{JKT2025}, a partial answer is given  {when}  $k=t\in [\frac{1}{2}, 1]$.}

\section{Alternative means}
\subsection{Week log-majorization for alternative means}

Let $A\in \mathcal{B}(H)_{++}$ and $B\in \mathcal{B}(H)_{+}$.
In \cite{DFKC2026}, as a generalization of the spectral geometric mean, the {\it alternative mean} $A\hat{\sigma}_{f}B$ is defined as follows.
$$ A\hat{\sigma}_{f}B=f(A^{-1}\sharp B)Af(A^{-1}\sharp B), $$
where $f$ is a positive operator monotone function defined on  {$[0,\infty)$} satisfying $f(1)=1$. It interpolates spectral geometric mean $A\natural_{t}B$ (if $f(x)=x^{t}$) and the Wasserstein mean $A\lozenge_{t}B$ (if $f(x)=1-t +t x$). In \cite[Theorem 4.4]{GK2024}, a week log-majorization relation between spectral geometric and Wasserstein means is discussed. However the proof seems to be incorrect. In this section, we give a generalization of it. We denote the singular values of  {$A\in \mathbb{M}_{n}$} by $s_1(A)\geq s_2(A) \geq \cdots \geq s_n(A) \geq 0$.

\begin{thm}\label{thm:eigenvalue inequality}
Let  {$A, B\in \mathbb{M}_{n}$ be positive semidefinite} such that $A$ is invertible, and let $f,g$ be  {non-negative} operator monotone functions defined on $ {[}0,\infty)$ satisfying $f(1)=g(1)=1$. If $f(x)\leq g(x)$ holds for all  {$x\geq 0$}, then 
$$ s_{j}(A\hat{\sigma}_{f}B)\leq s_{j}(A\hat{\sigma}_{g}B) $$
holds for all $j=1,2,...,n$. 
\end{thm}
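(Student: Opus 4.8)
The plan is to convert the singular value inequality into a L\"owner order inequality between two positive semidefinite matrices and then apply Weyl's monotonicity principle. Write $M:=A^{-1}\sharp B$, which is positive semidefinite because $A$ is positive definite and $B\geq 0$. Since $f,g$ are non-negative on $[0,\infty)$ and $M\geq 0$, the matrices $f(M)$ and $g(M)$ are positive semidefinite, and hence $A\hat{\sigma}_{f}B=f(M)Af(M)$ and $A\hat{\sigma}_{g}B=g(M)Ag(M)$ are positive semidefinite as well. Their singular values therefore coincide with their eigenvalues, so it suffices to prove $\lambda_{j}(f(M)Af(M))\leq \lambda_{j}(g(M)Ag(M))$ for all $j$.

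First I would remove the outer sandwich via the standard commutation of eigenvalues. Setting $Y:=f(M)A^{\frac12}$, one has $f(M)Af(M)=YY^{*}$ and $A^{\frac12}f(M)^{2}A^{\frac12}=Y^{*}Y$; since $YY^{*}$ and $Y^{*}Y$ share the same eigenvalues (with multiplicity, as both are $n\times n$), it follows that $\lambda_{j}(f(M)Af(M))=\lambda_{j}(A^{\frac12}f(M)^{2}A^{\frac12})$ for every $j$, and likewise for $g$. This reduces the claim to comparing $\lambda_{j}(A^{\frac12}f(M)^{2}A^{\frac12})$ with $\lambda_{j}(A^{\frac12}g(M)^{2}A^{\frac12})$.

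The decisive step is the operator inequality $f(M)^{2}\leq g(M)^{2}$. This is pure functional calculus: from $0\leq f(x)\leq g(x)$ on $[0,\infty)$ one gets the scalar inequality $f(x)^{2}\leq g(x)^{2}$, and applying it to $M\geq 0$ yields $f(M)^{2}=(f^{2})(M)\leq (g^{2})(M)=g(M)^{2}$. I want to emphasize that the operator monotonicity of $f$ and $g$ is not used here; only the pointwise domination $f\leq g$ together with $f,g\geq 0$ matters. The one point requiring care is that squaring does not respect the L\"owner order in general, but here $f(M)$ and $g(M)$ are functions of the \emph{same} matrix $M$, so the inequality descends to the scalar level. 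Congruence by $A^{\frac12}$ preserves order, whence $A^{\frac12}f(M)^{2}A^{\frac12}\leq A^{\frac12}g(M)^{2}A^{\frac12}$.

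Finally, Weyl's monotonicity theorem --- if $P\leq Q$ are Hermitian then $\lambda_{j}(P)\leq \lambda_{j}(Q)$ for all $j$ --- gives $\lambda_{j}(A^{\frac12}f(M)^{2}A^{\frac12})\leq \lambda_{j}(A^{\frac12}g(M)^{2}A^{\frac12})$, and chaining this with the equalities from the commutation step produces $s_{j}(A\hat{\sigma}_{f}B)\leq s_{j}(A\hat{\sigma}_{g}B)$ for all $j=1,\dots,n$. I do not anticipate a serious obstacle; the only subtlety is the eigenvalue bookkeeping when $B$ is merely positive semidefinite, so that $M$ and both means may be singular, but since every matrix in the argument is positive semidefinite the identification of ordered eigenvalues with singular values is unaffected.
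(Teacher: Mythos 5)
Your proof is correct and takes essentially the same route as the paper's: both pass from $f(M)Af(M)$ to $A^{\frac12}f(M)^{2}A^{\frac12}$ via the unitary equivalence of $YY^{*}$ and $Y^{*}Y$ (the paper phrases this through polar-decomposition unitaries $V_{f},V_{g}$), then apply the scalar inequality $f^{2}\leq g^{2}$ under the functional calculus of the single matrix $M=A^{-1}\sharp B$, and conclude by Weyl's monotonicity. Your remark that the operator monotonicity of $f$ and $g$ plays no role in this argument is accurate and consistent with the paper's proof.
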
 

\begin{proof}
It is enough to show that there exists a unitary matrix $U$ such that 
$$  A\hat{\sigma}_{f}B  \leq  U^{*} A\hat{\sigma}_{g}B U. $$
From the definition of alternative mean, there exists unitary matrices $V_{f}$ and $V_{g}$ such that 
$$ A\hat{\sigma}_{f}B=V^{*}_{f}A^{\frac{1}{2}}f(A^{-1}\sharp B)^{2} A^{\frac{1}{2}}V_{f}$$
and 
$$ A\hat{\sigma}_{g}B=V^{*}_{g}A^{\frac{1}{2}}g(A^{-1}\sharp B)^2 A^{\frac{1}{2}}V_{g}.$$
By the assumption $f(x)\leq g(x)$ for all  {$x\geq 0$}, we have $ f(x)^{2}\leq g(x)^{2}$, and hence 
\begin{align*}
A\hat{\sigma}_{f}B
& =
V^{*}_{f}A^{\frac{1}{2}}f(A^{-1}\sharp B)^{2} A^{\frac{1}{2}}V_{f}\\
& \leq 
V^{*}_{f}A^{\frac{1}{2}}g(A^{-1}\sharp B)^{2} A^{\frac{1}{2}}V_{f}\\
& =
V^{*}_{f}V_{g}A\hat{\sigma}_{g}BV^{*}_{g}V_{f}.
\end{align*}
\end{proof}

 {As a direct consequence of Theorem \ref{thm:eigenvalue inequality}, we obtain the following.}

\begin{cor}
Let  {$A, B\in \mathbb{M}_{n}$ be positive semidefinite} such that $A$ is invertible, and let $f,g$ be  {non-negative} operator monotone functions defined on $ {[}0,\infty)$. If  {$f(x)\leq g(x)$} holds for all  {$x\geq 0$}, then 
$$ A\hat{\sigma}_{f}B \prec_{w(\log)} A\hat{\sigma}_{g}B. $$
\end{cor}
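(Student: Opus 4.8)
The plan is to deduce this corollary directly from Theorem~\ref{thm:eigenvalue inequality}, since weak log-majorization is nothing but a statement about products of the top eigenvalues, and the theorem already controls each singular value individually. First I would recall that for the operators in question, the relevant quantities are governed by singular values: $A\hat{\sigma}_{f}B$ and $A\hat{\sigma}_{g}B$ need not be positive semidefinite in general (they are products of the form $f(A^{-1}\sharp B)\,A\,f(A^{-1}\sharp B)$, which are self-adjoint and in fact positive, but the cleanest path is to argue through the antisymmetric tensor powers using singular values, exactly as the eigenvalue inequality of the theorem is phrased).

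The key step is to pass from the entrywise singular value inequality $s_j(A\hat{\sigma}_{f}B)\le s_j(A\hat{\sigma}_{g}B)$ to the product inequality. I would invoke the multiplicative property of singular values under antisymmetric tensor powers: for any matrix $M\in\mathbb{M}_n$ one has
\[
\prod_{i=1}^{k} s_i(M) = s_1\!\left(C_k(M)\right) = \NORM{C_k(M)},
\]
the largest singular value of the $k$-fold antisymmetric tensor power $C_k(M)$. Since the map $M\mapsto C_k(M)$ is monotone on singular values in the sense that $s_j(M)\le s_j(N)$ for all $j$ forces $s_1(C_k(M))\le s_1(C_k(N))$, the conclusion
\[
\prod_{i=1}^{k} s_i(A\hat{\sigma}_{f}B)\ \le\ \prod_{i=1}^{k} s_i(A\hat{\sigma}_{g}B)\qquad(k=1,\dots,n)
\]
follows immediately from Theorem~\ref{thm:eigenvalue inequality}. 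Because $A\hat{\sigma}_{f}B$ and $A\hat{\sigma}_{g}B$ are positive semidefinite, their singular values coincide with their eigenvalues, so the singular-value products are exactly the eigenvalue products appearing in the definition of $\prec_{w(\log)}$, and this is precisely the assertion $A\hat{\sigma}_{f}B \prec_{w(\log)} A\hat{\sigma}_{g}B$.

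The only genuine point requiring care—and what I expect to be the main obstacle—is the monotonicity of the top singular value under $C_k(\cdot)$. This is not automatic from an abstract functor property; rather it rests on the fact that a coordinatewise inequality $s_j(M)\le s_j(N)$ for all $j$ yields $\prod_{i\in S}s_i(M)\le \prod_{i\in S}s_i(N)$ for the top index set $S=\{1,\dots,k\}$, which is elementary, combined with the Binet--Cauchy identity \eqref{eq:BC} that identifies this product with $s_1(C_k(\cdot))$. Since the theorem gives the full family of inequalities $s_j(A\hat{\sigma}_{f}B)\le s_j(A\hat{\sigma}_{g}B)$ for every $j$, this step is routine, and the whole corollary reduces to assembling these products. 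Thus the proof is essentially a one-line application of the theorem together with \eqref{eq:BC}.
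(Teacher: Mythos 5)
Your proposal is correct and follows the same route as the paper, which simply records the corollary as a direct consequence of Theorem~\ref{thm:eigenvalue inequality}: the entrywise inequalities $s_j(A\hat{\sigma}_{f}B)\le s_j(A\hat{\sigma}_{g}B)$, together with positivity (so singular values are eigenvalues), give the top-$k$ product inequalities defining $\prec_{w(\log)}$. Your detour through antisymmetric tensor powers and \eqref{eq:BC} is harmless but superfluous, since multiplying the nonnegative inequalities $s_i(A\hat{\sigma}_{f}B)\le s_i(A\hat{\sigma}_{g}B)$ over $i=1,\dots,k$ already yields the conclusion directly.
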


 {Moreover by putting $f(x)=x^t$ and $g(x)=1-t+tx$, we have the following corollary.}

\begin{cor}
Let  {$A, B\in \mathbb{M}_{n}$ be positive semidefinite} such that $A$ is invertible. Then 
$$ A\natural_{t}B \prec_{w(\log)} A\lozenge_{t}B $$
holds for all  {$t\in (0,1)$}.
\end{cor}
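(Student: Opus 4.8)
The plan is to obtain this corollary as a direct specialization of the preceding weak log-majorization result for alternative means. The whole strategy rests on recognizing both $A\natural_t B$ and $A\lozenge_t B$ as alternative means $A\hat{\sigma}_h B$ for suitable representing functions $h$, and then verifying the pointwise comparison between those functions. Thus the real content is already carried by Theorem~\ref{thm:eigenvalue inequality} and its corollary, and what remains is an identification together with one elementary scalar inequality.

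First I would recall, from the definition of the alternative mean, that $A\natural_t B = A\hat{\sigma}_f B$ with $f(x)=x^t$, and $A\lozenge_t B = A\hat{\sigma}_g B$ with $g(x)=1-t+tx$; both identifications were noted when the alternative mean was introduced. Next I would check that $f$ and $g$ satisfy the hypotheses needed to invoke the previous corollary, namely that they are non-negative operator monotone functions on $[0,\infty)$ with $f(1)=g(1)=1$. For $f(x)=x^t$ with $t\in(0,1)$ this is the classical example of an operator monotone function (L\"owner's theorem), and clearly $f(1)=1$; for the affine map $g(x)=1-t+tx$ operator monotonicity is trivial because its slope is non-negative, and $g(1)=1-t+t=1$.

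The one substantive point is the scalar comparison $f(x)\le g(x)$ for all $x\ge 0$, that is,
\[
x^{t}\le 1-t+tx \qquad (x\ge 0,\ t\in(0,1)).
\]
This is the weighted arithmetic--geometric mean inequality, which I would prove by concavity: the map $x\mapsto x^{t}$ is concave on $[0,\infty)$ for $t\in(0,1)$, and its tangent line at $x=1$ is exactly $1+t(x-1)=1-t+tx$; since a concave function lies below each of its tangent lines, the asserted inequality follows at once.

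Having established $f(x)\le g(x)$ for all $x\ge 0$ together with the operator monotonicity and the normalization $f(1)=g(1)=1$, I would simply apply the preceding corollary to conclude $A\hat{\sigma}_f B \prec_{w(\log)} A\hat{\sigma}_g B$, i.e.\ $A\natural_t B \prec_{w(\log)} A\lozenge_t B$. There is no genuine obstacle beyond the elementary scalar inequality; the heavy lifting was already done in Theorem~\ref{thm:eigenvalue inequality}, whose unitary-conjugation argument promotes the operator inequality $f(A^{-1}\sharp B)^2 \le g(A^{-1}\sharp B)^2$ to the desired domination of singular values.
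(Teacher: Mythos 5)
Your proposal is correct and matches the paper's own proof, which likewise obtains the corollary by applying the preceding weak log-majorization result for alternative means with $f(x)=x^{t}$ and $g(x)=1-t+tx$, the scalar comparison $x^{t}\le 1-t+tx$ being the standard weighted arithmetic--geometric mean inequality you verified. Your tangent-line argument simply spells out the elementary step the paper leaves implicit.
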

\par\medskip

\subsection{Ando--Hiai Property}
As in the case of the spectral geometric mean, 
some alternative means $\hat{\sigma}_f$ also satisfy the Ando--Hiai property: 
\begin{equation}\label{alternative Ando-Hiai}
A \hat{\sigma}_f B \le I 
\;\Rightarrow\;
A^q \hat{\sigma}_f B^q \le I,
\qquad (A,B>0)
\end{equation}
for some $q>0$. 
Without knowing the explicit form of the function $f$, it is difficult to determine the exact set of exponents $q$ for which the Ando--Hiai property holds.  
However, we can give a rough estimate on the possible range of such exponents as follows:

\begin{prp}
Let $f$ be a non-negative normalized operator monotone function on $[0,\infty)$. 
If $f$ is non-trivial (that is, $f(t) \neq 1$ and $f(t) \neq t$), then
\[
\{\, q>0 \mid
\hat{\sigma}_f \text{ has the Ando--Hiai property for } q \,\}
\subseteq (0,1].
\]
\end{prp}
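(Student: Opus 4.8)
The plan is to reduce the statement to a norm inequality via Proposition~\ref{prp-hom} and then to rule out every exponent $q>1$ by testing on the $2\times 2$ family of Proposition~\ref{prp: 2--by--2}. First I would record that $\hat{\sigma}_f$ meets the hypotheses (i)--(iii) of Proposition~\ref{prp-hom}. Positivity holds because a non-trivial non-negative operator monotone $f$ is real-analytic and strictly increasing on $(0,\infty)$, hence $f>0$ there, so $f(A^{-1}\sharp B)$ is invertible for $A,B>0$; norm continuity in the second variable is functional calculus; and joint homogeneity follows from the scale invariance $(\lambda A)^{-1}\sharp(\lambda B)=A^{-1}\sharp B$, which leaves $f(A^{-1}\sharp B)$ unchanged and pulls the single factor $\lambda$ out of the middle $A$. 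Consequently $\hat{\sigma}_f$ has the Ando--Hiai property for $q$ if and only if $\|A^q\hat{\sigma}_f B^q\|\le\|A\hat{\sigma}_f B\|^q$ for all $A>0$, $B\ge 0$; moreover this norm form is closed under products of exponents (compose the two inequalities), so if it holds for some $q_0>1$ it holds for every $q_0^{2^n}$.

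Next I would evaluate everything on $A_{x,y}$ and $B=\operatorname{diag}(1,0)$. Writing $A_{x,y}^{-1}=A_{1/(4x),\,1/(4y)}$ and applying Lemma~\ref{lem-axy} with $k=\tfrac12$ gives $A_{x,y}^{-1}\sharp B=(x+y)^{-1/2}B$, so with $c=(x+y)^{-1/2}$ one finds $f(A_{x,y}^{-1}\sharp B)=\operatorname{diag}(f(c),f(0))$ and hence an explicit $2\times 2$ expression for $A_{x,y}\hat{\sigma}_f B$; let $\psi(x,y):=\|A_{x,y}\hat{\sigma}_f B\|$ denote its top eigenvalue. Since $B^q=B$ and $A_{x,y}^q=A_{(2x)^q/2,\,(2y)^q/2}$, the norm inequality becomes $\psi\bigl((2x)^q/2,(2y)^q/2\bigr)\le\psi(x,y)^q$. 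Setting $x=1$ and letting $y\downarrow 0$ (the limiting matrix is rank one, so $\psi(u,0^{+})=u\bigl(f(u^{-1/2})^2+f(0)^2\bigr)$), I obtain the necessary condition
\[
2^{\,q-1}\bigl(f(2^{(1-q)/2})^2+f(0)^2\bigr)\ \le\ (1+f(0)^2)^q .
\]

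Finally I would analyze this inequality as $q\to\infty$, distinguishing two cases. If $f(0)>0$, then $f(0)<1$ (otherwise $f\equiv 1$), and since $f(2^{(1-q)/2})\to f(0)$ the left side is asymptotic to $2^q f(0)^2$ while the right side is $(1+f(0)^2)^q$ with $1+f(0)^2<2$; hence the left side dominates and the inequality fails for large $q$. If $f(0)=0$, the right side is $1$ and the left side equals $\bigl(f(u)/u\bigr)^2$ with $u=2^{(1-q)/2}\downarrow 0$; concavity of $f$ with $f(0)=0$ makes $f(u)/u$ nonincreasing with limit $f'(0^{+})\ge f(1)=1$, and non-triviality forces the strict bound $f'(0^{+})>1$, so again the left side exceeds $1$ for large $q$. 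In either case the displayed condition fails for all sufficiently large $q$, so $\hat{\sigma}_f$ lacks the Ando--Hiai property there; combined with the closure under $q\mapsto q^2$ from the first paragraph, this excludes every $q>1$ and yields the asserted inclusion.

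I expect the main obstacle to be the case $f(0)=0$: there the crude limit $y\downarrow 0$ only shows the left side tends to $f'(0^{+})^2$, so the whole argument hinges on the strict inequality $f'(0^{+})>1$, which must be extracted from operator monotonicity (concavity and real-analyticity of $f$) together with the non-triviality hypothesis $f\neq\mathrm{id}$.
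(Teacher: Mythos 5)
Your proof is correct and follows the paper's strategy almost step for step: the reduction to a norm inequality via Proposition~\ref{prp-hom} (after verifying (i)--(iii), exactly as the paper does), the iteration of exponents, the evaluation on the family $A_{x,y}$ with $B=\operatorname{diag}(1,0)$ from Proposition~\ref{prp: 2--by--2}, and the limit $x=1$, $y\downarrow 0$ all reproduce the paper's scalar test inequality $2^{q-1}\bigl(f(2^{(1-q)/2})^2+f(0)^2\bigr)\le(1+f(0)^2)^q$; your treatment of the case $f(0)>0$ (asymptotics along the iterated exponents tending to infinity, using $f(0)<1$) is the paper's argument in slightly different clothing. The one genuine divergence is the case $f(0)=0$: the paper disposes of it in a single step, from the $n=1$ instance $2^{q-1}f\bigl(1/\sqrt{2^{q-1}}\bigr)^2\le 1$, by observing that $t\mapsto\bigl(\sqrt{t}\,f(1/\sqrt{t})\bigr)^2$ is operator monotone (and non-constant by non-triviality, hence strictly increasing), which forces $2^{q-1}\le 1$ directly, with no iteration and no derivative bound. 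You instead iterate and let $q\to\infty$, which makes your branch hinge entirely on the strict inequality $f'(0^{+})>1$; that bound is true and your sketch is completable --- if $f'(0^{+})=1$, then $f(u)/u$ is nonincreasing with limit $1$ at $0^{+}$ and value $1$ at $u=1$, hence $f(u)=u$ on $(0,1]$, and real-analyticity of operator monotone functions propagates $f=\mathrm{id}$ to all of $(0,\infty)$, contradicting non-triviality --- but you should write this out explicitly, since concavity alone yields only $f'(0^{+})\ge 1$ and this is the load-bearing step of your version. Both routes are valid: the paper's is shorter and needs only monotonicity of the transposed function, while yours has the mild advantage of handling both cases by one uniform asymptotic scheme.
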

\begin{proof}
Assume that $\hat{\sigma}_f$ has the Ando--Hiai property for an exponent $q>0$. 
Then  {we have the following implications.}

\begin{align*}
A\hat{\sigma_{f}}B\leq I  
& \Longrightarrow 
A^{q}\hat{\sigma_{f}}B^{q}\leq I
 \Longrightarrow 
A^{q^2}\hat{\sigma_{f}}B^{q^2}\leq I\\
& \Longrightarrow 
\cdots 
 \Longrightarrow 
A^{q^n}\hat{\sigma_{f}}B^{q^n}\leq I,
\end{align*}
i.e, \color{black}the same property holds for every exponent $q^n$ $(n\ge 1)$. 
Since
\begin{align*}
(\l A) \hat{\sigma}_f (\l B) & = f((\l A)^{-1} \# (\l B))(\l A) f((\l A)^{-1} \# (\l  {B})) \\
& = f(A^{-1} \# B)(\l A)f(A^{-1} \# B) = \l (A \hat{\sigma}_f B)
\end{align*}
for $\l>0$, the binary operation $\hat{\sigma}_f$ is jointly homogeneous and hence we obtain the following norm inequality  {by Proposition \ref{prp-hom}}:
\begin{equation}\label{alternative Ando-Hiai2}
\|A^{q^n} \hat{\sigma}_f B^{q^n}\|
\;\le\;
\|A \hat{\sigma}_f B\|^{q^n},
\qquad (A>0,\, B\ge 0,\, n\ge 1).
\end{equation}

For the $2\times 2$ matrices $A(=A_{x,y})$ and $B$ consideblack in 
Proposition~\ref{prp: 2--by--2}, we put
\[
\alpha_{x,y} := f(1/\sqrt{x+y}), 
\qquad 
 {\beta := f(0),}
\]
and define
\[
\varphi_{f}(x,y) := \|A_{x,y} \hat{\sigma}_f B\|
=
\left\|
\begin{pmatrix} 
\alpha_{x,y} & 0 \\ 
0 & \beta 
\end{pmatrix} 
\begin{pmatrix} 
x+y & y-x \\ 
y-x & x+y 
\end{pmatrix} 
\begin{pmatrix} 
\alpha_{x,y}	 & 0 \\ 
0 & \beta 
\end{pmatrix} 
\right\|
\]
for $x,y>0$.  {We remark that $\beta<1$ because $f$ is a non-trivial operator monotone function.}

By the same argument as in Proposition~\ref{prp: 2--by--2}, the inequality \eqref{alternative Ando-Hiai2} yields the following estimate:
\begin{equation*}
\varphi_{f}\left(\frac{(2x)^{q^n}}{2}, \frac{(2y)^{q^n}}{2}\right) 
\le \varphi_{f}(x,y)^{q^n}, 
\qquad (n\ge 1).
\end{equation*}
Taking the limit $(x,y)\to(1,0)$, we obtain
\begin{equation}\label{tmp2}
2^{q^n-1} \left(f^2\left(\frac{1}{\sqrt{2^{q^n-1}}}\right) 
+ \beta^{2}\right)
\le (1+\beta^2)^{q^n}.
\end{equation}

We first consider the case $\beta>0$.  
Then \eqref{tmp2} can be rewritten as
\[
\frac{f^2\left(\frac{1}{\sqrt{2^{q^n-1}}}\right) + \beta^{2}}{2}
\le 
\left(\frac{1+\beta^2}{2}\right)^{q^n}.
\]
Suppose $q>1$. Then letting $n\to\infty$ yields
\[
\beta^{2}
= \lim_{n\to \infty} 
\frac{f^2\left(\frac{1}{\sqrt{2^{q^n-1}}}\right) + \beta^{2}}{2} 
\le 
\lim_{n\to \infty} 
\left(\frac{1+\beta^2}{2}\right)^{q^n} 
= 0 \quad  {\mbox{(by $\beta<1$),}}
\]
which contradicts the assumption $\beta >0$. 
Thus, we must have $q \le 1$.

In contrast, we next consider the remaining case $f(0)=0$.
Taking $n=1$ in \eqref{tmp2}, we have
\[
2^{q-1} f^2\left(\frac{1}{\sqrt{2^{q-1}}}\right) \le 1.
\]
Since the function 
$t \mapsto \bigl(\sqrt{t}f(1/\sqrt{t})\bigr)^2$
is an operator monotone function on $(0,\infty)$, 
it follows that $2^{q-1}\le 1$, 
that is, $q \le 1$.
\end{proof}





{\bf Acknowledgments.} 
The work of Y. Seo and S. Wada was supported in part by Japan Society for the Promotion of Science
(grant-in-aid for scientific research) [grant numbers (C)23K03249 and (C)23K03141, respectively]. T. Yamazaki was supported by the INOUE ENRYO Memorial Grant, Toyo University.



\end{document}